\def\SH{\mbox{\fontencoding{OT2}\selectfont\char88}}
\def\Z{{\mathbb Z}}
\def\SL{{\rm SL}}
\def\GL{{\rm GL}}
\def\PGL{{\rm PGL}}
\def\Stab{{\rm Stab}}
\def\Jac{{\rm Jac}}
\def\SO{{\rm SO}}
\def\P{{\mathbb P}}
\def\Disc{{\rm Disc}}
\def\Aut{{\rm Aut}}
\def\irr{{\rm irr}}
\def\red{{\rm red}}
\def\Inv{{\rm Inv}}
\def\Vol{{\rm Vol}}
\def\bigstab{{\rm bigstab}}
\def\R{{\mathbb R}}
\def\F{{\mathbb F}}
\def\FF{{\mathcal F}}
\def\RR{{\mathcal R}}
\def\Q{{\mathbb Q}}
\def\pv{{V_\Z^{+}}}
\def\nv{{V_\Z^{-}}}
\def\pnv{{V_\Z^{\pm}}}
\def\pr{{V_\R^{+}}}
\def\nr{{V_\R^{-}}}
\def\pnr{{V_\R^{\pm}}}
\def\pRR{{\mathcal R_X(\pl)}}
\def\nRR{{\mathcal R_X(\nl)}}
\def\pnRR{{\mathcal R_X(\pnl)}}
\def\pnRh{{\mathcal R_X(\pnl,h)}}
\def\pnRhl{{\mathcal R_X(h\cdot L)}}
\def\pnRh{{\mathcal R_X(h\cdot\pnl)}}
\def\pRh{{\mathcal R_X(h\cdot\pl)}}
\def\nRh{{\mathcal R_X(h\cdot\nl)}}
\def\rrpx{{R_V^{+}(X)}}
\def\pl{{L^{+}}}
\def\nl{{L^{-}}}
\def\pnl{{L^{\pm}}}
\def\H{{\mathcal H}}
\def\J{{\mathcal J}}
\def\C{{\mathcal C}}
\def\W{{\mathcal W}}
\def\Z{{\mathbb Z}}
\def\P{{\mathbb P}}
\def\F{{\mathbb F}}
\def\Q{{\mathbb Q}}
\def\C{{\mathbb C}}
\def\H{{\mathcal H}}
\newtheorem{theorem}{Theorem}
\newtheorem{corollary}[theorem]{Corollary}
\newtheorem{lemma}[theorem]{Lemma}
\newtheorem{proposition}[theorem]{Proposition}
\newenvironment{proof}{\noindent {\bf Proof:}}{$\Box$ \vspace{2 ex}}
\title{Ternary cubic forms having bounded invariants, and the existence of a
positive proportion of elliptic curves having rank 0}
\author{Manjul Bhargava and Arul Shankar}
\begin{document}
\maketitle


\section{Introduction}

Any elliptic curve $E$ over $\Q$ is isomorphic to a unique curve of
the form $E_{A,B}:y^2=x^3+Ax+B$, where $A,B \in \Z$ and for all primes
$p$:\, $p^6 \nmid B$ whenever $p^4 \mid A$. The (naive) {\it height}
$H(E_{A,B})$ of the elliptic curve $E=E_{A,B}$ is then defined by
$$H(E_{A,B}):= \max\{4|A^3|,27B^2\}.$$

In a previous paper~\cite{BS}, we showed that 
the average rank of all elliptic curves, when ordered by height, is
finite.  This was accomplished by proving that the average size of the
2-Selmer group of elliptic curves, when ordered by height, is
exactly 3; it then followed from the latter result that (the limsup
of) the average
rank of all elliptic curves is bounded above by 1.5.

In this article, we prove an analogous result for the average size of
the 3-Selmer group: 


\begin{theorem}\label{mainellip}
When all elliptic curves $E/\Q$ are ordered by height,
the average size of the $3$-Selmer group $S_{3}(E)$ is $4$.
\end{theorem}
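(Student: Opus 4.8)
The plan is to parametrize elements of $S_3(E)$ by integral ternary cubic forms and then count the latter by the geometry of numbers, in direct analogy with the treatment of the $2$-Selmer group via binary quartic forms in~\cite{BS}.

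\textbf{Step 1 (orbit parametrization).} By the classical theory of $3$-descent --- in the explicit form worked out by Cremona--Fisher--Stoll, going back to classical invariant theory --- elements of $H^1(\Q,E[3])$ correspond to pairs $(C,\iota)$, where $C/\Q$ is a genus-one curve and $\iota\colon \Jac(C)\xrightarrow{\;\sim\;} E$; each such $C$ carries a canonical $\Q$-rational divisor class of degree $3$ and hence embeds as a plane cubic $\{f=0\}\subset\P^2$, the form $f$ being determined up to $\PGL_3(\Q)$-equivalence and scaling. Under this correspondence the $\SL_3$-invariants of $f$ (the classical Aronhold invariants, of degrees $4$ and $6$ in the coefficients of $f$) recover, after normalization, the quantities $A,B$ with $E=E_{A,B}$; the trivial class corresponds to $E$ itself embedded by $|3\cdot O|$, and $S_3(E)$ is exactly the set of classes whose plane cubic is everywhere locally soluble. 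By the minimization and reduction results for ternary cubics, every class has an integral representative, so computing the average of $|S_3(E)|$ reduces to counting $\GL_3(\Z)$-orbits on the lattice $V_\Z$ of integral ternary cubic forms, weighted by the reciprocal of the relevant automorphism group, subject to (i) everywhere-local solubility and (ii) $H(E_{A,B})<X$, and then dividing by the number $\sim c\,X^{5/6}$ of elliptic curves of height below $X$.

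\textbf{Step 2 (counting all integral orbits).} Ignoring solubility for the moment, a set of representatives for the $\GL_3(\Z)$-orbits on $\{f\in V_\Z: H(\Jac(f))<X\}$ is cut out by intersecting $V_\Z$ with a region $\FF\cdot R_X$, where $\FF$ is a Siegel fundamental domain for $\GL_3(\Z)$ (acting on $\GL_3(\R)$ modulo scalars) and $R_X\subset V_\R$ is a bounded semialgebraic fundamental set for the $\SL_3(\R)$-action on real forms with $H(\Jac)<X$ (a finite union, one piece per real orbit type), with $\Vol(R_X)\asymp X^{5/6}$. To count $V_\Z\cap\FF\cdot R_X$ one averages over a compact set of translates of $\FF$, replacing the count in a single highly skewed region by a volume integral, and then applies a Davenport-type lattice-point estimate. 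This produces a main term $\Vol(\FF\cdot R_X)$, suitably normalized, equal to a constant times $X^{5/6}$.

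\textbf{Step 3 (the cusp --- the main obstacle).} The hard part is controlling the forms $f\in V_\Z\cap\FF\cdot R_X$ that lie in the cuspidal part of $\FF$, where $\GL_3(\R)$ runs off to infinity; because $\SL_3$ has two conjugacy classes of maximal parabolic subgroups, this cusp is combinatorially richer than in the binary-quartic case. One must show that, apart from $o(X^{5/6})$ exceptions, every such $f$ is \emph{reducible} over $\Q$ or \emph{distinguished} --- the cubic $f$ factors, or $\{f=0\}$ has a visible rational point (e.g.\ a rational flex), or a coefficient of $f$ vanishes --- the genuinely irreducible cuspidal forms being negligible because several of their coordinates are forced to be small on the relevant slices. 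Equally important, one must identify exactly which reducible and distinguished orbits occur: these account for the identity element of $S_3(E)$ --- contributing precisely $1$ to $|S_3(E)|$ for each $E$ --- together with orbits arising from rational $3$-torsion and from non-soluble $3$-coverings, all of which are either evaluated exactly or shown not to affect the average. Obtaining the requisite uniform bounds on lattice points in the cuspidal regions is the technical heart of the proof.

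\textbf{Step 4 (sieve and conclusion).} Finally impose everywhere-local solubility by inclusion--exclusion over primes: the weighted count of irreducible locally soluble orbits with $H(\Jac)<X$ equals the total count of Step 2 times $\prod_p\mu_p$, where $\mu_p$ is the $p$-adic density of soluble ternary cubics, computed by $p$-adic volume integrals and the associated mass formulae. Making the sieve rigorous requires a uniformity (``tail'') estimate bounding the integral forms whose discriminant is divisible by a large square; for ternary cubics the available estimate suffices for an upper bound but not, without further input, for a matching lower bound --- which is precisely why the theorem claims ``at most $4$''. Combining Steps 1--3 with this sieve gives
$$\sum_{H(E)<X}\#\big(S_3(E)\smallsetminus\{0\}\big)\;\le\;\big(3+o(1)\big)\cdot\#\{E:H(E)<X\},$$
the constant $3$ being the normalized product of local densities (matching the Poonen--Rains prediction $\sigma(3)-1=3$). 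Adding the contribution $1$ of the identity element for each $E$ yields $\sum_{H(E)<X}|S_3(E)|\le(4+o(1))\cdot\#\{E:H(E)<X\}$, i.e.\ the average size of $S_3(E)$ is at most $4$.
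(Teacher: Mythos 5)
Your proposal follows essentially the same route as the paper: parametrization of $3$-Selmer elements by locally soluble integral ternary cubics (via Cassels and minimization), counting strongly irreducible orbits by averaging over translates of a Siegel domain with a Davenport lattice-point estimate, showing the cuspidal region contributes only forms with a rational flex, and a one-sided sieve whose local mass ratio evaluates to $3$, giving the average of $\#S_3(E)-1$ at most $3$. The outline is correct and matches the paper's argument in all essential respects.
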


The above result is also seen to imply
the boundedness of the average rank of all elliptic curves.  Indeed,
for an elliptic curve $E$ over $\Q$, since
the 3-rank $r_3(S_3(E))$ of the $3$-Selmer group $S_3(E)$ of $E$
bounds the rank of $E$, and since 
$6r_3(E)-3\leq 3^{r_3(E)}=|S_3(E)|$, by taking averages we immediately
obtain the 
following improved
bound on the average rank of elliptic curves:


\begin{corollary}\label{corellip}
  When all elliptic curves over $\Q$ are ordered by height, their average
  $3$-Selmer rank is at most $1\frac{1}{6}$; thus their average rank is also
  at most $1\frac{1}{6}<1.17$.
\end{corollary}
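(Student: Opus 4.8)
The plan is to deduce Corollary~\ref{corellip} from Theorem~\ref{mainellip} by a short elementary averaging argument, entirely parallel to the way a bound of $1.5$ on the average rank was obtained from the $2$-Selmer average of $3$ in~\cite{BS}. For an elliptic curve $E/\Q$, write $r(E)$ for the Mordell--Weil rank and $r_3(E):=\dim_{\F_3}S_3(E)$ for the $3$-Selmer rank, so that $\#S_3(E)=3^{r_3(E)}$. The standard exact sequence
$$0\longrightarrow E(\Q)/3E(\Q)\longrightarrow S_3(E)\longrightarrow \SH(E)[3]\longrightarrow 0$$
gives $r(E)\le\dim_{\F_3}\bigl(E(\Q)/3E(\Q)\bigr)\le r_3(E)$, so it suffices to bound the average of $r_3(E)$; the corresponding bound on the average of $r(E)$ is then automatic.

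The key input is the elementary pointwise inequality $3^{n}\ge 6n-3$, valid for every integer $n\ge 0$. This is checked directly for $n\in\{0,1,2\}$ (with equality for $n=1$ and $n=2$), and for $n\ge 2$ it expresses the fact that the convex function $t\mapsto 3^{t}$ lies on or above the line $t\mapsto 6t-3$, namely the chord through $(1,3)$ and $(2,9)$ extended to the right. Hence $3^{r_3(E)}\ge 6\,r_3(E)-3$ for every~$E$, and averaging this over all elliptic curves $E/\Q$ ordered by height, then applying Theorem~\ref{mainellip} to the right-hand side, gives
$$6\cdot\mathrm{Avg}\bigl(r_3(E)\bigr)-3\ \le\ \mathrm{Avg}\bigl(3^{r_3(E)}\bigr)\ =\ \mathrm{Avg}\bigl(\#S_3(E)\bigr)\ \le\ 4 .$$
Therefore $\mathrm{Avg}\bigl(r_3(E)\bigr)\le\frac{7}{6}=1\frac{1}{6}$, and since $r(E)\le r_3(E)$ also $\mathrm{Avg}\bigl(r(E)\bigr)\le 1\frac{1}{6}<1.17$. (Here all averages are understood as the relevant $\limsup$'s over increasing ranges of the height, and the displayed pointwise inequalities pass to these limits unchanged.)

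There is no genuine obstacle in this deduction; the only point requiring any care is the choice of the optimal affine lower bound for $3^{n}$. The naive estimate $3^{n}\ge 2n+1$ would yield only average $3$-Selmer rank $\le\frac32$, whereas using the secant line through $n=1$ and $n=2$ --- which is legitimate precisely because $3^{r_3}$ takes only the discrete values $1,3,9,27,\dots$ --- is exactly what sharpens the constant to $\frac76$. One should also recall the standard fact that $\SH(E)[3]$ is indeed the cokernel in the displayed short exact sequence (so that $r\le r_3$), and note that Theorem~\ref{mainellip} bounds the size $\#S_3(E)=3^{r_3}$ rather than $r_3$ itself --- which is precisely why a convexity step is needed rather than a direct comparison.
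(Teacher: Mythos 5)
Your deduction is correct and is precisely the standard argument the paper leaves implicit when it says Theorem~\ref{mainellip} ``immediately yields'' the corollary: the convexity bound $3^{n}\ge 6n-3$ applied to $n=r_3(E)$, combined with the average bound of $4$ on $\#S_3(E)$, gives average $3$-Selmer rank at most $7/6$, and $r(E)\le\dim_{\F_3}E(\Q)/3E(\Q)\le r_3(E)$ transfers this to the rank. The matching constant $1\tfrac16=7/6$ confirms this is the intended route.
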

Theorem~\ref{mainellip} also yields the same bound of
$1\frac16$ on the average 3-rank of the Tate-Shafarevich group of all
elliptic curves, when ordered by height.

We will in fact prove a stronger version of Theorem \ref{mainellip},
namely:
\begin{theorem}\label{ellipcong}
  When elliptic curves $E:y^2=x^3+Ax+B$, in any family defined by finitely
  many congruence conditions on the coefficients $A$ and $B$, are ordered by
  height, the average size of the $3$-Selmer group $S_3(E)$ is~$4$.
\end{theorem}
Thus the average size of the 3-Selmer group remains 4 even when one
averages over any subset of elliptic curves defined by finitely many 
local conditions. We will actually prove Theorem~\ref{ellipcong} 
for an even larger class of families, including some that are defined by
certain natural {infinite} sets of local conditions (such as
the family of all {\it semistable} elliptic curves). 

Theorem~\ref{ellipcong}, and its above-mentioned extensions, allow us
to deduce a number of additional results on ranks that could not be deduced
solely through understanding the average size of the 2-Selmer group,
as in~\cite{BS}. First, by combining our counting techniques with the
remarkable recent results of Dokchitser--Dokchitser~\cite{DD} on the
parity of $p$-ranks of Selmer groups, we prove:

\begin{theorem}\label{algrank0}
  When all elliptic curves $E/\Q$ are ordered by height, a positive
  proportion of them have rank $0$.
\end{theorem}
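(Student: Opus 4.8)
The plan is to combine the bound on the average size of the $3$-Selmer group (Theorem~\ref{ellipcong}) with the parity results of Dokchitser--Dokchitser~\cite{DD}, together with the earlier bound on the average size of the $2$-Selmer group from~\cite{BS}. The key observation is that if an elliptic curve $E/\Q$ has $3$-Selmer rank $0$, then its rank is $0$; so it suffices to produce a positive proportion of curves with $\dim_{\F_3} S_3(E) = 0$. By Theorem~\ref{ellipcong}, the average of $3^{\dim_{\F_3}S_3(E)}$ is at most $4$, which already forces a positive proportion of curves to have $3$-Selmer rank $0$ or $1$ — but we must separately rule out the $3$-Selmer rank $1$ case for a positive proportion, and for this we invoke parity.

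First I would recall that~\cite{DD} expresses the parity of the $p$-Selmer rank of $E$ (for $p = 2, 3$) in terms of the root number $w(E)$, or more precisely in terms of purely local data: $\dim_{\F_p}S_p(E) \equiv (\text{something computable from the local behavior of }E) \pmod 2$ unconditionally. The upshot is that whether $S_3(E)$ has even or odd $\F_3$-dimension is governed by a finite set of local conditions on $E$ (a "congruence family"), and likewise for $S_2(E)$; moreover the $2$- and $3$-parities can be prescribed independently on a positive-density family. I would then restrict attention to a family $\FF$ of elliptic curves, defined by finitely many local conditions (hence of positive density), on which both $\dim_{\F_2}S_2(E)$ and $\dim_{\F_3}S_3(E)$ are forced to be \emph{even}. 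On such a family, Theorem~\ref{ellipcong} applies (it is stated for any family defined by finitely many local conditions), giving $\mathrm{avg}_{E \in \FF}\, 3^{\dim_{\F_3}S_3(E)} \le 4$; since every term in the average is $1$ or $\ge 9$, a short Markov-type argument shows that the proportion of $E \in \FF$ with $\dim_{\F_3}S_3(E) = 0$ is at least $(9-4)/(9-1) = 5/8$ of $\FF$.

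Next I would handle the $2$-part: on the same family $\FF$, the main theorem of~\cite{BS} gives $\mathrm{avg}_{E\in\FF}\, 2^{\dim_{\F_2}S_2(E)} \le 3$ (the bound of $3$ on the average size of the $2$-Selmer group holds over any such local family), and since $\dim_{\F_2}S_2(E)$ is even on $\FF$, each term is $1$ or $\ge 4$, so the proportion of $E \in \FF$ with $\dim_{\F_2}S_2(E) = 0$ is at least $(4-3)/(4-1) = 1/3$ of $\FF$. Now for $E \in \FF$ with \emph{both} $S_2(E)$ and $S_3(E)$ trivial, the rank is $0$ (rank $\le \dim_{\F_2}S_2(E) = 0$; indeed either Selmer condition alone suffices once we know its rank is $0$). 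By inclusion--exclusion, the proportion of such $E$ within $\FF$ is at least $5/8 + 1/3 - 1 = -1/24$, which is unfortunately not positive — so the naive combination fails, and this is exactly where the real work lies.

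The hard part, then, is to get a genuinely positive proportion, and I expect this is handled not by combining the two averaged bounds over a single family but by a more refined argument: one should use the \emph{independence} of the $2$-Selmer and $3$-Selmer behaviors. Concretely, I would further decompose $\FF$ according to the value of $\dim_{\F_2}S_2(E)$ and show that the $3$-Selmer average bound of $4$ continues to hold on each piece — this requires Theorem~\ref{ellipcong} in its strong form, applicable to families cut out by the ($2$-Selmer-relevant) local conditions, and the key input is that the local conditions governing $\dim_{\F_2}S_2(E)\bmod 2$ and those entering the $3$-Selmer count can be treated as independent, so that on the positive-density subfamily where $\dim_{\F_2}S_2(E) = 0$ we \emph{still} have $\mathrm{avg}\,3^{\dim_{\F_3}S_3(E)} \le 4$, hence a proportion $\ge 5/8$ there with $S_3(E) = 0$ as well. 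Intersecting with the even-$3$-parity condition and using $S_3(E) = 0 \Rightarrow \mathrm{rank} = 0$, this yields a positive proportion of rank-$0$ curves. I would expect the technical crux to be verifying that the families arising from the Dokchitser--Dokchitser parity conditions together with a fixed $2$-Selmer constraint are of the type to which Theorem~\ref{ellipcong} (and its infinite-local-condition extensions) applies, and that the relevant averages factor as needed.
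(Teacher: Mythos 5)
Your core counting step is right and matches the paper's: from an average $3$-Selmer size of at most $4$ (or $5$) on a set where the $3$-Selmer rank is even, a Markov-type argument forces a positive proportion of trivial $3$-Selmer groups, and $S_3(E)=0$ alone already implies rank $0$ (so your entire $2$-Selmer detour, which you correctly observe leads nowhere via inclusion--exclusion, is unnecessary). The genuine gap is in how you propose to control the parity. You assert that, by Dokchitser--Dokchitser, the parity of $\dim_{\F_3}S_3(E)$ is ``governed by a finite set of local conditions,'' so that one can cut out a positive-density congruence family on which the $3$-Selmer rank is forced to be even. This is false: the root number is $\omega(E)=-\prod_p\omega_p(E)$, a product of local root numbers over \emph{all} primes of bad reduction, and it is not constant on any family defined by finitely many local conditions (the set of bad primes and the split/non-split behavior there vary throughout any such family). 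So the family $\FF$ you want to restrict to does not exist as a congruence family, and Theorem~\ref{ellipcong} cannot be applied to it.

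The paper gets around exactly this obstruction by a twisting argument rather than by isolating an even-parity subfamily. It constructs an explicit acceptable family $F$ (semistable curves with good reduction at $2$ and $\Delta(E)/9$ squarefree prime to $3$) that is closed under the quadratic twist $E\mapsto E_{-1}$, and shows via the local root number formulas that $\omega(E_{-1})=-\omega(E)$ for every $E\in F$; since twisting by $-1$ preserves the height, exactly $50\%$ of $F$ has root number $+1$. Then Dokchitser--Dokchitser converts this into: exactly $50\%$ of $F$ has odd $3$-Selmer rank, hence $\#S_3(E)\ge 3$ for those curves. Feeding this into the average bound of $4$ over all of $F$ shows the even-parity half has average $3$-Selmer size at most $5$, and since even-parity sizes are $1$, $9$, or more, at least half of that half --- i.e.\ $25\%$ of $F$ --- has trivial $3$-Selmer group and rank $0$. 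If you replace your ``finite local conditions determine parity'' step with this twist-closure construction (and discard the $2$-Selmer material), your argument becomes the paper's proof.
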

In the case of rank 1, if we assume the finiteness of the
Tate-Shafarevich group, then we also have:

\begin{theorem}\label{algrank1}
Assume {\rm \SH(}$E)$ is finite for all $E$.  When all elliptic
curves $E/\Q $ are ordered by height, a positive proportion of them
have rank~$1$.
\end{theorem}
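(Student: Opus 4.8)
The plan is to reduce Theorem~\ref{algrank1}, using the finiteness hypothesis on $\SH$, to a purely quantitative statement about $3$-Selmer groups, and then to read that statement off from Theorem~\ref{mainellip}, the $p$-parity theorem of Dokchitser--Dokchitser~\cite{DD}, and the equidistribution of the root number. The reduction is as follows: suppose $E$ has no rational $3$-torsion point (such $E$ form a subfamily of density $0$ and may be discarded) and suppose $\#S_3(E)=3$, i.e.\ the $3$-Selmer rank $r_3(E)$ equals $1$. The exact sequence $0\to E(\Q)/3E(\Q)\to S_3(E)\to\SH(E)[3]\to 0$ then gives $\mathrm{rk}\,E(\Q)+\dim_{\F_3}\SH(E)[3]=1$; since $\SH(E)$ is finite, $\dim_{\F_3}\SH(E)[3]$ is even, hence $0$, and so $\mathrm{rk}\,E(\Q)=1$. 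It therefore suffices to show that a positive proportion of all elliptic curves, ordered by height, have $\#S_3(E)=3$.

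To exhibit such curves I would restrict to the subfamily $\FF^-$ of elliptic curves with root number $w(E)=-1$. When elliptic curves are ordered by height, each value $w(E)=\pm1$ occurs with density $\tfrac12$ --- this can be established by the same local-density computations that enter Theorem~\ref{ellipcong} (at each prime $p$ the local root number $w_p(E)$ depends only on $(A,B)$ modulo a bounded power of $p$), or cited from the literature --- so $\FF^-$ has density $\tfrac12$. By the $p$-parity theorem~\cite{DD}, the parity of $r_3(E)$ is forced by $w(E)$, so every $E\in\FF^-$ has $r_3(E)$ odd and hence $\#S_3(E)\in\{3,27,243,\dots\}$. One is thus reduced to showing that the minimal value $3$ is attained on a positive proportion of $\FF^-$.

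This is a short convexity argument once the average of $\#S_3(E)$ over $\FF^-$ is bounded. One expects, and it can be verified by the local-density analysis underlying Theorem~\ref{ellipcong} (applied to this admissible family of local conditions), that this conditional average is at most $4$; in any case, combining Theorem~\ref{mainellip} with the trivial inequality $\#S_3\ge 1$ on the complementary density-$\tfrac12$ family already bounds the average over $\FF^-$ by $7$, which is all that is needed. Either way, if $\beta$ denotes the proportion of $E\in\FF^-$ with $\#S_3(E)=3$, then all other $E\in\FF^-$ have $\#S_3(E)\ge27$, so $3\beta+27(1-\beta)$ is at most the conditional average, giving $\beta\ge\tfrac{23}{24}$ (resp.\ $\beta\ge\tfrac56$). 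Multiplying by the density $\tfrac12$ of $\FF^-$ and removing the density-zero set of curves with rational $3$-torsion, a positive proportion of all elliptic curves have $\#S_3(E)=3$ and no rational $3$-torsion, hence rank exactly $1$ by the reduction above.

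The arithmetic here is routine; the real content is imported from Theorems~\ref{mainellip} and~\ref{ellipcong} and from~\cite{DD}. For this deduction the only point requiring genuine care is the treatment of the root number: verifying that $\FF^-$ is one of the families to which the extension of Theorem~\ref{ellipcong} applies, and that $w(E)=\pm1$ is equidistributed in the height ordering (although, as indicated, the weaker bound $7$ coming from Theorem~\ref{mainellip} alone suffices, at the expense of a smaller final proportion). It should be emphasized that the finiteness of $\SH$ is indispensable to this method: without it, the argument yields only a positive proportion of curves with $3$-Selmer rank $1$ --- hence Mordell--Weil rank \emph{at most} $1$ --- which is precisely why Theorem~\ref{algrank0}, where $3$-Selmer rank $0$ forces rank $0$ unconditionally, carries no such hypothesis.
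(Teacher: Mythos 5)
Your overall strategy---combine the average bound on $\#S_3(E)$ with the Dokchitser--Dokchitser parity theorem and Cassels' theorem that a finite Tate--Shafarevich group has square order, then run a convexity argument on the odd-parity half of the family---is exactly the paper's, and your arithmetic ($3\beta+27(1-\beta)\le 7$, hence $\beta\ge 5/6$, hence $41.6\%$ of the family has $\#S_3(E)=3$) reproduces the paper's count in Theorem~\ref{pralrank}. The reduction from $\#S_3(E)=3$ to rank exactly $1$ is also handled as in the paper.

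The genuine gap is your treatment of the root number. You assert that equidistribution of $w(E)=\pm1$ over \emph{all} elliptic curves ordered by height ``can be established by the same local-density computations that enter Theorem~\ref{ellipcong}'' or ``cited from the literature.'' Neither is available. The global root number is $-\prod_p \omega_p(E)$, an infinite product: although each $\omega_p$ is a local condition, the distribution of the product is not determined by the densities of its factors (this is a parity obstruction of the same nature as the distribution of the M\"obius function), and equidistribution of the root number over the full family in the naive height ordering is an open problem---the paper states explicitly that the root-number hypothesis for the family of all curves ``remains unproved.'' The paper's substitute, and the one ingredient of Section 4.1 your proposal is missing, is the construction of an explicit positive-proportion \emph{acceptable} family $F$ (semistable curves with good reduction at $2$ and $\Delta(E)/9$ squarefree and prime to $3$) that is stable under the height-preserving quadratic twist $E\mapsto E_{-1}$, together with a local computation showing this involution reverses the root number for every $E\in F$; this forces exact $50\%$ equidistribution \emph{inside} $F$, after which your convexity argument (run inside $F$, where Theorem~\ref{ellipgen} gives average at most $4$) yields a positive proportion of rank-$1$ curves among all curves. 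As written, your argument proves Theorem~\ref{algrank1} only conditionally on root-number equidistribution for the full family.
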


Next, combining our counting arguments with the important recent work
of Skinner--Urban~\cite{SU} on the Iwasawa Main Conjectures for
$\GL_2$, we obtain:

\begin{theorem}\label{anrank0}
  When all elliptic curves $E/\Q$ are ordered by height, a positive
  proportion of them have {\em analytic} rank $0$; that is, a positive
  proportion of elliptic curves have nonvanishing {$L$}-function at
  $s=1$.
\end{theorem}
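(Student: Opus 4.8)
The plan is to derive Theorem~\ref{anrank0} by combining the upper bound on the average size of $S_3(E)$ from Theorem~\ref{ellipcong} with the analytic machinery of Skinner--Urban~\cite{SU}. The strategy mirrors the deduction of Theorem~\ref{algrank0} from Theorem~\ref{ellipcong} and~\cite{DD}, except that now the input used to force rank $0$ is a converse-type theorem coming from Iwasawa theory rather than a parity result. Concretely, Skinner--Urban prove (under mild hypotheses on $E$, such as having good ordinary or multiplicative reduction at a prime $p$, an irreducible and suitably ramified mod-$p$ Galois representation, etc.) that if the $p$-Selmer group $S_p(E)$ has corank $0$ --- equivalently $S_p(E)$ is finite of the expected ``trivial'' size --- then $L(E,1)\ne 0$. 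So it suffices to show that a positive proportion of elliptic curves, ordered by height, simultaneously (i) satisfy the technical hypotheses of~\cite{SU} for $p=3$, and (ii) have $S_3(E)$ as small as possible.

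First I would isolate a family $\mathcal{F}$ of elliptic curves cut out by finitely many congruence conditions (on the coefficients $A,B$) that guarantees the Skinner--Urban hypotheses hold for $p=3$: for instance, prescribing good ordinary reduction at $3$, prescribing a prime $q$ of multiplicative reduction so that the mod-$3$ representation is surjective and ramified appropriately, and any other needed local conditions --- all of which are nonempty local conditions and hence carve out a family of positive density among all elliptic curves. Crucially, by Theorem~\ref{ellipcong}, the average size of $S_3(E)$ over $\mathcal{F}$, ordered by height, is still at most $4$. Next I would run the standard averaging argument: since $|S_3(E)|$ is always a power of $3$ and is at least $1$, if the average is at most $4$ then by a Markov-type counting bound a positive proportion of curves in $\mathcal{F}$ have $|S_3(E)|$ equal to $1$ (the minimum). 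Indeed, if the proportion with $|S_3(E)|\ge 3$ were $\delta$, the average would be at least $1-\delta + 3\delta = 1+2\delta$, and more carefully accounting for the next size $9$ one still gets that the density of curves with $S_3(E)$ trivial is bounded below by an explicit positive constant. For such curves $S_3(E)$ has corank $0$, so~\cite{SU} applies and yields $L(E,1)\ne 0$, i.e.\ analytic rank $0$.

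Finally I would assemble the pieces: the set of curves in $\mathcal{F}$ with $|S_3(E)|=1$ has positive density inside $\mathcal{F}$, and $\mathcal{F}$ itself has positive density inside the set of all elliptic curves ordered by height; hence this set has positive density among all elliptic curves, and every curve in it has $L(E,1)\ne 0$, proving Theorem~\ref{anrank0}. (One should note that an elliptic curve of analytic rank $0$ automatically has algebraic rank $0$ and finite \SH\ by Kolyvagin's theorem, so Theorem~\ref{anrank0} in fact refines Theorem~\ref{algrank0} on the subfamily where the Skinner--Urban hypotheses are met.)

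The main obstacle I anticipate is verifying the precise hypotheses of the Skinner--Urban theorem on a positive-density family --- in particular ensuring that the relevant mod-$3$ and $3$-adic Galois representations have the required big image, ramification, and reduction properties, and that these are all genuinely local (or manageable) conditions so that Theorem~\ref{ellipcong} can be invoked on the resulting family. A secondary technical point is checking that ``$S_3(E)$ trivial'' really does match the corank-$0$ input that~\cite{SU} requires (as opposed to some Selmer group twisted by a character or defined with slightly different local conditions), and quantifying the positive proportion carefully enough that the Markov-type bound on the tail of the distribution of $|S_3(E)|$ genuinely produces a positive density of curves attaining the minimum.
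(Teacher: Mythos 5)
Your overall architecture (carve out a family satisfying the Skinner--Urban hypotheses at $p=3$, show a positive proportion of its members have trivial $3$-Selmer group, conclude $L(E,1)\ne 0$) matches the paper's, but the central step --- extracting a positive proportion of curves with trivial $S_3(E)$ from the bound ``average of $\#S_3(E)$ is at most $4$'' --- is carried out by an argument that does not work. Your Markov-type computation runs in the wrong direction: if $\delta$ is the proportion of curves with $\#S_3(E)\ge 3$, the inequality $1+2\delta\le 4$ gives only $\delta\le 3/2$, which is vacuous. Indeed, an average of at most $4$ is perfectly consistent with \emph{every} curve having $\#S_3(E)=3$ (average exactly $3$) and hence with \emph{no} curve having trivial $3$-Selmer group; accounting for the value $9$ does not repair this, since $\delta_3=1$, $\delta_9=0$ still satisfies $1+2\delta_3+8\delta_9\le 4$... well, $1+2= 3\le 4$. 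So the average bound alone cannot produce a positive proportion of trivial $3$-Selmer groups.

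The missing ingredient is the Dokchitser--Dokchitser parity theorem, which you explicitly set aside as unnecessary for the analytic-rank statement. The paper uses it essentially: one first constructs an acceptable family $F$, closed under twisting by $-1$, in which exactly $50\%$ of curves have root number $+1$ (via an explicit discriminant/congruence construction forcing $\omega(E)=-\omega(E_{-1})$); by \cite{DD} exactly $50\%$ then have odd $3$-Selmer rank and so contribute at least $3$ each to the average, forcing the average over the even-rank half to be at most $5$; since an even-rank $3$-Selmer group has size $1$, $9$, or more, at least half of that half --- i.e.\ $25\%$ of $F$ --- must have trivial $S_3(E)$. Only then does Skinner--Urban (together with Duke's theorem that the mod-$3$ representation is irreducible for $100\%$ of curves, and the semistability and good ordinary reduction at $3$ built into $F$) yield $L(E,1)\ne 0$ for those curves. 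A secondary point: the Skinner--Urban hypotheses as used in the paper include semistability, which is an \emph{infinite} set of local conditions, so one needs the ``acceptable family'' extension of Theorem~\ref{ellipcong} rather than a family cut out by finitely many congruence conditions as you propose.
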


Applying Kolyvagin's Theorem, or noting that the elliptic curves of
analytic rank 0 that arise in Theorem~\ref{anrank0} form a subset of
those that are constructed in Theorem~\ref{algrank0}, we conclude:

\begin{corollary}\label{bsdcor}
A positive proportion of elliptic curves satisfy 
{\em BSD}.
\end{corollary}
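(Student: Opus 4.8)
The plan is to deduce Corollary~\ref{bsdcor} immediately from Theorem~\ref{anrank0} together with the classical theorem of Kolyvagin. First, Theorem~\ref{anrank0} produces a set of elliptic curves $E/\Q$ of positive density (when ordered by height) with $L(E,1)\neq 0$, i.e.\ of analytic rank $0$. As remarked just above the corollary, the analytic-rank-$0$ curves arising in Theorem~\ref{anrank0} form a subset of the rank-$0$ curves constructed in proving Theorem~\ref{algrank0}; hence each such $E$ has \emph{both} analytic rank $0$ and algebraic rank $0$, and the rank part of the Birch and Swinnerton-Dyer conjecture,
\[
  \mathrm{ord}_{s=1}L(E,s)=0=\mathrm{rank}_{\Z}E(\Q),
\]
holds for $E$ with nothing further to check.

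It then remains only to record the finiteness of the Tate--Shafarevich group (and, depending on how one chooses to formulate ``BSD'', the leading-coefficient formula). For this I would invoke Kolyvagin's theorem --- combined with the Gross--Zagier formula --- which guarantees that any $E/\Q$ with $L(E,1)\neq 0$ has $E(\Q)$ and \SH$(E)$ both finite. Thus every curve in the positive-density family of Theorem~\ref{anrank0} satisfies the rank conjecture and has finite Tate--Shafarevich group; since this family has positive density among all elliptic curves, a positive proportion of elliptic curves satisfy BSD.

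There is essentially no genuine obstacle here: the corollary is a formal consequence of Theorem~\ref{anrank0} and of results already in the literature. The one point that deserves a word of care is precisely what one means by ``satisfies BSD''. If one asks only for the equality of analytic and algebraic ranks together with the finiteness of \SH$(E)$, the argument above is complete and unconditional on these curves. If instead one insists on the full refined conjecture --- the formula for the leading Taylor coefficient of $L(E,s)$ at $s=1$ in terms of the real period, the regulator, the Tamagawa numbers, the torsion subgroup, and the order of \SH$(E)$ --- one would additionally cite the known cases of the $p$-part of the BSD formula in analytic rank $0$ obtained through Iwasawa theory; but since Theorem~\ref{anrank0} already gives positive density for the weaker (and standard) reading, Corollary~\ref{bsdcor} holds in any case.
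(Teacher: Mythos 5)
Your argument is correct and is essentially the paper's own: the curves produced in Theorem~\ref{anrank0} have both analytic and algebraic rank $0$ (being a subset of those from Theorem~\ref{algrank0}), so the rank part of BSD holds for them, and they form a positive proportion. Your additional appeal to Kolyvagin for finiteness of \SH$(E)$ is a harmless strengthening not needed for the paper's (rank-equality) reading of the corollary.
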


Our previous results on the average size of the 2-Selmer group were
obtained through counting integral binary quartic forms, up to
$\GL_2(\Z)$-equivalence, having bounded invariants.  The connection
with elliptic curves is that the process of 2-descent has a classical
interpretation in terms of rational binary quartic forms; indeed, this
connection was behind the beautiful computations of Birch and
Swinnerton-Dyer in~\cite{BSD}.  The process of 2-descent through the
use of binary quartic forms, as in Cremona's remarkable {\tt mwrank}
program, remains the fastest method in general for computing ranks of
elliptic curves.

In order to prove an analogous result for the average size of
3-Selmer groups, we apply our counting techniques in~\cite{BS},
appropriately modified, to the space $V_\Z$ of {\bf integral ternary
  cubic forms}.  The group $\SL_3(\Z)$ acts naturally on $V_\Z$, and
the ring of polynomial invariants over $\C$ 
for this action turns out to have 
two independent generators, 
having degrees 4 and 6, which we denote by $I$ and $J$ respectively.

These invariants may be constructed as follows.  For a ternary
cubic form $f$, let $\H(f)$ denote the {\it Hessian} of $f$, i.e., the
determinant of the $3\times 3$ matrix of second order partial
derivatives of $f$:
\begin{equation}\label{hessdef}
\H(f(x,y,z)):= \left|\begin{array}{ccc}f_{xx} & f_{xy} & f_{xz}\\f_{xy} &
  f_{yy} & f_{yz}\\f_{xz} & f_{yz} & f_{zz}\end{array}\right|.
 \end{equation}
 Then $\H(f)$ is itself a ternary cubic form  and, moreover, it is an
 $\SL_3$-covariant of $f$, i.e., for $\gamma\in\SL_3$, we have
 $\H(\gamma\cdot f)=\gamma\cdot\H(f)$.  An easy computation gives
\begin{equation}\label{defij}
\H(\H(f))\,=\,12288 \,I(f)^2\cdot f\,+\,512 \,J(f)\cdot\H(f)
\end{equation}
for certain rational polynomials $I(f)$ and $J(f)$ in the
coefficients of $f$, having degrees 4
and 6 respectively; note that (\ref{defij}) 
uniquely determines $J(f)$, and also 
uniquely determines $I(f)$ up to sign.
The sign of $I(f)$ is fixed by the requirement that the {\it
  discriminant} $\Delta(f)$ of a ternary cubic form $f$ be expressible
in terms of $I(f)$ and $J(f)$ by the same formula as for binary
quartic forms, namely
\begin{equation}\label{discdef}
\Delta(f):=\Delta(I,J):=(4I(f)^3-J(f)^2)/27.
\end{equation}
These polynomials $I(f)$ and $J(f)$ are evidently
$\SL_3$-invariants, and in fact they generate the full ring of
polynomial invariants over $\C$.

Traditionally, the generators of the ring of invariants of the action
of $\SL_3$ on the space of ternary cubic forms have been denoted by
$S$ and $T$ (called the Aronhold invariants~\cite{A}), 
which are certain integer
multiples of $I$ and~$J$, respectively; explicitly, we have $S=16\cdot
I$ and $T=32\cdot J$. However, any ternary cubic form $f$ having
complex coefficients and nonzero discriminant is
$\SL_3(\C)$-equivalent to a ternary cubic form $E$ in Weierstrass
form. In previous work (see \cite[\S3]{BS}) we had defined invariants
$I(E)$ and $J(E)$ of such $E$, and the invariants $I(f)$ and $J(f)$ of
ternary cubic forms $f$ have been chosen to agree with those same
invariants of $E$.

Now, for ternary cubic forms over the integers, the general work of Borel
and Harish-Chandra~\cite{BH} implies that the number of equivalence
classes of integral ternary cubic forms, having any given fixed values
for these basic invariants $I$ and $J$ (so long as $I$ and $J$ are not
both equal to zero), is finite.  The question thus arises: how many
$\SL_3(\Z)$-classes of integral ternary cubic forms are there, on
average, having invariants $I,J$, as the pair $(I,J)$ varies?

To answer this question, we require a couple of definitions.  Let us
define the {\it height} of a ternary cubic form $f(x,y,z)$ by
\begin{equation}\label{eqhttc}
H(f):=H(I,J):=\max\{|I^3|,J^2/4\}.
\end{equation}
(As usual, the constant
factor $1/4$ on $J^2$ is present for convenience and is not of any
real importance.)  Thus $H(f)$ is a ``degree 12'' function in the
coefficients of $f$, in the sense that $H(\lambda f)=\lambda^{12}H(f)$ 
for any constant~$\lambda$.  We may then order all $\SL_3(\Z)$-classes of
ternary cubic forms $f$ by their height $H(f)$, and we may similarly order all
pairs $(I,J)$ of invariants by their height $H(I,J)$.

As with binary quartic forms, we wish to restrict ourselves to
counting ternary cubic forms that are irreducible in an appropriate
sense.  Being simply {\it irreducible}---i.e., not having a smaller
degree factor---is more a geometric condition rather than an
arithmetic one.  We wish to have a condition that implies that the
ternary cubic form is sufficiently ``generic'' over $\Q$.  The most
convenient notion (also for the applications) turns out to be what we
call strong irreducibility.

Let us say that an integral ternary cubic form $f$ is {\it
  strongly irreducible} if $f$ is irreducible and 
the common zero set of $f$ and its Hessian
$\H(f)$ in $\P^2$ (i.e., the set of {\it flexes} of $f$ in $\P^2$)
contains no rational points.  
We prove:

\begin{theorem}\label{bqcount}
  Let $h(I,J)$ denote the number of
  $\SL_3(\Z)$-equivalence classes of strongly irreducible ternary
  cubic forms having invariants  equal to $I$ and $J$. Then:
  
  \vspace{.02in}
\begin{itemize}
\item[{\rm (a)}] $\displaystyle{\sum_{\substack{\Delta(I,J)>0\\[.01in]H(I,J)<X}}h(I,J)
\,=\,
\frac{\,32\,}{45}\,\zeta(2)\zeta(3) X^{5/6}\,+\,o(X^{5/6});}$
\item[{\rm (b)}] $\displaystyle{\sum_{\substack{\Delta(I,J)<0\\[.01in]H(I,J)<X}}h(I,J)
\,=\;\!
\frac{128}{45}\;\!\zeta(2)\zeta(3) X^{5/6}\,+\,o(X^{5/6}).}$
\end{itemize}
\end{theorem}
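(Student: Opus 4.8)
The plan is to count $\SL_3(\Z)$-orbits on the space $V_\Z$ of integral ternary cubic forms with bounded invariants $(I,J)$ by the classical method of averaging over a fundamental domain, carried out in the style of \cite{BS}. Write $V_\R = \{$ternary cubic forms over $\R\}$, a $10$-dimensional real vector space on which $G=\SL_3(\R)$ acts, with invariants $I,J$ of degrees $4$ and $6$; the orbits of $G$ on $V_\R$ with $\Delta\neq 0$ break into finitely many pieces indexed by the sign of $\Delta$ (and possibly by a real-topological invariant of the associated plane cubic curve), and for each such piece one has a $G$-equivariant ``section'' realizing a given pair $(I,J)$. First I would fix a fundamental domain $\FF$ for the left action of $\SL_3(\Z)$ on $\SL_3(\R)$ (an explicit Siegel set), and a bounded set $R = R_V^{+}\sqcup R_V^{-}$ of representatives for the nonzero $G$-orbits on $V_\R$ of each sign of $\Delta$, scaled so that the point of $R$ with invariants $(I,J)$ has height $1$ when $H(I,J)=1$. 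Then $N^{\pm}(X)$, the number of strongly irreducible $\SL_3(\Z)$-orbits on $V_\Z$ with $0<\pm\Delta$ and $H<X$, is approximated by the number of integer points in the region $\FF\cdot R^{\pm}\cdot X^{1/12}$ (after the usual subtlety about multiply-counting: one integrates over $\FF$ and divides by the volume, i.e.\ one uses Bhargava's averaging trick, integrating $N(\FF g R^{\pm} X^{1/12}\cap V_\Z)$ over $g$ in a fixed compact subset of $G$).

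Next I would carry out the geometry-of-numbers count of lattice points in $\FF\cdot R^{\pm}\cdot X^{1/12}$. The fundamental domain is not compact — it has cusps going off to infinity in the torus direction $a = \mathrm{diag}(t_1,t_2,t_3)$ of $\SL_3$ — so the region is unbounded, and the heart of the argument is a cusp analysis: one shows that the ``main body'' of the region (where $a$ is bounded) contributes the stated volume, while the cuspidal regions contribute only lattice points corresponding to reducible forms, or more precisely to forms that are \emph{not} strongly irreducible, hence are not counted. This is exactly the place where the hypothesis of strong irreducibility is used, and where I expect the main obstacle to lie: one must partition the variables of a cubic form $f = \sum a_{ijk} x^i y^j z^k$ according to how the torus scales each monomial, identify the sub-collections of ``dangerous'' coordinates that are forced to vanish (or be small) in each cuspidal chamber, and show that whenever a bounded number of the leading coordinates vanish the form acquires a rational flex — so such $f$ are excluded — while the remaining cuspidal contributions are genuinely of lower order. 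This requires a careful case-by-case bookkeeping over the Weyl chambers / faces of the cusp, together with estimates (à la Davenport) for the number of lattice points in the relevant skewed boxes, and a separate argument that the locus of forms with a rational flex is negligible in each main-body slice; I would also need a uniform count to handle the sum over the unbounded range of $(I,J)$ and to show the error is $o(X^{5/6})$.

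Finally I would assemble the volume computation. Having reduced to the main body, $N^{\pm}(X) \sim \Vol(\FF\cdot R^{\pm}\cdot X^{1/12} \cap \{H<X\})/\Vol(\text{the averaging set})$, and by a change of variables $(g, v) \mapsto (g, I(v), J(v))$ with the appropriate Jacobian — which converts the Haar measure $dg\, dv$ on $G\times V_\R$ into $dg$ times a constant multiple of $dI\, dJ$ on each orbit piece — the count becomes $\Vol(\SL_3(\Z)\backslash\SL_3(\R)) \cdot \tau^{\pm}\cdot \int_{0<\pm\Delta(I,J),\,H(I,J)<X} dI\,dJ$, where $\tau^{\pm}$ is the number of bounded real orbit-representatives of that discriminant sign and the Jacobian factor is the source of the rational constant. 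Using $\Vol(\SL_3(\Z)\backslash\SL_3(\R)) = \zeta(2)\zeta(3)$ (suitably normalized) and $\int_{H(I,J)<X} dI\,dJ \asymp X^{1/3+1/2} = X^{5/6}$, one reads off the leading constants $\tfrac{32}{45}\zeta(2)\zeta(3)$ and $\tfrac{128}{45}\zeta(2)\zeta(3)$; the ratio $4$ between the two reflects the difference in the number of connected components (equivalently, the relevant real orbit count) in the $\Delta<0$ versus $\Delta>0$ cases, exactly as for binary quartic forms in \cite{BS}. The bulk of the work is therefore the cusp/strong-irreducibility estimate in the middle paragraph; the volume bookkeeping, while lengthy, is routine once the cutoff is in place.
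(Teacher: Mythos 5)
Your overall strategy is the one the paper uses: average the lattice-point count over translates $\FF g L$ of a fundamental domain, cut off the cusp using strong irreducibility, apply Davenport's lemma in the main body, and convert the volume to an integral against $dg\,dI\,dJ$ by a Jacobian change of variables. However, your final assembly step contains a concrete error that would give the wrong constants if executed as written. Since every real plane cubic with $\Delta\neq 0$ has a real flex (the nine flexes come in conjugate pairs, so an odd number are real), every such form is equivalent over $\R$ to a Weierstrass form; hence for each $(I,J)$ with $\Delta(I,J)\neq 0$ there is exactly \emph{one} $\GL_3^+(\R)$-orbit, regardless of the sign of $\Delta$. So your factor $\tau^{\pm}$, the ``number of bounded real orbit-representatives of that discriminant sign,'' equals $1$ in both cases, and the ratio $4$ between parts (a) and (b) has nothing to do with real connected components (unlike the binary quartic case you cite): it is entirely the ratio $\int_{\Delta<0,\,H<X}dI\,dJ\,/\int_{\Delta>0,\,H<X}dI\,dJ=(32/5)/(8/5)$. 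What you must track instead, and do not mention, is that the stabilizer of such a form in $\GL_3^+(\R)$ has order $3$ (the real $3$-torsion of its Jacobian), so $\FF L$ is a $3$-fold cover of a fundamental set; the resulting factor $1/3$, together with a proof that forms whose Jacobian has a nontrivial rational $3$-torsion point (hence possibly nontrivial $\SL_3(\Z)$-stabilizer) are negligible, is what produces $32/45$ and $128/45$.

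On the cusp analysis, the situation is much simpler than the chamber-by-chamber bookkeeping you anticipate: in Siegel coordinates $a(s_1,s_2)$, if $s_1$ is large the $x^3$, $x^2y$, $x^2z$ coefficients of any integral form in the region must vanish, and if $s_2$ is large then $x^3$, $x^2y$, $xy^2$ must vanish; in either case $[1:0:0]$ is a rational flex, so \emph{no} strongly irreducible integral form lies in the cusp at all. The genuinely delicate residual point is the one you mention only in passing: showing that main-body forms with a rational flex, or with rational $3$-torsion on the Jacobian, are negligible. The paper does this by exhibiting, for each prime $p\equiv 1\pmod 3$, a set of residue classes modulo $p$ of density $\gg 1/p$ (built from $ax^3+by^3+cz^3$ with $a,b,c$ in distinct cube classes) that avoid rational flexes, so that $\prod_p(1-s_p)$ tends to $0$; some such argument is needed to complete your sketch.
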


In order to obtain the average size of
$h(I,J)$, as $(I,J)$ varies, we first need to know which pairs $(I,J)$
can actually occur as the invariants of an integral ternary cubic
form.  For example, in the case of binary quadratic and binary cubic
forms, the answer is well-known: there is only one invariant---the
{\it discriminant}---and a number occurs as the discriminant of a
binary quadratic (resp.\ cubic) form if and only if it is congruent to
$0$ or $1$~(mod 4).

In the binary quartic case, we proved in~\cite{BS} that a similar
scenario occurs; namely, an $(I,J)\in\Z\times\Z$ is {\it
  eligible}---i.e., it occurs as the invariants of some integer binary
quartic form---if and only if it satisfies any one of a certain
specified finite set of congruence conditions modulo 27 (see
\cite[Theorem~1.7]{BS}).

It turns out that the invariants $(I,J)$ that can occur (i.e., are
{\it eligible}) for an integral
ternary cubic must also satisfy these same conditions modulo 27.
However, there is also now a strictly larger set of possibilities at the prime 2.
Indeed, the pairs
$(I,J)$ that occur for ternary cubic forms need
not even be integral, but 
rather lie in $\frac1{16}\Z\times\frac1{32}\Z$;
and the pairs $(I,J)$ in this set that actually occur are then defined
by certain congruence conditions modulo 64 on $16I$ and $32J$, in
addition to the same congruence conditions modulo~$27$ on $I$ and $J$
that occur for binary quartic forms.

In particular, the set of
integral pairs $(I,J)\in\Z\times\Z$ that occur as invariants for integral ternary
cubic forms is the same as the set of all pairs $(I,J)$ that occur for integral 
binary quartic forms!  
We prove:

\begin{theorem}\label{eligible}
  A pair $(I,J)$ occurs as the pair
  of invariants of an integral ternary cubic form if and only if 
  $(I,J)\in\frac1{16}\Z\times\frac1{32}\Z$, the pair
  $(16I,32J)$ satisfies one of the following congruence conditions
  modulo~$64$:
\begin{itemize}

\item[{\rm (a)}] $16I \equiv \,\,0 \pmod{16}$ and \,$32J \equiv \,0 \pmod{32},$
{\rm \,\,(f)\,} $16I\equiv 25 \pmod{64}$ and\, $32J \equiv \;3 \;\pmod{32},$

\item[{\rm (b)}] $16I\equiv \,\,0 \pmod{16}$ and \,$32J \equiv \,8 \pmod{32},$
{\rm \,\,(g)\,} $16I \equiv 33 \pmod{64}$ and\, $32J \equiv 15 \pmod{32},$

\item[{\rm (c)}] $16I \equiv \,\,1 \pmod{64}$ and \,$32J \equiv \!31 \!\pmod{32},$
{\rm \,\,(h)\!\,} $16I\equiv 41 \pmod{64}$ and\, $32J \equiv 11 \pmod{32},$

\item[{\rm (d)}] $16I\equiv \,\,9 \pmod{64}$ and \,$32J \equiv \!27 \!\pmod{32},$
{\rm \,\,\,(i)\,} $16I \equiv 49 \pmod{64}$ and\, $32J \equiv 23 \pmod{32},$

\item[{\rm (e)}] $16I \equiv 17 \!\pmod{64}$ and \,$32J \equiv \,7 \pmod{32},$
{\rm \,\,\,(j)\,} $16I\equiv 57 \pmod{64}$ and\, $32J \equiv 19 \pmod{32},$
\end{itemize}
and $(I,J)$ satisfies one of the following
  congruence conditions modulo $27$:
\begin{itemize}
\item[{\rm (a)}] $I \equiv 0 \pmod3$ and $J \equiv \;\,\,\,0\;\, \pmod{27},$

\item[{\rm (b)}] $I\equiv 1 \pmod9$ and $J \equiv \;\pm 2\, \pmod{27},$

\item[{\rm (c)}] $I\equiv 4 \pmod9$ and $J \equiv \pm 16 \pmod{27},$

\item[{\rm (d)}] $I\equiv 7 \pmod9$ and $J \equiv \;\pm 7\, \pmod{27}.$
\end{itemize}

\end{theorem}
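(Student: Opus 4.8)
The plan is to reduce the problem to a purely local one at each prime $p$, exploiting the fact that the invariants $I,J$ are polynomial and hence the condition "$(I,J)$ occurs as the invariants of some integral ternary cubic form over $\Z_p$" is what we must pin down for each $p$, after which the global statement follows by the Chinese Remainder Theorem together with a weak approximation / strong approximation argument for the $\SL_3$-action (or simply by noting that a ternary cubic form over $\Z$ can be constructed to match any finite collection of prescribed reductions). First I would observe that for all primes $p \neq 2,3$ the map from integral ternary cubic forms mod $p$ to pairs $(I,J)$ mod $p$ is surjective: this is because mod $p$ the ring of invariants is still generated by $I$ and $J$, the relevant variety is smooth of the expected dimension, and one checks surjectivity on $\F_p$-points by exhibiting explicit forms (e.g. diagonal forms $ax^3+by^3+cz^3$, whose $I$ and $J$ are explicit polynomials in $a,b,c$, already give a Zariski-dense image). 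Hence there is no constraint away from $2$ and $3$, and $(I,J)$ lands in $\Z[1/6]$-integers modified only possibly at $2$ and $3$.

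Next I would handle the prime $3$. Here the key input I would invoke is the analysis already carried out for binary quartic forms in~\cite[Theorem~1.7]{BS}: the congruence conditions mod $27$ listed in (a)--(d) of the theorem are exactly the image of integral binary quartics under $(I,J)$ in $\Z_3 \times \Z_3$. The claim is that ternary cubics give the \emph{same} $3$-adic image. One containment is immediate once one knows that every integral binary quartic form $g(x,y)$ arises, with the same $I,J$, as a specialization/restriction-type construction from a ternary cubic (for instance, the classical fact that a binary quartic and its associated "syzygetic" ternary cubic, or the cubic obtained via the standard $2$-descent correspondence, share the invariants $I,J$ up to normalization); this shows the ternary-cubic image at $3$ \emph{contains} the binary-quartic image. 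For the reverse containment one must show no \emph{new} pairs appear mod $27$: I would do this by a direct computation over $\Z/27\Z$, enumerating ternary cubic forms over $\Z/3\Z$ (lifted suitably) or, more efficiently, by using the $\SL_3(\Z_3)$-action to reduce any $f$ to a normal form with few parameters and checking that $(I(f),J(f)) \bmod 27$ always satisfies (a)--(d). This is finite but tedious.

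The prime $2$ is the main obstacle, and the genuinely new phenomenon of the theorem. Here I must show two things: (i) $I(f) \in \frac1{16}\Z_2$ and $J(f)\in\frac1{32}\Z_2$ always, with denominators actually occurring, and (ii) the precise list (a)--(j) mod $64$ on $(16I,32J)$. For (i), note that the formula~(\ref{defij}) has the large constant $12288 = 2^{12}\cdot 3$ in front of $I^2 f$ and $512 = 2^9$ in front of $J\cdot\H(f)$; reading off a single coefficient of the identity $\H(\H(f)) = 12288\,I^2 f + 512\, J\,\H(f)$ expresses $2^{12}I^2$ and $2^9 J$ as \emph{integer} polynomials in the coefficients of $f$, so $2^{12}I(f)^2\in\Z$ and $2^9 J(f)\in\Z$; a slightly finer look (using that $\H(f)$ has all coefficients divisible by appropriate powers of $2$ once $f$ is put into a suitable $\SL_3(\Z_2)$-normal form, or analyzing the $2$-adic valuations coefficient by coefficient) upgrades this to $16 I(f)\in\Z_2$ and $32 J(f)\in\Z_2$. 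For (ii), the strategy I would use is to classify ternary cubic forms over $\Z_2$ up to $\SL_3(\Z_2)$-equivalence finely enough that $(I,J) \bmod 64$ is determined on each class: one reduces $f$ modulo $2$, uses the (finitely many) orbits of cubic curves over $\F_2$ under $\SL_3(\F_2)$ as a starting point, Hensel-lifts and tracks the ambiguity, and for each resulting family writes $(16I,32J)$ as an explicit function of the remaining coefficients mod $64$, collecting the residues that appear. The list (a)--(j) is precisely the output of this enumeration. I expect the bookkeeping over $\Z/64\Z$ — there are many orbits and one must be careful that the $\SL_3$-reduction is valid $2$-adically, not merely mod $2$ — to be the most delicate and error-prone part, and it is here that one genuinely sees the extra residues beyond the binary-quartic case (which corresponds exactly to case (a), the integral pairs). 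Finally, to assemble the global statement from the local ones, given any target residues mod $64$ (at $2$), mod $27$ (at $3$), and arbitrary residues mod $p$ for the finitely many other primes needed, I choose local forms $f_p$ realizing them and, since the coefficients of a ternary cubic form are $10$ free integer parameters with no global obstruction, apply CRT to produce a single integral $f$ with $f\equiv f_p$ to the required precision at each relevant $p$; then $(I(f),J(f))$ has the prescribed residues, completing the "if" direction, while the "only if" direction is exactly the union of the local constraints established above. $\Box$
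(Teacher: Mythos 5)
Your strategy is genuinely different from the paper's, and it has a real gap in the existence direction. The paper does not do a prime-by-prime orbit analysis at all: it first shows (Proposition~\ref{thinttcjac}, using the result of Artin--Rodriguez-Villegas--Tate that the Jacobian of an integral plane cubic is given by a Weierstrass cubic \emph{over $\Z$} with the same invariants, and that $I=c_4/16$, $J=c_6/32$) that a pair $(I,J)$ is eligible for ternary cubics if and only if it is eligible for integral Weierstrass cubics, and then quotes Kraus's theorem, which characterizes exactly which pairs $(c_4,c_6)$ arise from an integral Weierstrass equation. The congruence lists in the theorem are just a translation of Kraus's conditions under $(c_4,c_6)=(16I,32J)$.

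The gap in your argument is the final assembly step. The theorem asserts that every pair $(I_0,J_0)$ satisfying the congruences \emph{occurs exactly} as the invariants of some $f\in V_\Z$. Your CRT construction chooses local forms $f_p$ and produces a global $f$ with $f\equiv f_p$ to finite precision at each relevant prime; this only guarantees that $(I(f),J(f))$ is \emph{congruent} to $(I_0,J_0)$ modulo $64\cdot 27$ (say), not that it equals $(I_0,J_0)$. Since $16I$ and $32J$ are fixed integer polynomials of degrees $4$ and $6$ in the ten coefficients, there is no reason a priori that every integer pair in an achievable congruence class is itself achievable, and bridging this requires a genuine construction --- precisely what Kraus supplies by writing down explicit $a_1,\dots,a_6$ for each admissible $(c_4,c_6)$. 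Without such a construction (or without importing Proposition~\ref{thinttcjac} to reduce to Weierstrass cubics, at which point you have essentially rederived the paper's proof), the ``if'' direction is not established. Two further soft spots: your derivation of $16I\in\Z$, $32J\in\Z$ from the identity $\H(\H(f))=12288\,I^2f+512\,J\H(f)$ does not work as stated, since solving that identity for $I^2$ and $J$ coefficient-by-coefficient introduces denominators from the resulting linear system (the clean integrality statement again comes from the integrality of the Jacobian's Weierstrass coefficients); and you must also rule out denominators of $I$ and $J$ at $3$, which your sketch of the prime $3$ does not address.
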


We note that these additional possibilities for the invariants
$(I,J)\in \frac1{16} \Z\times\frac1{32}\Z$ that arise for ternary
cubic forms also arise in the case of binary quartics, provided one
uses ``generalized binary quartics''; see \cite{CFS} or \cite{Fisher1} for
details on the construction and uses of these generalized quartics.

From Theorem~\ref{eligible}, we then conclude that the number of
eligible pairs $(I,J)\in\frac1{16}\Z\times\frac1{32}\Z$, with
$H(I,J)<X$, is asymptotically a certain constant times $X^{5/6}$. By
Theorem~\ref{bqcount}, the number of classes of strongly irreducible
ternary cubic forms, per eligible $(I,J)\in\frac1{16}
\Z\times\frac1{32}\Z$, is thus a constant on average.  We have:

\begin{theorem}\label{bqaverage}
  Let $h(I,J)$ denote the number of $\SL_3(\Z)$-equivalence
  classes of strongly irreducible integral ternary cubic forms having invariants 
  equal to $I$ and $J$. Then:
  
$${\rm (a)}\;\;\displaystyle\lim_{X\rightarrow\infty}
\displaystyle\frac{\displaystyle\sum_{\substack{\Delta(I,J)>0\\H(I,J)<X}}h(I,J)}
{\displaystyle\sum_{\substack{(I,J) \mbox{ {\rm \scriptsize{eligible}}
     }\\[.025in]\Delta(I,J)>0\\[.025in]H(I,J)<X}}1}\,=\,3\zeta(2)\zeta(3);
     \qquad\quad
     {\rm (b)}\;\;\displaystyle\lim_{X\rightarrow\infty}
\displaystyle\frac{\displaystyle\sum_{\substack{\Delta(I,J)<0\\H(I,J)<X}}h(I,J)}
{\displaystyle\sum_{\substack{(I,J) \mbox{ {\rm \scriptsize{eligible}}
     }\\[.025in]\Delta(I,J)<0\\[.025in]H(I,J)<X}}1}\,=\,3\zeta(2)\zeta(3).
     $$
\end{theorem}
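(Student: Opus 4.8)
The plan is to combine the two previous theorems in the excerpt. By Theorem~\ref{bqcount}, the numerator $\sum h(I,J)$ over the appropriate range of $(I,J)$ with $\Delta(I,J)>0$ (resp.\ $\Delta(I,J)<0$) and $H(I,J)<X$ is asymptotic to $\frac{32}{45}\zeta(2)\zeta(3)\,X^{5/6}$ (resp.\ $\frac{128}{45}\zeta(2)\zeta(3)\,X^{5/6}$), with an error term of $o(X^{5/6})$. So the entire content of Theorem~\ref{bqaverage} is a count of the denominators: the number of \emph{eligible} pairs $(I,J)$, as characterized by Theorem~\ref{eligible}, with $H(I,J)<X$ and with a prescribed sign of the discriminant $\Delta(I,J)=(4I^3-J^2)/27$. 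Dividing the two asymptotics yields $3\zeta(2)\zeta(3)$ precisely when the number of eligible pairs in each regime is $\sim \frac{32}{135}X^{5/6}$ (for $\Delta>0$) and $\sim\frac{128}{135}X^{5/6}$ (for $\Delta<0$).

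First I would count \emph{all} pairs $(I,J)\in\frac1{16}\Z\times\frac1{32}\Z$ with $H(I,J)<X$ and $\Delta(I,J)$ of a given sign, ignoring the congruence conditions. Writing $H(I,J)=\max\{|I|^3,J^2/4\}<X$ means $|I|<X^{1/3}$ and $|J|<2X^{1/2}$; since $I$ ranges over $\frac1{16}\Z$ and $J$ over $\frac1{32}\Z$, the total number of lattice points in the box is $\sim 16\cdot X^{1/3}\cdot 2\cdot 32\cdot 2X^{1/2}\cdot(\text{a geometric factor})$. More precisely, the region $R^{\pm}=\{(I,J)\in\R^2 : H(I,J)<1,\ \pm\Delta(I,J)>0\}$ has a finite area $\Vol(R^{\pm})$, and the homogeneity of $H$ (degree $12$) gives that the number of points of $\frac1{16}\Z\times\frac1{32}\Z$ in $X\cdot R^{\pm}$ is $16\cdot 32\cdot\Vol(R^{\pm})\cdot X^{5/6}+o(X^{5/6})$. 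Computing $\Vol(R^{+})$ and $\Vol(R^{-})$ is an elementary one-variable integral: for fixed $I$, the constraint $J^2<4$ (from height) together with $4I^3-J^2>0$ or $<0$ determines an interval in $J$, and one integrates over $|I|<1$, splitting at $I=0$ and at $|I|^3=1$ to track where $4I^3$ crosses the height bound $4$. One finds $\Vol(R^{+})$ and $\Vol(R^{-})$ in the ratio $1:4$, consistent with the ratio $32:128$ in Theorem~\ref{bqcount}.

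Next I would incorporate the congruence conditions. By the Chinese Remainder Theorem, eligibility is the intersection of a mod-$64$ condition on $(16I,32J)$ and a mod-$27$ condition on $(I,J)$; these are independent, so the density of eligible pairs inside $\frac1{16}\Z\times\frac1{32}\Z$ is the product of the two local densities. Counting the residue classes listed in Theorem~\ref{eligible}: cases (a)--(j) mod $64$ specify, for each admissible residue of $16I$, a single residue class of $32J$ modulo either $32$ or (in case (a), which is really $32J\equiv 0\bmod 32$ with $16I\equiv 0\bmod 16$, so four residues of $16I$ mod $64$) a handful of classes—one tallies the total number of admissible $(16I\bmod 64,\,32J\bmod 64)$ pairs and divides by $64^2$. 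Similarly the mod-$27$ conditions (a)--(d) give a density computed from $(I\bmod 27,\ J\bmod 27)$. This is exactly the same mod-$27$ computation as for binary quartic forms in \cite{BS}, so I can cite that; the mod-$64$ piece is new but is a finite enumeration. The upshot is a single rational constant $\kappa$ such that the number of eligible $(I,J)$ with $H<X$ and $\pm\Delta>0$ is $\kappa\cdot 16\cdot 32\cdot\Vol(R^{\pm})\cdot X^{5/6}+o(X^{5/6})$; since $\kappa$ is the same in both regimes, it cancels in the ratios of Theorem~\ref{bqaverage}, and combined with Theorem~\ref{bqcount} one reads off $3\zeta(2)\zeta(3)$ in each case.

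The main obstacle is bookkeeping rather than conceptual: one must make sure the local density $\kappa$ extracted from Theorem~\ref{eligible} is consistent with the leading constants $\frac{32}{45}$ and $\frac{128}{45}$ produced in Theorem~\ref{bqcount}—i.e., that the normalizations (the factors $16$ and $32$, the choice of $\frac14$ in the height, and the mod-$64$/mod-$27$ counts) all line up so that the quotient is \emph{exactly} $3\zeta(2)\zeta(3)$ and not merely proportional to it. In practice this is guaranteed because Theorem~\ref{bqcount} was itself proved by integrating over the \emph{same} fundamental domain and \emph{same} eligible set, but to present it cleanly one should state the eligible-pair count as a lemma, prove it via the volume computation above plus the product of local densities, and then simply divide. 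A secondary point requiring a word of care is the error term: the count of lattice points in the homogeneously expanding region $X\cdot R^{\pm}$ has error $O(X^{1/3}+X^{1/2})=o(X^{5/6})$ by a standard boundary estimate (the boundary of $R^{\pm}$ is piecewise smooth), which is absorbed into the $o(X^{5/6})$ already present in the numerator, so the limits exist and equal the claimed values.
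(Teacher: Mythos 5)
Your proposal is correct and follows essentially the same route as the paper: the paper's Proposition on eligible-pair counts establishes exactly $N^{\pm}_{I,J}(X)=\frac{16\cdot 32}{128\cdot 27}\Vol(R^{\pm}(X))+O(X^{1/2})$ (the lattice density $16\cdot 32$ of $\frac1{16}\Z\times\frac1{32}\Z$, times the local congruence densities $\frac1{128}$ and $\frac1{27}$ from Theorem~\ref{eligible}, times the region volumes $\frac{8}{5}X^{5/6}$ and $\frac{32}{5}X^{5/6}$), and then divides this into Theorem~\ref{bqcount}. One small imprecision: the local density $\kappa$ does not ``cancel'' in the ratio --- its exact value $\frac1{128\cdot 27}$ is needed to land on precisely $3\zeta(2)\zeta(3)$, and this is a genuine computation rather than something guaranteed by the proof of Theorem~\ref{bqcount} --- but you flag and correctly describe this verification in your final paragraph, so the argument is complete in outline.
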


The fact that this {\it class number} $h(I,J)$ is a finite constant on
average is indeed what allows us to show that the size of the 3-Selmer
group of elliptic curves too is a finite constant on average.

We actually prove a strengthening of Theorem~\ref{bqaverage}; namely,
we obtain the asymptotic count
of ternary cubic forms having bounded invariants that satisfy
any specified finite set of congruence conditions (see \S\ref{congse},
Theorem~\ref{cong2}).  This strengthening turns out to be crucial for
the application to 3-Selmer groups (as in Theorem~\ref{mainellip}),
which we now discuss.

\vspace{0.09in} Recall that, for any positive integer $n$, an element of
the $n$-Selmer group $S_n(E)$ of an elliptic curve $E/\Q$ may be
thought of as a {``locally soluble $n$-covering''}.  An {\it
  $n$-covering} of $E/\Q$ is a genus one curve $C$ together with maps
$\phi:C\to E$ and $\theta:C\to E$, where $\phi$ is an isomorphism
defined over $\C$, and $\theta$ is a degree $n^2$ map defined over
$\Q$ such that the following diagram commutes:
$$\xymatrix{E \ar[r]^{[n]} &E\\C\ar[u]^\phi\ar[ur]_\theta}$$
Thus an $n$-covering $C=(C,\phi,\theta)$ may be viewed as a 
``twist over $\Q$ of the multiplication-by-$n$ map on $E$''.   
Two $n$-coverings $C$ and $C'$ are said to be {\it isomorphic} if
there exists an isomorphism $\Phi:C\to C'$ defined over $\Q$, and an
$n$-torsion point $P\in E$, such that the following diagram
commutes:
$$\xymatrix{E \ar[r]^{+P} &E\\C\ar[u]^\phi\ar[r]_\Phi &C'\ar[u]_{\phi'}}$$
A {\it soluble $n$-covering} $C$ is one that possesses a rational
point, while a {\it locally soluble $n$-covering} $C$ is one that
possesses an $\R$-point and a $\Q_p$-point for all primes $p$.  Then
we have the isomorphisms:
\begin{eqnarray*} 
\{\mbox{\rm soluble $n$-coverings}\}/\sim &\,\cong\,& E(\Q)/nE(\Q); \\
\{\mbox{\rm locally soluble $n$-coverings}\}/\sim &\,\cong\,& S_{n}(E).
\end{eqnarray*}

Now, counting elements of $S_{3}(E)$ leads to counting ternary cubic
forms for the following reason.  There is a result of Cassels (see
\cite[Theorem 1.3]{Cassels}) that states that any locally soluble $n$-covering
$C$ posseses a degree $n$ divisor defined over $\Q$.  If $n=3$, we
thus obtain an embedding of $C$ into $\P^2$, thereby yielding a
ternary cubic form, well-defined up to $\GL_3(\Q)$-equivalence!
Conversely, given any ternary cubic form $f$ having rational
coefficients and nonzero discriminant, there exists a 3-covering
defined over $\Q$ from the plane cubic $C$ defined by the
equation $f=0$ to the elliptic curve $\Jac(C)$, where $\Jac(C)$ is the
Jacobian of $C$ and is given by the equation
\begin{equation}\label{jac}
  Y^2=X^3-\frac{I(f)}{3}X-\frac{J(f)}{27};\end{equation}
an explicit formula for this 3-covering map may be given in terms
of the $\SL_3$-covariants of $f$ (see~\cite[\S3.2]{MMM}).  Note that
(\ref{jac}) gives another nice interpretation for the invariants
$I(f)$ and $J(f)$ of a ternary cubic form $f$.

To carry out the proof of Theorems~\ref{mainellip} and
\ref{ellipcong}, we do the following:
\begin{itemize}
\item For each rational elliptic curve $E_{A,B}$ and element
  $\sigma\in S_{3}(E_{A,B})$, choose a weighted finite set $S_\sigma$ of {\it
    integral} ternary cubic forms, such that
\begin{itemize}
\item the sum of the weights of the elements in $S_\sigma$ is equal to one;
\item each element $f\in S_\sigma$ gives the 3-covering $\sigma$;
\item the invariants $(I(f),J(f))$ of each element $f\in S_\sigma$ agree with
  the invariants $(A,B)$ of the elliptic curve;
\item the weighted set $S=\displaystyle\bigcup_{{A,B}}\bigcup_{\sigma}S_\sigma$ is defined by congruence conditions.
\end{itemize}
The work of Cremona, Fisher, and Stoll \cite{CFS} on ``minimization''
for ternary cubic forms plays a key role in this
construction.
\item Count these weighted integral ternary cubic forms via a weighted
  congruence version of
  Theorem~\ref{bqcount}. The relevant weighted set $S$ of ternary cubic forms is
  defined by infinitely many
  congruence conditions, so a suitable sieve has to be performed.
\end{itemize}
In the last step, we first use a simple sieve to obtain the
optimal upper bounds. The optimal lower bounds, on the other hand, are significantly more difficult
to obtain, and we use the techniques and results of \cite{geosieve} in
order to prove them.

We may compare Theorem~\ref{mainellip} with a result of de
Jong~\cite{deJong}, who showed that for a finite field of
characteristic not equal to 3, the average size of the 3-Selmer group
of all elliptic curves over $\F_q(t)$ is at most $4+\varepsilon(q)$,
for an explicit function $\varepsilon(q)$ that tends to 0 as
$q\to\infty$. The technique in \cite{deJong} was also essentially that of counting
ternary cubic forms over $\F_q(t)$!  Our main result,
Theorem~\ref{mainellip}, may thus be viewed as a precise version of de
Jong's Theorem over the number field $\Q$.  For more on the history of
average ranks of elliptic curves in families, and related results, 
see \cite{BMSW} and
\cite[\S1]{BS}.

This paper is organized as follows. In Section 2, following the
methods of \cite{BS}, we determine the asymptotic number of
$\SL_3(\Z)$-equivalence classes of strongly irreducible integral
ternary cubic forms having bounded height; in particular, we prove
Theorems~\ref{bqcount}, \ref{eligible}, and \ref{bqaverage}.  The
primary method is that of reduction theory, allowing us to reduce the
problem to counting integral points in certain finite volume regions
in $\R^{10}$.  However, the difficulty in such a count, as usual, lies
in the fact that these regions are not compact, but rather have cusps
going off to infinity.  By studying the geometry of these regions via
the averaging method of \cite{dodpf}, we are able to isolate the
subregions of the fundamental domains that contain predominantly (and
all of the) strongly
irreducible points.  The appropriate volume computations for these
subregions are then carried out to obtain the desired~result.

In Section 3, we then describe the precise correspondence between
ternary cubic forms and elements of the $3$-Selmer groups of elliptic
curves.  We show, in particular, that nonidentity elements of the
3-Selmer group correspond to {strongly irreducible} ternary cubic
forms.  We then apply this correspondence, together with the counting
results of Section~2 and a simple sieve (which involves the
determination of certain local mass formulae for 3-coverings of
elliptic curves over $\Q_p$), to prove that the average size of the
$3$-Selmer groups of elliptic curves, when ordered by height, is at
most~4.  We then use the methods of \cite{geosieve}
to obtain the same lower bound on the average size of the $3$-Selmer
groups of elliptic curves, thus proving Theorems~\ref{mainellip} and
\ref{ellipcong}.

Finally, in Section 4, we combine the results of Sections 2 and 3, as well as the
aforementioned results of Dokchitser--Dokchitser~\cite{DD} and
Skinner--Urban~\cite{SU}, to obtain Theorems \ref{algrank0},
\ref{algrank1}, and~\ref{anrank0}.

\section{The number of integral ternary cubic forms having bounded invariants}

Let $V_\R$ denote the space of all ternary cubic forms having
coefficients in $\R$. The group $\GL_3(\R)$ acts on $V_\R$ on the left via linear
substitution of variable; namely, if $\gamma\in\GL_3(\R)$ and $f\in V_\R$, then
$$(\gamma\cdot f)(x,y,z)=f((x,y,z)\cdot\gamma).$$

For a ternary cubic form $f\in V_\R$, let $\H(f)$ denote the Hessian
covariant of $f$, defined by (\ref{hessdef}), and let $I(f)$ and
$J(f)$ denote the two fundamental polynomial invariants of $f$ as in
(\ref{defij}).  As noted earlier, these polynomials $I(f)$ and $J(f)$ 
are invariant
under the action of $\SL_3(\R)\subset\GL_3(\R)$
and, moreover, they are {\it relative invariants} of
degrees $4$ and $6$, respectively, for the action of $\GL_3(\R)$ on
$V_\R$; i.e, $I(\gamma\cdot f)=\det(\gamma)^4I(f)$ and $J(\gamma\cdot
f)=\det(\gamma)^6J(f)$ for $\gamma\in\GL_3(\R)$ and $f\in V_\R$.  

The discriminant $\Delta(f)$ of a ternary cubic form $f$ 
is a relative invariant of degree 12 and is given by the formula
$\Delta(f)=\Delta(I,J)=
(4I(f)^3-J(f)^2)/27$.  
We define the {\it height} $H(f)$ of $f$ by
$$H(f):=H(I,J):=\max\{|I(f)|^3,J(f)^2/4\}.$$ 
Note that the height is also a degree $12$ relative invariant 
for the action of $\GL_3(\R)$ on $V_\R$.

The action of $\SL_3(\Z)\subset\GL_3(\R)$ on
$V_\R$ evidently preserves the lattice $V_\Z$ consisting of integral ternary cubic
forms. In fact, it also preserves the two sets $\pv$ and $\nv$ 
consisting of those integral ternary cubics that
have positive and negative discriminant, respectively.

As before, we say that an integral ternary cubic form is {\it strongly
  irreducible} if the corresponding cubic curve in $\P^2$ has no
rational flex. For an $\SL_3(\Z)$-invariant set $S\subset V_\Z$, let
$N(S;X)$ denote the number of $\SL_3(\Z)$-equivalence classes of
strongly irreducible elements in $S$ having height less than $X$.
Our purpose in this section is to prove
the following rephrasing of Theorem~\ref{bqcount}:
 
\begin{theorem}\label{thsec2main}
  We have
\begin{itemize}
\item[{\rm (a)}]$N(\pv;X)=\displaystyle\frac{\,32\,}{45}\,\zeta(2)\zeta(3)X^{5/6}+o(X^{5/6});$
\item[{\rm (b)}]$N(\nv;X)=\displaystyle\frac{128}{45}\zeta(2)\zeta(3)X^{5/6}+o(X^{5/6}).$
\end{itemize}
\end{theorem}

\subsection{Reduction theory}\label{s21}
Let $\pr$ (resp.\ $\nr$) denote the set of elements in $V_\R$ having
positive (resp.\ negative) discriminant.  We first
construct fundamental sets in $\pnr$ for the action of $\GL^+_3(\R)$
on $\pnr$, where $\GL^+_3(\R)$ is the subgroup of all elements in
$\GL_3(\R)$ having positive determinant.

To this end, let $f$ be a ternary cubic form in $V_\R$ having nonzero
discriminant, and let $C$ denote the cubic curve in $\P^2$ defined by
the equation $f(x,y,z)=0$.  The set of flexes of $C$ is given by the
set of common zeroes of $f$ and $\H(f)$ in $\P^2$, and hence the number of
such flexes is 9 by Bezout's Theorem. As both $f$ and $\H(f)$ have real
coefficients, the flex points of $C$ are either real or come in complex
conjugate pairs. Therefore, since the total number of flex points is odd, $C$
possesses at least one real flex point.

This implies, in particular, that any ternary cubic form over $\R$ is
$\SL_3(\R)$-equivalent to one in Weierstrass form, i.e., one in the form
\begin{equation}\label{eqwf}
f(x,y,z)=x^3+Axz^2+Bz^3-y^2z
\end{equation}
for some $A,B\in\R$.  It can be checked that the ternary cubic form
$f$ in (\ref{eqwf}) has invariants $I(f)$ and $J(f)$ equal to $-3A$
and $-27B$, respectively. Thus, since $I$ and $J$ are relative
invariants of degrees $4$ and $6$, respectively, two ternary cubic
forms $f$ and $g$ over $\R$, having nonzero discriminant, are
$\GL_3^+(\R)$-equivalent if and only if there exists a positive
constant $\lambda\in\R$ such that $I(f)=\lambda^4I(g)$ and
$J(f)=\lambda^6J(g)$. It follows that a fundamental set $\pl$
(resp.\ $\nl$) for the action of $\GL_3^+(\R)$ on $\pr$ (resp.\ $\nr$)
may be constructed by choosing one ternary cubic form, having
invariants $I$ and $J$, for each pair $(I,J)\in\R\times\R$ such that
$H(I,J)=1$ and $4I^3-J^2>0$ (resp.\ $4I^3-J^2<0$).  We may thus choose
\begin{eqnarray*}
\pl&=&\Bigl\{x^3-\frac{1}{3}xz^2-\frac{J}{27}z^3-y^2z:-2< J <2\Bigr\}\\
\nl&=&\Bigl\{x^3-\frac{I}{3}xz^2\pm\frac{2}{27}z^3-y^2z:-1\leq I < 1\Bigr\}\cup
\Bigl\{x^3+\frac{1}{3}xz^2-\frac{J}{27}z^3-y^2z:-2<J < 2\Bigr\}.
\end{eqnarray*}

The key fact that we need about these fundamental sets $\pnl$ is
that the coefficients of all the ternary cubic forms in the $\pnl$ are
bounded. Note also that if $G_0\subset\GL_3^+(\R)$ is any fixed
compact subset then, for any $h\in G_0$, the set $h\cdot\pnl$ is also a
fundamental set for the action of $\GL_3^+(\R)$ on~$\pnv$, and the
coefficients of the forms in $h\cdot\pnl$ are bounded independent of
$h\in G_0$.

We also require the following fact whose proof we postpone to \S\ref{s31}:
\begin{lemma}\label{lemstabsize}
  Let $f\in V_\R$ be any ternary cubic form having nonzero
  discriminant. Then the order of the stabilizer in $\GL_3^+(\R)$ $($and hence in $\SL_3(\R))$ of
  $f$ is $3$.
\end{lemma}

Let $\FF$ denote a fundamental domain in $\GL_3^+(\R)$ for the left action of
$\GL_3^+(\Z)=\SL_3(\Z)$ on $\GL_3^+(\R)$ contained 
in a standard Siegel set~\cite[\S2]{BH}. We may take
$\FF=\{nak\lambda:n\in N'(a),\,a\in A',\,k\in K,\,\lambda\in\Lambda\}$, where
\begin{eqnarray*}
  &K&\!=\!\;\;\;{\mbox{subgroup $\SO_3(\R)\subset\GL_3^+(\R)$
of orthogonal transformations; }} 
\\[.05in]
  &A'&\!\subset\!\!\;\;\,\,\{a(s_1,s_2):s_1,s_2>c\},\\
  &&\;\;\;\;\;\;\;\;{\rm where}\;a(s_1,s_2)=\left(\begin{array}{ccc}
{s_1^{-2}s_2^{-1}} & {} & {} \\ {} & {s_1s_2^{-1}} & {} \\ {} & {} & {s_1s_2^2}
\end{array}\right);\\[.025in]
&N'(a)&\!=\!\!\;\;\,\,\{n(u_1,u_2,u_3):(u_1,u_2,u_3)\in\nu(a)\},\;\\
&&\;\;\;\;\;\;\;\;{\rm where}\; n(u_1,u_2,u_3)=\left(\begin{array}{ccc} 
{1} & {} & {} \\ {u_1} & {1} & {} \\ {u_2} & {u_3} & {1}
\end{array}\right);\\[.025in]
&\Lambda\!&\!=\!\!\;\;\,\,\{\lambda:\lambda>0\},\;\\
&&\;\;\;\;\;\;\;\;{\rm where}\;\lambda=\left(\begin{array}{ccc}
{\lambda} & {} & {} \\ {} & {\lambda} & {} \\ {} & {} & {\lambda}
\end{array}\right);
\end{eqnarray*}
here $\nu(a)$ is a measurable subset of $[-1/2,1/2]^3$ dependent only
on $a\in A'$, and $c>0$ is an absolute constant.

For $h\in\GL_3^+(\R)$, we regard $\FF h\cdot\pnl$ as a multiset, where
the multiplicity of a point $f\in V_\R^\pm$ is equal to
$\#\{g\in\FF:f\in gh\cdot\pnl\}$. As in \cite[\S2.1]{BS}, it follows
that for any $h\in \GL_3^+(\R)$ and $f\in
V_\R^\pm$, the $\SL_3(\Z)$-orbit of $f$ is represented $m(f)$ times in
$\FF\cdot h\pnl$, where
\begin{equation}\label{eqmxfirst}
m(f):=\#\Stab_{\SL_3(\R)}(f)/\#\Stab_{\SL_3(\Z)}(f);  
\end{equation}
i.e., the sum of the multiplicity in $\FF\cdot
h\pnl$ of $f'$, over all forms $f'$ that are $\SL_3(\Z)$-equivalent
to $f$, is equal to $m(f)$.

For any given $A\in\SL_3(\Z)$, the set of elements in $V_\R$ fixed
by $A$ has measure $0$. Since $\SL_3(\Z)$ is countable, we see that
the set $\{f\in V_\R:\#\Stab_{\SL_3(\Z)}(f)>1\}$ has measure $0$ as
well. Thus, by Lemma \ref{lemstabsize}, the multiset $\FF h\cdot\pnl$ is essentially the union of $3$ fundamental
domains for the action of $\SL_3(\Z)$ on $\pnr$. 

For $h\in\GL_3^+(\R)$, let $\pRh$ and $\nRh$ denote the multisets defined by
$$\pnRh:=\{f\in\FF h\cdot\pnl:H(f)<X\}.$$ 
We will show (cf.\ Lemma~\ref{lemtemp2}) that the number of elements in
$\RR_X(h\cdot L^\pm)$ having nontrivial stabilizer in $\SL_3(\Z)$ (in
fact, in $\SL_3(\Q)$) is negligible.  
It follows, by \eqref{eqmxfirst}
and Lemma \ref{lemstabsize}, that the
quantity $3N(\pnv;X)$ is equal to the number of strongly irreducible
integral ternary cubic forms contained in $\pnRh$, up to an error of
$o(X^{5/6})$.

Counting strongly irreducible integer points in a single such $\pnRh$
is difficult because the domain $\pnRh$ is unbounded (although we
will show that it has finite volume). As in
\cite{BS}, we simplify the counting by averaging over lots of such
domains, i.e., by averaging over a continuous range of elements $h$
lying in a certain fixed compact subset of $\GL^+_3(\R)$.

\subsection{Averaging and cutting off the cusp}\label{s22}

Let $G_0\subset\GL_3^+(\R)$ be a compact semialgebraic 
$K$-invariant subset that is
the closure of some nonempty open set in $\GL_3^+(\R)$, such that
every element in $G_0$ has determinant greater than $1$. For any
$\SL_3(\Z)$-invariant set $S\subset \pnv$, let $S^\irr$ denote the set
of strongly irreducible elements of $S$.  We pick $dh$ to be a Haar
measure on $\GL_3^+(\R)$, and we normalize $dh$ as follows: if
$h\in\GL_3^+(\R)$ is equal to $h=n(u)a(s_1,s_2)k\lambda$ in its
Iwasawa decomposition, then
$$
dh=s_1^{-6}s_2^{-6}dud^\times sdkd^\times\lambda,
$$
where $dk$ is Haar measure on $K$ normalized so that $K$ has measure $1$.
Then we have
\begin{equation}
N(S;X)=\frac{\int_{h\in G_0}\#\{\pnRhl\cap S^\irr\}dh\;}{C_{G_0}}+o(X^{5/6})
\end{equation}
where $L=\pnl$ and $C_{G_0}=3\int_{h\in G_0}dh$.

For $na(s_1,s_2)\lambda\in\FF$, let us write
$B(n,s_1,s_2,\lambda,X):=\{f\in na(s_1,s_2)\lambda G_0 L:H(f)<X\}$.
We then have the following equality, which follows from an argument
identical to the proof of \cite[Theorem 2.5]{BS}:
\begin{equation}\label{eqavgimp}
  N(S;X)=\frac1{C_{G_0}}\int_{g\in N'(a)A'\Lambda}\#\{S^\irr\cap 
  B(n,s_1,s_2,\lambda,X)\}s_1^{-6}s_2^{-6}dn\, d^\times t\,d^\times \lambda+o(X^{5/6}).
\end{equation}
To simplify the right hand side of (\ref{eqavgimp}), we require 
the following lemma which states that the set
$B(n,s_1,s_2,\lambda,X)$ contains no strongly irreducible integral points 
if $s_1$ or $s_2$ is large enough (i.e., when we
are in the ``cuspidal regions'' of the fundamental domains):
\begin{lemma}\label{lemnoirred}
  Let $C>1$ be a constant that bounds the absolute values of the
  $x^3$-, $x^2y$-, $xy^2$-, and $x^2z$-coefficients of all the forms
  in $G_0\cdot \pnl$. Then the set $B(n,s_1,s_2,\lambda,X)$ contains no
  strongly irreducible integral ternary cubic forms if
  $s_1>C^{1/3}\lambda/c$ or if $s_2>C^{1/3}\lambda/c^2$.
\end{lemma}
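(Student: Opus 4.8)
The plan is to unwind the definition of $B(n,s_1,s_2,\lambda,X)$ and look at what the triangular matrix $n a(s_1,s_2)\lambda$ does to the leading coefficients of a ternary cubic form. Let $f \in B(n,s_1,s_2,\lambda,X)$, so $f = n a(s_1,s_2)\lambda \cdot g_0 \cdot f_0$ for some $g_0 \in G_0$ and $f_0 \in \pnl$; set $\tilde f := g_0 f_0$, and note that by hypothesis the $x^3,\,x^2y,\,x^2z,\,xy^2$-coefficients of $\tilde f$ are all bounded in absolute value by $C$. Since $\lambda$ acts as a scalar it multiplies $f$ by $\lambda^3$, and $n(u_1,u_2,u_3)$ is unipotent lower-triangular, hence fixes the $x^3$-coefficient and affects the coefficients of the remaining monomials only by adding multiples of coefficients of monomials that are ``higher'' in the appropriate ordering. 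So the key point is to track the action of the diagonal torus element $a(s_1,s_2) = \mathrm{diag}(s_1^{-2}s_2^{-1},\,s_1 s_2^{-1},\,s_1 s_2^2)$. First I would record how $a(s_1,s_2)$ scales each monomial weight: writing a monomial $x^i y^j z^k$ with $i+j+k=3$, the substitution $(x,y,z)\mapsto (x,y,z)\,a$ multiplies the coefficient of $x^i y^j z^k$ by $s_1^{-2i+j+k}s_2^{-i-j+2k}$. In particular the $x^3$-coefficient is scaled by $s_1^{-6}s_2^{-3}$, the $x^2 y$-coefficient by $s_1^{-3}s_2^{-3}$, the $x^2 z$-coefficient by $s_1^{-3}s_2^0 = s_1^{-3}$, and the $xy^2$-coefficient by $s_1^0 s_2^{-3} = s_2^{-3}$.

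Now I would compute the four leading coefficients of $f$ itself, using that $n$ is lower-triangular with entries bounded by $1/2$ (as $(u_1,u_2,u_3)\in[-1/2,1/2]^3$). The cleanest route is to order monomials so that $x^3$ is the unique top element, then $x^2 y$, then $x^2 z$, then $x y^2$, so that applying $n$ expresses each of these coefficients of $f$ as $\lambda^3$ times (the corresponding $a$-scaled coefficient of $\tilde f$) plus a bounded combination of $a$-scaled coefficients of $\tilde f$ attached to strictly higher monomials. Crucially, the higher monomials contributing to $x^2y$, $x^2z$, $xy^2$ are among $\{x^3, x^2 y, x^2 z\}$ — that is, they are themselves drawn from our bounded list — so each of the four leading coefficients of $f$ is bounded in absolute value by an explicit constant times $\lambda^3 C$ times the relevant product of powers of $s_1,s_2$; for instance the $x^3$-coefficient has absolute value $\le C\lambda^3 s_1^{-6}s_2^{-3}$. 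The upshot is that when $s_1$ or $s_2$ is large, all four leading coefficients of $f$ have absolute value strictly less than $1$, hence — being integers — they are all zero.

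Finally I would observe that a ternary cubic form whose $x^3,\,x^2 y,\,x^2 z,\,x y^2$-coefficients all vanish has the shape $f = x(\alpha yz + \beta z^2) + (\text{cubic in } y,z)$, i.e. $f \in (x)\cdot(\,\cdots) + \Q[y,z]_3$; in fact $f$ is then visibly divisible by nothing forced, but the point $[0:0:1]$ (or a rational point coming from factoring the binary part) lies on $\{f=0\}$ — more to the point, such an $f$ has $[0:0:1]$ as a flex point: one checks directly from \eqref{hessdef} that $\H(f)$ also vanishes at $[0:0:1]$ when these coefficients vanish. Hence $f$ has a rational flex and is \emph{not} strongly irreducible. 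Quantitatively, the condition $C\lambda^3 s_1^{-6}s_2^{-3} < 1$ together with the analogous inequalities for the other three coefficients, after using $s_1,s_2 > c$ to absorb the remaining powers, is implied by $s_1 > 3C\lambda/c^3$ or $s_2 > 3C\lambda/c^3$ — I would carry out this elementary bookkeeping to pin down the stated constant $3C/c^3$. The only mildly delicate step is the last one: verifying that the vanishing of exactly these four coefficients forces a rational flex (and checking it is these four and not some other quadruple), which is a short explicit computation with the Hessian, and confirming the arithmetic that turns the four coefficient inequalities into the single clean threshold on $s_1$ or $s_2$.
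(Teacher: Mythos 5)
Your overall strategy is the paper's: track the torus weights of the leading coefficients, force them to be less than $1$ in absolute value (hence zero for integral forms), and conclude that the form has a rational flex. But two concrete steps are wrong as written, and the first one breaks the argument. Your own weight computation shows that the $xy^2$-coefficient scales by $s_1^{0}s_2^{-3}$ and the $x^2z$-coefficient by $s_1^{-3}s_2^{0}$. So if only $s_1$ is large (with $s_2$ near $c$), nothing forces the $xy^2$-coefficient to be small, and if only $s_2$ is large, nothing forces the $x^2z$-coefficient to be small. Your claim that ``when $s_1$ or $s_2$ is large, all four leading coefficients of $f$ have absolute value strictly less than $1$'' is therefore false. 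The correct (and sufficient) statement, which is what the paper uses, treats the two cases asymmetrically: $s_1>3C\lambda/c^3$ forces the $x^3$, $x^2y$, and $x^2z$ coefficients to vanish, while $s_2>3C\lambda/c^3$ forces the $x^3$, $x^2y$, and $xy^2$ coefficients to vanish.

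The second error is in the endgame: the rational flex is at $[1:0:0]$, not $[0:0:1]$. If the $z^3$-coefficient is nonzero, $[0:0:1]$ does not even lie on the curve. By contrast, in either of the two vanishing patterns above, every surviving monomial is divisible by $y$ or $z$, so $f(1,0,0)=0$; and a direct computation with (\ref{hessdef}) shows $\H(f)(1,0,0)=0$ as well (e.g., when $x^3,x^2y,x^2z$ vanish, $f=xq(y,z)+c(y,z)$ with $q$ quadratic, so the first row and column of the Hessian matrix vanish at $(1,0,0)$; in the other case the first two rows are proportional there). Thus $[1:0:0]$ lies in the common zero locus of $f$ and $\H(f)$, which is exactly what strong irreducibility forbids. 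With these two corrections --- three coefficients per case rather than four, and the flex at $[1:0:0]$ --- your argument becomes the paper's proof.
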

\begin{proof}
  It is easy to see that if $s_1>C^{1/3}\lambda/c$, then the absolute
  values of the $x^3$-, $x^2y$-, and $x^2z$-coefficients of any
  ternary cubic form in $B(n,s_1,s_2,\lambda,X)$ are all less than
  $1$. Therefore, in this case any integral ternary cubic form in
  $B(n,s_1,s_2,\lambda,X)$ must have its $x^3$-, $x^2y$-, and
  $x^2z$-coefficients equal to $0$, and such a form has a rational
  flex at $[1:0:0]\in\P^2$ and so is not strongly irreducible.
  
  Similarly, if $s_2>C^{1/3}\lambda/c^2$, then any integral ternary
  cubic form in $B(n,s_1,s_2,\lambda,X)$ has its $x^3$-, $x^2y$-, and
  $xy^2$-coefficients equal to $0$, and such a form too always has a
  flex at $[1:0:0]\in\P^2$, and so is not strongly irreducible.
\end{proof}

Now let $V_\Z^\red$ denote the set of all integral ternary cubic
forms that are not strongly irreducible. Then we have the following
lemma, which states that the number of {reducible} points---i.e.,
points 
in $V_\Z^\red$---that are in the
``main body'' of the fundamental domains is negligible.

\begin{lemma}\label{lemfewred}
Let $\FF'$ denote the set of elements $na(s_1,s_2)\lambda k\in\FF$ that satisfy
$s_1<C^{1/3}\lambda/c$ and $s_2<C^{1/3}\lambda/c^2$. Then
$$\int_{na(s_1,s_2)\lambda k\in\FF'}\#\{V_\Z^\red\cap B(n,s_1,s_2,
\lambda,X)\}s_1^{-6}s_2^{-6}dn\, d^\times t\,d^\times \lambda dk=o(X^{5/6}).$$
\end{lemma}
We defer the proof of Lemma \ref{lemfewred} to Section 2.5.

To estimate the number of integral points in
$B(n,s_1,s_2,\lambda,X)$, we use the following proposition due to
Davenport \cite{Davenport1}.

\begin{proposition}\label{lemdav}
  Let $\mathcal R$ be a bounded, semi-algebraic multiset in $\R^n$
  having maximum multiplicity $m$, and that is defined by at most $k$
  polynomial inequalities each having degree at most $\ell$.  
Then the number of integer
  lattice points $($counted with multiplicity$)$ contained in
 $\mathcal R$ is
\[\Vol(\mathcal R)+ O(\max\{\Vol(\bar{\mathcal R}),1\}),\]
where $\Vol(\bar{\mathcal R})$ denotes the greatest $d$-dimensional 
volume of any projection of $\mathcal R$ onto a coordinate subspace
obtained by equating $n-d$ coordinates to zero, where 
$d$ takes all values from
$1$ to $n-1$.  The implied constant in the second summand depends
only on $n$, $m$, $k$, and $\ell$.
\end{proposition}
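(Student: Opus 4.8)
The plan is to prove Proposition~\ref{lemdav} by Davenport's slicing argument, carried out by induction on the dimension $n$; the only point beyond the classical ``principle of Lipschitz'' is to organize the slicing so that the given triangular unipotent transformation restricts to a transformation of the same type on each slice. First I would make three harmless reductions. A multiset of maximum multiplicity $m$ is a union of at most $m$ honest regions, so it suffices to treat a region and absorb $m$ into the implied constant. Reversing the order of the coordinates interchanges upper- and lower-triangular unipotent matrices and merely permutes the coordinate-subspace projections of $\mathcal R$ among themselves, hence preserves $\Vol(\bar{\mathcal R})$; so we may assume the transformation $\tau$ with $\mathcal R' = \tau\mathcal R$ is \emph{upper}-triangular unipotent. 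Finally, since $\Vol(\cdot)$ and every coordinate projection volume are translation-invariant, and an integral translation does not change the lattice-point count at all, we lose nothing by proving the apparently stronger statement in which $\mathcal R'$ is allowed to be $\tau\mathcal R + v$ for an arbitrary $v\in\R^n$; this makes the induction self-contained.

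For $n=1$ the matrix $\tau$ is trivial and $\mathcal R + v$ is a union of $O_{k,\ell}(1)$ intervals (the $\le k$ polynomials, of degree $\le\ell$, have $\le k\ell$ roots in total), each of which contains (its length) $+\, O(1)$ integers; since there are no proper coordinate projections, this is $\Vol(\mathcal R)+O(1)$, as claimed. For the inductive step, write $x=(y,x_n)$ with $y\in\R^{n-1}$ and $v=(v',v_n)$. Because $\tau$ is upper-triangular unipotent it fixes $x_n$, and on each horizontal slice $\{x_n=s\}$ it acts on the $y$-coordinates as a fixed upper-triangular unipotent $\tau_0\in\GL_{n-1}(\R)$ followed by a translation by a vector depending affinely on $s$. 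Hence, writing $\mathcal R_s:=\{y\in\R^{n-1}:(y,s)\in\mathcal R\}$ --- which is again cut out by $\le k$ polynomial inequalities of degree $\le\ell$, now in $n-1$ variables --- the slice $\{y:(y,b)\in\mathcal R'+v\}$ is $\tau_0(\mathcal R_{b-v_n})$ translated by a fixed vector, for each $b\in\Z$. Applying the inductive hypothesis slice by slice and summing over $b$, the number of lattice points in $\mathcal R'+v$ equals
\[
\sum_{b\in\Z}\Vol_{n-1}(\mathcal R_{b-v_n})\;+\;O\!\Big(\sum_{b}\max\{\,\Vol(\bar{\mathcal R}_{b-v_n})\,,\,1\,\}\Big),
\]
where $\Vol_{n-1}$ is $(n-1)$-dimensional volume and $\Vol(\bar{\mathcal R}_s)$ denotes the analogue of $\Vol(\bar{\mathcal R})$ for the slice $\mathcal R_s\subset\R^{n-1}$.

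The main term is a Riemann sum for $\int_{\R}\Vol_{n-1}(\mathcal R_s)\,ds=\Vol(\mathcal R)$. The function $s\mapsto\Vol_{n-1}(\mathcal R_s)$ is semi-algebraic of complexity bounded in terms of $n,k,\ell$ and is supported on the bounded set $\pi_n(\mathcal R)$, hence is piecewise monotone with $O_{n,k,\ell}(1)$ pieces, so its total variation is $O\big(\sup_s\Vol_{n-1}(\mathcal R_s)\big)\le O\big(\Vol(\bar{\mathcal R})\big)$, since every slice lies in the $(n-1)$-dimensional coordinate projection of $\mathcal R$ that forgets $x_n$; thus the main term is $\Vol(\mathcal R)+O(\Vol(\bar{\mathcal R}))$. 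For the error term, the number of $b$ with $\mathcal R_{b-v_n}\neq\emptyset$ is at most the number of integers in $\pi_n(\mathcal R)$, which is $O(\max\{\Vol(\bar{\mathcal R}),1\})$; this absorbs every ``$1$'' in the maximum. For the rest, bound $\max_{I\subseteq\{1,\dots,n-1\}}(\cdot)$ by $\sum_I(\cdot)$ and interchange with the sum over $b$: for each such $I$ of size $d$, writing $\pi_I$ for the projection onto the coordinates in $I$,
\[
\sum_{b}\Vol_d\big(\pi_I(\mathcal R_{b-v_n})\big)=\int_{\R}\Vol_d\big(\pi_I(\mathcal R_s)\big)\,ds+O\big(\Vol(\bar{\mathcal R})\big)=\Vol_{d+1}\big(\pi_{I\cup\{n\}}(\mathcal R)\big)+O\big(\Vol(\bar{\mathcal R})\big),
\]
where we used that the $x_n=s$ slice of $\pi_{I\cup\{n\}}(\mathcal R)$ is exactly $\pi_I(\mathcal R_s)$, the same variation bound as before, and $|I\cup\{n\}|=d+1\le n-1$ (so $\Vol_{d+1}(\pi_{I\cup\{n\}}(\mathcal R))\le\Vol(\bar{\mathcal R})$). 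Since there are only $O_n(1)$ subsets $I$, the error term is $O(\Vol(\bar{\mathcal R}))$, and combining everything gives $\#(\Z^n\cap(\mathcal R'+v))=\Vol(\mathcal R)+O(\max\{\Vol(\bar{\mathcal R}),1\})$, which closes the induction.

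The step I expect to be the real obstacle is exactly this control of the error term: the naive bound $\Vol(\bar{\mathcal R}_s)\le\Vol(\bar{\mathcal R})$, summed over the $\asymp\Vol(\bar{\mathcal R})$ relevant values of $b$, would give only an error of size $\Vol(\bar{\mathcal R})^2$. It is the identity $\int_{\R}\Vol_d(\pi_I(\mathcal R_s))\,ds=\Vol_{d+1}(\pi_{I\cup\{n\}}(\mathcal R))$, together with the fact that adjoining the index $n$ still leaves at most $n-1$ coordinates, that keeps the error linear in $\Vol(\bar{\mathcal R})$. The remaining ingredients --- that a bounded semi-algebraic set of bounded complexity is a union of boundedly many intervals in dimension one, that slice-volume functions are piecewise monotone with boundedly many pieces, and that slices of bounded-complexity semi-algebraic sets have bounded complexity --- are standard effective semi-algebraic finiteness statements (Tarski--Seidenberg with degree bounds), with all constants depending only on $n$, $k$, $\ell$ (and $m$).
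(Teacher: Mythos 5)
The paper does not prove this proposition itself; it is quoted from Davenport's ``On a principle of Lipschitz,'' and your slicing induction is essentially Davenport's argument, including the correct handling of the one genuinely delicate point (using $\int_\R \Vol_d(\pi_I(\mathcal R_s))\,ds=\Vol_{d+1}(\pi_{I\cup\{n\}}(\mathcal R))$ with $d+1\le n-1$ to keep the error linear rather than quadratic in $\Vol(\bar{\mathcal R})$). However, there is one genuine flaw in the justification of the sum-versus-integral step. You assert that $s\mapsto\Vol_{n-1}(\mathcal R_s)$ is ``semi-algebraic of complexity bounded in terms of $n,k,\ell$,'' and deduce piecewise monotonicity with $O_{n,k,\ell}(1)$ pieces from ``Tarski--Seidenberg with degree bounds.'' The first assertion is false: fiber volumes of bounded semi-algebraic sets are in general transcendental functions of the parameter. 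For example, with $\mathcal R=\{(x,y,s): 1\le x\le 2,\ 0\le y\le 1/(x+s),\ 0\le s\le 1\}$ one gets $\Vol(\mathcal R_s)=\log\frac{2+s}{1+s}$, which is not semi-algebraic. The conclusion you need (piecewise monotone with a uniformly bounded number of pieces) does happen to be true, but it is a consequence of o-minimality of a much larger structure (fiber volumes of semi-algebraic families are log-analytic, hence definable in $\R_{\mathrm{an},\exp}$, plus uniform finiteness) --- not of Tarski--Seidenberg. As written, this step is not justified, and the same issue recurs when you invoke ``the same variation bound as before'' for $s\mapsto\Vol_d(\pi_I(\mathcal R_s))$.

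The fix is standard and is in fact what Davenport does: compare the sum over $b$ with the integral by applying Fubini in the \emph{transverse} direction rather than by controlling the variation of the slice-volume function. Writing $\rho=\{1,\dots,n-1\}$, one has
\begin{equation*}
\sum_{b\in\Z}\Vol_{n-1}(\mathcal R_{b-v_n})=\int_{\R^{n-1}}\#\{b\in\Z:(y,b-v_n)\in\mathcal R\}\,dy,
\qquad
\Vol(\mathcal R)=\int_{\R^{n-1}}\bigl|\{t:(y,t)\in\mathcal R\}\bigr|\,dt\,dy ,
\end{equation*}
and for each fixed $y$ the fiber $\{t:(y,t)\in\mathcal R\}$ is a union of $O_{k,\ell}(1)$ intervals (this \emph{is} elementary: $\le k$ univariate polynomial conditions of degree $\le\ell$), so the integrand difference is $O(1)$ pointwise and supported on $\pi_\rho(\mathcal R)$; integrating gives an error $O(\Vol_{n-1}(\pi_\rho(\mathcal R)))=O(\Vol(\bar{\mathcal R}))$. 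The identical argument applied to the set $\pi_{I\cup\{n\}}(\mathcal R)\subset\R^{d+1}$ (whose fibers parallel to the $x_n$-axis are again unions of boundedly many intervals, by effective quantifier elimination applied to the projection) replaces your variation bound in the error-term computation. With that substitution your proof is complete and coincides with the cited one.
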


Since every element of $G_0$ was assumed to have determinant greater than
$1$, the set $B(n,s_1,s_2,\lambda,X)$ is empty unless
$c_1\leq\lambda\leq X^{1/36}$, where $c_1$ is a constant such that
$1/c_1^3$ bounds the determinants of all the elements in $G_0$ from
above.  By Equation (\ref{eqavgimp}), Lemmas
\ref{lemnoirred} and \ref{lemfewred}, and Proposition~\ref{lemdav}, we
see that $N(\pnv;X)$ equals
\begin{equation}\label{eqtcomp}
  \frac{1}{C_{G_0}}\int_{\substack{na(s_1,s_2)\lambda k\in\FF'\\c_1\leq\lambda\leq X^{1/36}}}\bigl(\Vol(B(n,s_1,s_2,
  \lambda,X))+O(\Vol(\overline{B(n,s_1,s_2,\lambda,X)}))\bigr)s_1^{-6}s_2^{-6}dn\,
  d^\times s\,d^\times \lambda\, dk + o(X^{5/6})
\end{equation}
because $\Vol(\overline{B(n,s_1,s_2,\lambda,X)})\gg1$ when $\lambda\geq c_1$.

It is easily checked that when $na(s_1,s_2)\lambda k\in\FF'$ and
$\lambda\geq c_1$, the projection of $B(n,s_1,s_2,\lambda,X)$ onto any
coordinate in $V_\R$, apart from the $x^3$- and $x^2y$-coefficients,
is bounded below independent of $n$, $s_1$, $s_2$, and $\lambda$. (For
example, the projection of $B(n,s_1,s_2,\lambda,X)$ onto the
$x^2z$-coefficient is bounded below by an absolute constant times
$\lambda^3s_1^{-3}$, which is bounded from below since $\lambda\geq c_1$ and $s_1\ll\lambda$.)
Thus, the integral of the error term in the
integrand of (\ref{eqtcomp}) is computed to be
$$O\Bigl(\int_{\lambda=0}^{X^{1/36}}\int_{s_1,s_2=c}^{\lambda}(\lambda^{27}s_1^6s_2^3+\lambda^{24}s_1^9s_2^6)s_1^{-6}s_2^{-6}d^\times s\,d^\times \lambda\Bigr)=O(X^{3/4}).$$
Meanwhile, the integral of the main term in the integrand of
(\ref{eqtcomp}) is equal to
\begin{eqnarray*}
&&\frac{1}{C_{G_0}}\int_{\substack{na(s_1,s_2)\lambda k\in\FF'\\c_1\leq\lambda\leq X^{1/36}}}\bigl(\Vol(B(n,s_1,s_2,
  \lambda,X))s_1^{-6}s_2^{-6}dn\, d^\times s\,d^\times \lambda\, dk=
\frac{1}{C_{G_0}}\int_{h\in
  G_0}\Vol(\pnRh)dh\\&&-O\Bigl(\int_{\substack{na(s_1,s_2)\lambda k\in\FF/\FF'}}\lambda^{30}s_1^{-6}s_2^{-6}dn\,d^\times s\,d^\times \lambda+
\int_{\substack{na(s_1,s_2)\lambda k\in\FF\\\lambda<c_1}}\lambda^{30}s_1^{-6}s_2^{-6}dn\,d^\times s\,d^\times \lambda\Bigl),
\end{eqnarray*}
since $\Vol(B(n,s_1,s_2,\lambda,X))=O(\lambda^{30})$.
The error term in the above equation is computed to be $O(X^{2/3})$.
Hence it follows from Equation (\ref{eqtcomp}), and the fact that $\Vol(\pnRh)$ is independent of $h$, that
\begin{equation}\label{eqfinvol}
N(\pnv;X)=\frac{1}{3}\Vol(\pnRR\,\!)+o(X^{5/6}).
\end{equation}
Therefore, to prove Theorem \ref{thsec2main}, it remains only to compute the
volume $\Vol(\pnRR\,\!)$.

\subsection{Computing the volume}
In this section, we compute the volumes of $\pnRR$. To this end, let
$R^\pm:=\Lambda\cdot L^\pm$. Then the sets $R^\pm$ consist of one
element in $V_\R^\pm$ having invariants $(I,J)$ for each
$(I,J)\in\R\times\R$ such that $\pm\Delta(I,J)\in\R_{>0}$. Let
$R^\pm(X)$ denote the set of points in $R^\pm$ having height bounded
by $X$. Consider the following action of the group $\PGL_3(\R)$ on $V_\R$ given by
$$
\gamma\cdot f(x,y,z):=\frac1{\det(\gamma)}f((x,y,z)\cdot\gamma),
$$
for $\gamma\in\PGL_3(\R)$ and $f\in V_\R$. Let $\FF_{\PGL_3}$ denote
the image in $\PGL_3(\R)$ of $\FF$. Then $\FF_{\PGL_3}$ is a
fundamental domain for the action of $\PGL_3(\Z)$ on $\PGL_3(\R)$. We
have $\pnRR=\FF_{\PGL_3}\cdot R^\pm(X)$.

Let $\omega$ be a differential which generates the rank $1$ module of
top-degree differentials of $\PGL_3$ over $\Z$. Then $\omega$ is well
defined up to sign. To compute the volume of the multiset
$\FF_{\PGL_3}\cdot R^\pm(X)$, we have the following proposition:
\begin{proposition}\label{bqjac}
For any measurable function $\phi$
  on $V_\R$, we have
\begin{equation}\label{Jac}
\frac{4}{9}\int_{R^\pm}
\int_{\PGL_3(\R)}\phi(g\cdot p_{I,J})\,\omega(g)\,dI dJ
=
\int_{\PGL_3(\R)\cdot R^\pm}\phi(v)dv
=
3\int_{V_\R^\pm}\phi(v)dv.
\end{equation}
where $p_{I,J}\in R^\pm$ is the point having invariants equal to $I$
and $J$ and we regard $\PGL_3(\R)\cdot R^{\pm}$ as a multiset.
\end{proposition}
The above proposition may be verified by a direct Jacobian computation. We
will also give a more noncomputational proof in \S\ref{s32}.

Since the volume of $\FF_{\PGL_3}$ is equal to $3\zeta(2)\zeta(3)$ (see \cite{Langlands}), we have:
\begin{equation}
  \int_{\pnRR}\!dv=\int_{\FF_{\PGL_3}\cdot R^\pm(X)}\!dv=
  \frac{4}{9}\int_{R^\pm(X)}\int_{\FF_{\PGL_3}}\omega(g)\,dI\,dJ=\frac{4\zeta(2)\zeta(3)}{3}
\int_{R^\pm(X)}dI\,dJ.
\end{equation}
The quantity $\int_{R^+(X)}dI\,dJ$ is equal to
\begin{equation}\label{volrplus}
\int_{I=0}^{X^{1/3}}\int_{J=-2I^{3/2}}^{2I^{3/2}}dJ\,dI=\int^{X^{1/3}}_{I=0}4I^{3/
2}dI 
=\frac{8}{5}X^{5/6},
\end{equation}
while $\int_{R^-(X)}dI\,dJ$ is equal to
\begin{equation}\label{volrminus}
\int_{I=-X^{1/3}}^{X^{1/3}}\int_{J=-2X^{1/2}}^{2X^{1/2}}dJ\,dI-\int_{\rrpx}dI\,dJ=8X
^{5/6}-\frac{8}{5}X^{5/6}=\frac{32}{5}X^{5/6}.
\end{equation}
We conclude that 
\begin{equation}
\begin{array}{ccc}
\Vol(\pRR\,\!)&=&\displaystyle{\frac{32\zeta(2)\zeta(3)}{15}X^{5/6}},\\[.15in]
\Vol(\nRR\,\!)&=&\displaystyle{\frac{128\zeta(2)\zeta(3)}{15}X^{5/6}},
\end{array}
\end{equation}
which along with (\ref{eqfinvol}) yields Theorem~\ref{thsec2main}.

\subsection{Congruence conditions}\label{congse}
In this subsection, we prove a version of Theorem \ref{thsec2main} where we
count integral ternary cubic forms satisfying any finite set of
congruence conditions.

For any set $S$ in $V_\Z$ that is definable by congruence conditions,
we denote by $\mu_p(S)$ the $p$-adic density of the $p$-adic closure
of $S$ in $V_{\Z_p}$, where we normalize the additive measure $\mu_p$
on $V_{\Z_p}$ so that $\mu_p(V_{\Z_p})=1$.  We then have the following
theorem whose proof is identical to that of \cite[Theorem 2.11]{BS}.
\begin{theorem}\label{cong2}
Suppose $S$ is a subset of $\pnv$ defined by finitely many
congruence conditions. Then we have 
\begin{equation}\label{ramanujan}
N(S\cap\pnv;X)
  = N(\pnv;X)
  \prod_{p} \mu_p(S)+o(X^{5/6}),
\end{equation}
where $\mu_p(S)$ denotes the $p$-adic density of $S$ in $V_\Z$, and
where the implied constant in $o(X^{5/6})$ depends only on $S$.
\end{theorem}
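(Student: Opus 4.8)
The plan is to pass from the single-lattice-translate count (\ref{oo}) to the product-of-local-densities formula (\ref{ramanujan}) in two moves: first re-express (\ref{oo}) in terms of $N(\pnv;X)$, and then recognize the arithmetic factor $k/m^{10}$ as a finite product of $p$-adic densities. Since $S$ is defined by congruence conditions modulo $m$, writing $S$ as a disjoint union of $k$ translates $\mathcal L_1,\dots,\mathcal L_k$ of $m\cdot V_\Z$ inside $V_\Z$, we already have from (\ref{oo}) that
\begin{equation*}
N(S\cap\pnv;X) = \frac{k\,\Vol(\pnRR)}{3m^{10}}+o(X^{5/6}).
\end{equation*}
Applying (\ref{oo}) in the special case $S=V_\Z$ (i.e.\ $m=1$, $k=1$), which is exactly (\ref{eqfinvol}), gives $N(\pnv;X)=\tfrac13\Vol(\pnRR)+o(X^{5/6})$, so that $\tfrac13\Vol(\pnRR)$ may be replaced by $N(\pnv;X)+o(X^{5/6})$ throughout; hence
\begin{equation*}
N(S\cap\pnv;X) = \frac{k}{m^{10}}\,N(\pnv;X)+o(X^{5/6}),
\end{equation*}
with the implied constant depending only on $S$ (through $m$ and $k$). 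This already contains all the analytic content; what remains is purely a bookkeeping identity.

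The second move is to identify $k/m^{10}$ with $\prod_p \mu_p(S)$. Because $S$ is cut out by congruences modulo $m$, the $p$-adic closure of $S$ in $V_{\Z_p}$ equals all of $V_{\Z_p}$ for every prime $p\nmid m$, so $\mu_p(S)=1$ for all but finitely many $p$ and the product is really finite. For $p\mid m$, writing $m=\prod_p p^{e_p}$ and using the Chinese Remainder Theorem to factor the reduction $V_\Z/mV_\Z\cong\prod_{p\mid m}V_{\Z}/p^{e_p}V_\Z$ (a product of groups each of rank $10$ over the respective residue ring), the image $\bar S$ of $S$ factors as a product $\prod_{p\mid m}\bar S_p$ with $\#\bar S_p$ equal to $p^{10 e_p}\mu_p(S)$ by the very definition of the normalized density $\mu_p$. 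Since $k=\#\bar S=\prod_{p\mid m}\#\bar S_p$ and $m^{10}=\prod_{p\mid m}p^{10 e_p}$, we get $k/m^{10}=\prod_{p\mid m}\mu_p(S)=\prod_p\mu_p(S)$, and substituting this into the displayed formula above yields (\ref{ramanujan}).

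There is no serious obstacle here: all the hard work — the reduction theory, the cutting-off of the cusp via Lemmas~\ref{lemnoirred} and \ref{lemfewred}, the volume computation, and the observation (in \S\ref{congse}) that rescaling the lattice by $m$ rescales every $d$-dimensional volume by $m^{-d}$ so that the error term $O(\lambda^{24}s_1^6 s_2^3)$ is unaffected in order of magnitude — has already been carried out in the derivation of (\ref{oo}). The only point requiring a word of care is the uniformity of the error term: since we sum $k$ contributions each of which is $o(X^{5/6})$ with implied constant depending on $m$ (equivalently on the lattices $\mathcal L_j$), the total error is $o(X^{5/6})$ with implied constant depending only on $S$, exactly as claimed. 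Thus the proof of Theorem~\ref{cong2} amounts to assembling (\ref{oo}) with the elementary density identity $k/m^{10}=\prod_p\mu_p(S)$.
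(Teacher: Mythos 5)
Your proposal is correct and follows the paper's own argument: the paper likewise deduces Theorem~\ref{cong2} directly from (\ref{oo}) together with the identity $km^{-10}=\prod_p\mu_p(S)$, which you verify via the Chinese Remainder Theorem. The only difference is that you spell out the elementary density identity that the paper merely asserts.
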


We will also have occasion to use the following weighted version of
Theorem \ref{cong2}; the proof is identical to that of \cite[Theorem
  2.12]{BS}.

\begin{theorem}\label{cong3}
  Let $p_1\ldots,p_k$ be distinct prime numbers. For $j=1,\ldots,k$,
  let $\phi_{p_j}:V_\Z\to\R$ be a $\SL_3(\Z)$-invariant function on
  $V_\Z$ such that $\phi_{p_j}(f)$ depends only on the congruence
  class of $f$ modulo some power $p_j^{a_j}$ of $p_j$.  Let
  $N_\phi(V_\Z^{\pm};X)$ denote the number of strongly irreducible
  $\SL_3(\Z)$-orbits in $V_\Z^{\pm}$ having height bounded by $X$,
  where each orbit $\SL_3(\Z)\cdot f$ is counted with weight
  $\phi(f):=\prod_{j=1}^k\phi_{p_j}(f)$. Then we have
\begin{equation}
N_\phi(V_\Z^{\pm};X)
  = N(V_\Z^{\pm};X)
  \prod_{j=1}^k \int_{f\in V_{\Z_{p_j}}}\tilde{\phi}_{p_j}(f)\,df+o(X^{5/6}),
\end{equation}
where $\tilde{\phi}_{p_j}$ is the natural extension of ${\phi}_{p_j}$
to $V_{\Z_{p_j}}$ by continuity, $df$ denotes the additive
measure on $V_{\Z_{p_j}}$ normalized so that $\int_{f\in
  V_{\Z_{p_j}}}df=1$, and where the implied constant in the error term
depends only on the local weight functions ${\phi}_{p_j}$.
\end{theorem}

\subsection[The number of reducible points in the main bodies of the
  fundamental domains is negligible]{The number of reducible points
    and points with large stabilizers in
  the main bodies of the fundamental domains is negligible}

In this section, we prove Lemma \ref{lemfewred}, i.e., that the number
of $\SL_3(\Z)$-orbits of reducible elements in $V_\Z$ of bounded height is
negligible.  Via a similar argument, we also show that the
number of $\SL_3(\Z)$-orbits of strongly irreducible elements in
$V_\Z$ having nontrivial stabilizer in $\PGL_3(\Q)$ and bounded
height is negligible. We use the technique in the proof of \cite[Lemma~14]{dodpf}.

\vspace{.1in}
\noindent {\bf Proof of Lemma \ref{lemfewred}:}
  Suppose $f$ is an integral ternary cubic form. If $f$ has a rational
  flex in $\P^2$, then for any prime $p$, the
  reduction $\bar{f}$ of $f$ modulo $p$ has a point of inflection in
  $\P^2(\F_p)$. Now let $p$ be a 
prime that is congruent to $1$ (mod
  $3$), and let $a$, $b$, $c$ be elements in $\F_p^\times$ that are in
  different cube classes (i.e., none of $a/b$, $b/c$, $c/a$ are cubes
  in $\F_p^\times$). Then one easily checks that the ternary cubic
  form $f_{a,b,c}(x,y,z)=ax^3+by^3+cz^3\in V_{\F_p}$ has no point of
  inflection in $\F_p$. Hence none of the forms in the set
  $S_p=\{\gamma\cdot f_{a,b,c}:\gamma\in\GL_3(\F_p)\}$ contain points
  of inflection in $\F_p$. It is clear that $\#S_p\gg p^9$, where the
  implied constant is independent of $p$. Thus, if $s_p$ denotes the
  $p$-adic density of the set of elements in $V_\Z$
  whose reduction modulo $p$ is contained in $S_p$, then $s_p\gg
  p^9/p^{10}=1/p$, where the implied constant is independent of $p$.
  Therefore, for any $Y>0$, we have
\begin{equation}\label{eqsec2lempr}
\int_{na(s_1,s_2)\lambda k\in\FF'}\#\{V_\Z^{\red}\cap
B(n,s_1,s_2, \lambda,X)\}s_1^{-6}s_2^{-6}dn\, d^\times t\,d^\times
\lambda dk\ll X^{5/6}\prod_{\substack{p\equiv 1\!\!\!\!\pmod{3}\\p\leq Y}}(1-s_p).
\end{equation}
Since $s_p\gg 1/p$, it follows that $\prod_{p\equiv
  1\!\!\!\pmod{3}}(1-s_p)$ diverges, and hence the left hand side of
(\ref{eqsec2lempr}) is $o(X^{5/6})$ as required. $\Box$ \vspace{2 ex}

We may use the same method to bound the number of $\SL_3(\Z)$-orbits
on strongly irreducible integral ternary cubic forms having bounded
height and nontrivial stabilizer in $\PGL_3(\Q)$, i.e., integral
ternary cubic forms of bounded height whose associated cubic curves
have no rational flex in $\P^2$, but whose Jacobians possess a
nontrivial $3$-torsion point defined over $\Q$.
\begin{lemma}\label{lemtemp2}
Let $V_\Z^\bigstab$ denote the set of elements in $V_\Z$ whose stabilizer in $\PGL_3(\Q)$ is nontrivial.
Then $N(V_\Z^\bigstab;X)=o(X^{5/6})$.
\end{lemma}
\begin{proof}
  By Equation (\ref{eqavgimp}) and Lemma \ref{lemnoirred}, it suffices 
  to prove the estimate
  \begin{equation}\label{eqlemred2}
    \int_{na(s_1,s_2)\lambda k\in\FF'}\#\{V_\Z^{\bigstab}\cap B(n,s_1,s_2,
    \lambda,X)\}s_1^{-6}s_2^{-6}dn\, d^\times t\,d^\times \lambda dk=o(X^{5/6}).
  \end{equation}
  The Jacobian of a form $f\in V_\Z$ may be embedded in $\P^2$ as a
  Weierstrass elliptic curve $\Jac(f)$ via the equation
  \begin{equation*}
    y^2z=x^3-\frac{I}{3}xz^2-\frac{J}{27}z^3,
  \end{equation*}
  and under this embedding, the $3$-torsion points of the Jacobian of
  $f$ are precisely the flex points in $\P^2$ of the curve $\Jac(f)$.
  Thus an integral ternary cubic form $f$ is contained in
  $V_\Z^{\bigstab}$ if and only if the curve $\Jac(f)$ contains at least
  two rational flex points in $\P^2$. The proof of the estimate in
  (\ref{eqlemred2}) now proceeds very similarly to that of Lemma
  \ref{lemfewred}. The only difference is that we now consider, for
  each $p\equiv 7$ (mod 12), the form $f_b(x,y,z)=x^3+bz^3-y^2z\in V_{\F_p}$, where
  $b$ is a nonresidue in $\F_p$. We see that $\Jac(f)$ is then
  precisely the curve
  defined by the equation $f=0$, and it has exactly one inflection
  point in $\P^2(\F_p)$, namely the point $[0:1:0]$.
\end{proof}

\subsection[The average number of strongly irreducible integral
  ternary cubic forms with given invariants]{The average number of strongly irreducible integral
  ternary cubic forms with given invariants (Proofs of Theorems
  \ref{eligible} and \ref{bqaverage})}

We first prove Theorem \ref{eligible} by describing the set of
{\it eligible} pairs $(I,J)\in\frac{1}{16}\Z\times\frac{1}{32}\Z$, i.e., those pairs that occur as
invariants of integral ternary cubic forms. We begin by showing that a
pair $(I,J)$ is eligible if and only if it occurs as the invariants
of a Weierstrass elliptic curve over $\Z$:
\begin{proposition}\label{thinttcjac}
  A pair $(I,J)$ is eligible if and only if it occurs as the
  invariants of some Weierstrass cubic over $\Z$, where a Weierstrass
  cubic over $\Z$ is an element in $V_\Z$ of the form
$$y^2z+a_1xyz+a_3yz^2-x^3-a_2x^2z-a_4xz^2-a_6z^3.$$
\end{proposition}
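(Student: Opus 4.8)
The plan is to prove the two implications separately; the backward one is short, and essentially all the work lies in the forward direction.

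For the backward implication, a Weierstrass cubic over $\Z$ is by construction an element of $V_\Z$, so its pair of invariants is automatically eligible. It is worth recording what that pair is: completing the square in $y$ and then the cube in $x$ — by the evident unipotent (hence $\SL_3$, hence $I,J$-preserving) substitutions — carries $y^2z+a_1xyz+a_3yz^2-x^3-a_2x^2z-a_4xz^2-a_6z^3$, up to the harmless sign change $f\mapsto-f$, into the shape (\ref{eqwf}) with $A=-c_4/48$ and $B=-c_6/864$, where $c_4,c_6$ are the usual covariants of the Weierstrass equation; since a form of shape (\ref{eqwf}) has invariants $(-3A,-27B)$, this gives $I=c_4/16$ and $J=c_6/32$. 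In particular an integral Weierstrass cubic produces a pair in $\tfrac1{16}\Z\times\tfrac1{32}\Z$, the shape of eligible pairs asserted in Theorem~\ref{eligible}.

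For the forward implication, let $f\in V_\Z$ have invariants $(I,J)$, with $\Delta(f)=(4I^3-J^2)/27\neq0$ (the case $\Delta(f)=0$ being degenerate and treated separately). By the computation just made, what must be produced is an integral Weierstrass equation whose covariants satisfy $c_4=16I$ and $c_6=32J$; its underlying curve will then automatically be the Jacobian of $C_f$ given by (\ref{jac}). The first step is purely a matter of the explicit formulas: a direct check shows that $16I$ and $32J$ are polynomials with integer coefficients in the ten coefficients of $f$, so $16I,32J\in\Z$. The second step is to realize the pair $(16I,32J)$, whose associated Weierstrass discriminant is the nonzero integer $16\,\Delta(f)$, by an integral Weierstrass equation; by a criterion of Kraus, a pair $(c_4,c_6)\in\Z\times\Z$ with nonzero integral Weierstrass discriminant is realized by an integral Weierstrass equation if and only if it is so over $\Z_p$ for every prime $p$, the $p$-adic condition being vacuous for $p\geq5$.

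Thus the remaining content lies entirely at $p=2$ and $p=3$, where one must check that $(16I,32J)$ satisfies Kraus's congruence conditions modulo a suitable power of $2$, respectively of $3$; these I would extract from $f$ itself, by passing to a suitable $\GL_3(\Z_p)$-translate of $f$ in $V_{\Z_p}$ that is in, or close to, Weierstrass form over $\Z_p$ — when the mod-$p$ reduction of $C_f$ has a smooth $\F_p$-point, Hensel's lemma lets one lift it and normalize to an honest Weierstrass cubic over $\Z_p$ — and then computing the invariants of this representative modulo $64$, respectively modulo $27$, which then yield exactly the congruences demanded by Kraus.

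The delicate part — the main obstacle — is precisely this last analysis at $p=2$ (and, to a lesser degree, $p=3$). Over $\R$ every ternary cubic is $\SL_3(\R)$-equivalent to a Weierstrass form because a real plane cubic always has a real flex; but over $\Q_2$ the nine flexes of $C_f$ may form Galois orbits with no rational point, and indeed $C_f$ need not possess any $\Q_2$-point at all, so $f$ cannot in general be moved into Weierstrass form over $\Z_2$. One is therefore forced to control the $2$-adic valuations and residues of $I(f)$ and $J(f)$ directly from the integral coefficients of $f$, a finite but genuinely intricate computation modulo $64$ — precisely the computation that produces the enlarged list of $2$-adic possibilities (the denominators $\tfrac1{16},\tfrac1{32}$ and the ten congruences modulo $64$) appearing in Theorem~\ref{eligible}.
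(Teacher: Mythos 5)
Your backward implication and the identification $I=c_4/16$, $J=c_6/32$ are fine, but the forward implication has a genuine gap, and it sits exactly where you say the "main obstacle" is. Your plan is: show $16I,32J\in\Z$, then invoke Kraus's criterion, which requires verifying his congruence conditions on $(16I,32J)$ modulo powers of $2$ and $3$. You never carry out that verification, and the mechanism you propose for it --- moving $f$ by an element of $\GL_3(\Z_p)$ into (or near) Weierstrass form over $\Z_p$ via a smooth point and Hensel's lemma --- fails precisely in the hard case, as you yourself observe: over $\Q_2$ the curve $C_f$ may have no rational point at all, so no such normalization exists. What remains is the assertion that a "finite but genuinely intricate computation modulo $64$" would close the gap; that is a plan, not a proof, and note that your route also inverts the paper's logic: the congruence conditions of Theorem~\ref{eligible} are supposed to be a \emph{consequence} of this proposition together with Kraus's criterion (Proposition~\ref{thkr}), whereas you need them as an \emph{input}. (You also set aside $\Delta(f)=0$ without handling it, and Kraus's criterion as stated assumes nonzero discriminant.)

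The paper's proof avoids all local analysis. By a result of Artin--Rodriguez-Villegas--Tate \cite{ATR}, every integral ternary cubic form $f\in V_\Z$ has an explicitly constructed Weierstrass model $f^*$ of its Jacobian whose coefficients are universal integer polynomials in the coefficients of $f$; in particular $f^*$ is a Weierstrass cubic over $\Z$, and one checks from the formulas of \cite{ATR} that $I(f^*)=I(f)$ and $J(f^*)=J(f)$. The forward implication is then immediate: any eligible $(I,J)$ is realized by the integral Weierstrass cubic $f^*$. This works uniformly (no case split at $p=2,3$, no discriminant hypothesis) and is what makes the subsequent application of Kraus's theorem to deduce Theorem~\ref{eligible} non-circular. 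To repair your argument you would need either to import this construction or to actually perform the exhaustive residue computation of $16I$ and $32J$ over $V_{\Z/64\Z}$ and $V_{\Z/27\Z}$; as written, the key step is missing.
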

\begin{proof}
  This proposition is easily deduced from the results in \cite{ATR}. To
  any integral ternary cubic form $f\in V_\Z$, one may associate a
  Weierstrass ternary cubic form $f^*$ {over $\Z$} which defines
  the Jacobian curve (see \cite[Equation 1.5]{ATR}). 
  The invariants $c_4(f)$ and $c_6(f)$ of $f$ are then defined to be
  equal to the classical invariants $c_4(f^*)$ and $c_6(f^*)$ of the
  corresponding Weierstrass cubic (see \cite{Sil} for a definition of $c_4(f^*)$ and
  $c_6(f^*)$). Using \cite[Equation 1.7]{ATR}, we easily check that
  our invariants $I(f)$ and $J(f)$ are equal to the invariants $c_4(f)/16$ and
  $c_6(f)/32$, respectively, for any ternary cubic form $f$. We conclude
  that $I(f)=I(f^*)$ and $J(f)=J(f^*)$, as desired. 
\end{proof}

Next, we have a result of Kraus (see \cite[Proposition 2]{Kr}) which
describes those pairs $(c_4,c_6)$ that can occur for a
Weierstrass cubic over $\Z$:
\begin{proposition}\label{thkr}
  Let $c_4$ and $c_6$ be integers. In order for there to exist a
  Weierstrass cubic over $\Z$ having nonzero discriminant and invariants
  $c_4$ and $c_6$, it is necessary and sufficient that
  \begin{itemize}
  \item[{\rm (a)}]$(c_4^3-c_6^2)/1728$ is a nonzero integer$;$
  \item[{\rm (b)}]$c_6\not\equiv \pm9\pmod{27};$
  \item[{\rm (c)}]either $c_6\equiv -1\pmod 4$, or $c_4\equiv 0
    \pmod{16}$ and $c_6\equiv 0,8\pmod{32}.$
  \end{itemize}
\end{proposition}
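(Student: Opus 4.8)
The plan is to prove Proposition~\ref{thkr} by translating the existence of an integral Weierstrass cubic with invariants $(c_4,c_6)$ into a finite list of congruence conditions modulo powers of $2$ and $3$, using the classical derived invariants. Write a Weierstrass equation over $\Z$ as $y^2+a_1xy+a_3y=x^3+a_2x^2+a_4x+a_6$ and recall $b_2=a_1^2+4a_2$, $b_4=2a_4+a_1a_3$, $b_6=a_3^2+4a_6$, with $c_4=b_2^2-24b_4$ and $c_6=-b_2^3+36b_2b_4-216b_6$. Solving the last two relations for $b_4,b_6$ gives
$$b_4=\frac{b_2^2-c_4}{24},\qquad b_6=\frac{b_2^3-3b_2c_4-2c_6}{432}.$$
Conversely, choosing $a_1\in\{0,1\}$ and an integer $b_2$ with $b_2\equiv a_1\pmod 2$ fixes $a_2=(b_2-a_1)/4$, and then $b_4,b_6$ are the rationals displayed above; an integral Weierstrass cubic with these invariants exists if and only if, for some such choice of $a_1$ and $b_2$, the numbers $b_4$ and $b_6$ are integers and can be written as $2a_4+a_1a_3$ and $a_3^2+4a_6$ for integers $a_3,a_4,a_6$ --- the latter being a congruence condition on $b_4$ and $b_6$ modulo $4$. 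Since $24=2^3\cdot 3$ and $432=2^4\cdot 3^3$ involve only $2$ and $3$, the whole question lives modulo a fixed power of $6$; in particular no condition arises at any prime $p\geq 5$.

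For \emph{necessity} I would expand $c_4$ and $c_6$ in terms of $b_2,b_4,b_6$ modulo $16$ and $32$ (respectively) and modulo $27$, and read off which residues can occur. Modulo $27$ one has $c_6\equiv -b_2^3+9b_2b_4\pmod{27}$ (as $216\equiv 0$ and $36\equiv 9$), and running over the residue of $b_2$ modulo $3$ shows $c_6$ never becomes $\equiv\pm 9\pmod{27}$, which is condition (b); condition (a) is just the requirement that the discriminant $\Delta=(c_4^3-c_6^2)/1728$ --- a $\Z$-polynomial in the $a_i$ --- be a nonzero integer (and it already forces, e.g., $c_4\not\equiv 2\pmod 3$, since then $c_4^3\equiv -1\pmod 9$ is not a square). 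At the prime $2$ one case-splits on the parity of $a_1$: if $a_1$ is odd then $b_2\equiv 1\pmod 4$, hence $c_6\equiv -b_2^3\equiv -1\pmod 4$; if $a_1$ is even then $4\mid b_2$ and $2\mid b_4$, which forces $c_4\equiv 0\pmod{16}$, while tracking $c_6\equiv -216b_6\equiv -24b_6\pmod{32}$ together with $b_6\equiv a_3^2\pmod 4$ gives $c_6\equiv 0,8\pmod{32}$. This is precisely condition (c).

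For \emph{sufficiency} I would run these computations backwards. Given $(c_4,c_6)$ satisfying (a), (b), (c), choose $a_1\in\{0,1\}$ according to which alternative of (c) holds, and then choose the residue class of $b_2$ modulo a suitable power of $6$ so that simultaneously $b_2\equiv a_1\pmod 2$, $24\mid b_2^2-c_4$, and $432\mid b_2^3-3b_2c_4-2c_6$; the $3$-part of these divisibilities can be arranged using $c_4\bmod 3$ (which is $0$ or $1$ by (a)) together with (b), and the $2$-part using (c). Fixing such a $b_2$ makes $b_4$ and $b_6$ honest integers, and a short parity check --- again split on the value of $a_1$ --- shows $b_4$ has the parity needed to solve $b_4=2a_4+a_1a_3$ and that $b_6$ lands in the correct residue class for the forced parity of $a_3$, so $a_3,a_4,a_6$ can be found in $\Z$. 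Finally $\Delta\neq 0$ by (a), so the Weierstrass cubic produced has nonzero discriminant.

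I expect the main obstacle to be the $2$-adic bookkeeping in both directions: one must keep precise track of $c_4$ modulo $16$ and $c_6$ modulo $32$ as functions of $a_1,a_2,a_3$, and verify that the two alternatives in (c) are not merely necessary but exactly exhaust the image --- in particular that in the ``$a_1$ even'' branch the coupled requirements $c_4\equiv 0\pmod{16}$ and $c_6\equiv 0,8\pmod{32}$ can always be met together, and that choosing $b_2$ only modulo a power of $6$ genuinely suffices to produce an honest integral equation. This routine but somewhat lengthy verification is exactly the content of \cite[Proposition 2]{Kr}, which I would ultimately invoke; the sketch above indicates how the argument there proceeds.
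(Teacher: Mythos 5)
Your proposal is correct and matches the paper's treatment: the paper gives no proof of this proposition, stating it as a result of Kraus and citing \cite[Proposition 2]{Kr}, which is exactly the reference you ultimately invoke, and your sketch of the underlying $2$- and $3$-adic computation with the $b_i$ is an accurate outline of how Kraus's argument runs. One small slip: for $a_2=(b_2-a_1)/4$ to be an integer you need $b_2\equiv a_1^2\pmod 4$, not merely $b_2\equiv a_1\pmod 2$.
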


It can be checked that the set of pairs $(I,J)$ that satisfy the
congruence conditions of Theorem~\ref{eligible} is the same as the set
of pairs $(c_4/16,c_6/32)$ for which the congruence conditions of
Proposition~\ref{thkr} are satisfied for $(c_4,c_6)$. Thus
Theorem~\ref{eligible} follows from Propositions~\ref{thinttcjac} and
\ref{thkr}, and the fact that $I(f)=c_4(f)/16$ and $J(f)=c_6(f)/32$
for Weierstrass cubics $f$ having integral coefficients.

The next lemma follows immediately from Theorem \ref{eligible}.
\begin{lemma}\label{elij}
  The set of all eligible $(I,J)$ is a
  union of $144$ distinct translates of $36\Z\times27\Z$ in $\frac1{16}\Z\times\frac1{32}\Z$.
\end{lemma}
\begin{proposition}\label{propijcount}
  Let $N^+_{I,J}(X)$ and $N^-_{I,J}(X)$ denote the number of eligible
  pairs $(I,J)\in\frac1{16}\Z\times\frac1{32}\Z$ satisfying $H(I,J)<X$
  that have positive discriminant and negative discriminant,
  respectively. Then
  \begin{itemize}
  \item[{\rm (a)}]$\displaystyle{N^+_{I,J}(X)=\frac{32}{135}X^{5/6}+O(X^{1/2})};$
  \item[{\rm (a)}]$\displaystyle{N^-_{I,J}(X)=\frac{128}{135}X^{5/6}+O(X^{1/2})}.$
  \end{itemize}
\end{proposition}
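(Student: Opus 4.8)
The plan is to reduce Proposition~\ref{propijcount} to a count of lattice points of $\Lambda:=\frac1{16}\Z\times\frac1{32}\Z$ in the two semialgebraic regions
$$R^{\pm}_X:=\{(I,J)\in\R\times\R:H(I,J)<X,\ \pm\,\Delta(I,J)>0\},$$
subject to the congruence conditions of Theorem~\ref{eligible}. By that theorem the eligible set is precisely the set of $(I,J)\in\Lambda$ for which $(16I,32J)$ lies in a prescribed union of residue classes modulo~$64$ and $(I,J)$ lies in a prescribed union of residue classes modulo~$27$; since $16$ and $32$ are units modulo~$27$ (and $16I,32J$ are automatically integral), this is genuinely a union of cosets of a finite-index sublattice of $\Lambda$. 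So the first step is to compute the density of this eligible set in $\Lambda$.

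To do so I would simply enumerate the residues listed in Theorem~\ref{eligible}. Modulo~$64$: conditions (a) and (b) each allow $4$ values of $16I$ and $2$ of $32J$, hence $8$ pairs apiece, while each of (c)--(j) allows $1$ value of $16I$ and $2$ of $32J$, hence $2$ pairs apiece; these ten conditions are pairwise disjoint, since (a),(b) force $16I\equiv 0\pmod{16}$ while (c)--(j) force distinct odd residues of $16I$ modulo $64$. This gives $8+8+8\cdot 2=32$ admissible classes out of $64^2$. Modulo~$27$: condition (a) allows $9$ pairs and each of (b),(c),(d) allows $3\cdot 2=6$ pairs, again pairwise disjoint (as one sees by looking at the residue of $I$ modulo $3$ and modulo $9$), for a total of $9+6+6+6=27$ classes out of $27^2$. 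By the Chinese Remainder Theorem the $2$-adic and $3$-adic conditions are independent, so the eligible set has density $\frac{32}{64^2}\cdot\frac{27}{27^2}=\frac{1}{128}\cdot\frac1{27}=\frac1{3456}$ in $\Lambda$. As $\Lambda$ has covolume $\frac1{16}\cdot\frac1{32}=\frac1{512}$, the eligible pairs therefore occur with ``density'' $\frac{512}{3456}=\frac{4}{27}$ per unit area in the $(I,J)$-plane.

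The second step is to read off the areas of $R^{+}_X$ and $R^{-}_X$: the condition $H(I,J)<X$ is exactly $|I|<X^{1/3}$ and $|J|<2X^{1/2}$, so the region $H(I,J)<X$ is the open box of area $8X^{5/6}$, its $\Delta>0$ part is the region $0<I<X^{1/3}$, $|J|<2I^{3/2}$ of area $\frac85X^{5/6}$ (computed already in (\ref{volrplus})), and its $\Delta<0$ part has area $8X^{5/6}-\frac85X^{5/6}=\frac{32}5X^{5/6}$, exactly as in (\ref{volrminus}) (the locus $\Delta=0$ is one-dimensional and contributes nothing). The third step is to apply Proposition~\ref{lemdav} (or, equally, the classical ``area plus boundary'' estimate for a fixed lattice) to each coset of the eligible sublattice intersected with $R^{\pm}_X$: these regions are bounded and semialgebraic of bounded degree, and their projections onto the two coordinate axes have lengths $O(X^{1/3})$ and $O(X^{1/2})$, so the number of lattice points on each coset is its area divided by the covolume plus $O(X^{1/2})$. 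Summing over the constantly many cosets yields
$$N^{\pm}_{I,J}(X)=\frac{4}{27}\cdot\mathrm{area}(R^{\pm}_X)+O(X^{1/2}),$$
which is $N^{+}_{I,J}(X)=\frac{4}{27}\cdot\frac85X^{5/6}+O(X^{1/2})=\frac{32}{135}X^{5/6}+O(X^{1/2})$ and $N^{-}_{I,J}(X)=\frac{4}{27}\cdot\frac{32}5X^{5/6}+O(X^{1/2})=\frac{128}{135}X^{5/6}+O(X^{1/2})$.

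The only delicate point is the residue-class bookkeeping in the first two steps --- in particular verifying that the ten conditions modulo $64$ and the four conditions modulo $27$ in Theorem~\ref{eligible} are genuinely mutually exclusive, so that the $2$-adic and $3$-adic densities multiply with no inclusion--exclusion correction; the remaining steps are the routine geometry-of-numbers estimate already used throughout this section.
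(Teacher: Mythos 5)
Your proof is correct and follows essentially the same route as the paper's: the paper likewise applies Proposition~\ref{lemdav} with the density constant $\frac{16\cdot 32}{128\cdot 27}=\frac{4}{27}$ and the volumes $\frac85 X^{5/6}$ and $\frac{32}{5}X^{5/6}$ from (\ref{volrplus}) and (\ref{volrminus}). Your explicit tally of the residue classes (32 out of $64^2$ at the prime 2, and 27 out of $27^2$ at the prime 3, with the disjointness checks) is exactly the bookkeeping the paper leaves implicit in that constant.
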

\begin{proof}
  Let $R_{I,J}^+(X)$ (resp.\ $R_{I,J}^-(X)$) denote the set of points $(i,j)\in
  \R\times\R$ satisfying $H(i,j)<X$ and $4i^3-j^2>0$ (resp.\
  $4i^3-j^2<0$). The
  sizes of the projections of $R_{I,J}^{\pm}(X)$ onto
  smaller-dimensional coordinate hyperplanes are all bounded by
  $O(X^{1/2})$.  Using Proposition~\ref{lemdav} and Lemma~\ref{elij}
  then gives
$$N^\pm_{I,J}(X)=\frac{144}{36\cdot
  27}\Vol(R_{I,J}^\pm(X))+O(X^{1/2}).$$
The volumes of the sets $R_{I,J}^+(X)$ and $R_{I,J}^-(X)$ have been computed in
(\ref{volrplus}) and (\ref{volrminus}) to be equal to $8/5$
and $32/5$, respectively. The proposition follows.
\end{proof}

Theorem~\ref{bqcount} combined with Proposition \ref{propijcount}
now yields Theorem \ref{bqaverage}.

\subsection{Uniformity estimates and a squarefree sieve}

For our applications, we require a general version of Theorem
\ref{cong3}, namely one that counts weighted ternary cubic forms where
the weight functions are defined by appropriate infinite sets of
congruence conditions. 
A function $\phi:V_{\Z}\to[0,1]\in\R$ is said to be {\it defined
  by congruence conditions} if, for all primes $p$, there exist
functions $\phi_p:V_{\Z_p}\to[0,1]$ satisfying the following
conditions:
\begin{itemize}
\item[(1)] For all $f\in V_\Z$, the product $\prod_p\phi_p(f)$ converges to $\phi(f)$.
\item[(2)] For each prime $p$, the function $\phi_p$ is 
locally constant outside some closed set $S_p \subset V_{\Z_p}$ of measure zero.
\end{itemize}
We say that such a function $\phi$ is {\it acceptable} if for
sufficiently large primes $p$, we have $\phi_{p}(f)=1$ whenever
$p^2\nmid\Delta(f)$.

Our purpose in this section is to prove the following generalization of Theorem \ref{cong3}.
\begin{theorem}\label{thsqfreetc}
  Let $\phi:V_\Z\to[0,1]$ be an acceptable function that is defined by
  congruence conditions via the local functions $\phi_{p}:V_{\Z_p}\to[0,1]$. Then, with
  notation as in Theorem~$\ref{cong3}$, we have:
\begin{equation}
N_\phi(V_\Z^\pm;X)
  = N(V_\Z^\pm;X)
  \prod_{p} \int_{f\in V_{\Z_{p}}}\phi_{p}(f)\,df+o(X^{5/6}).
\end{equation}
\end{theorem}

To prove Theorem~\ref{thsqfreetc}, we need the following tail estimate.
\begin{proposition}\label{propuniftc}
  Let $\W_p(V)$ denote the set of ternary cubic forms $f$ such that $p^2\mid\Delta(f)$. Then, for any fixed $\epsilon>0$, we have
\begin{equation}\label{tailestimate}
N(\cup_{p>M}\W_p;X)=O_\epsilon(X^{5/6}/(M\log M)+X^{3/4})+O(\epsilon X^{5/6}).
\end{equation}
\end{proposition}
\begin{proof}
  Let $\W_p^{(1)}$ denote the set of ternary cubic forms such that
  $p^2\mid \Delta(f)$ for ``mod $p$ reasons'', i.e., $p^2\mid
  \Delta(g)$ for every $g\equiv f\pmod{p}$. For any $\epsilon>0$, let
  $\FF_{\PGL_3}^{(\epsilon)}\subset\FF_{\PGL_3}$ denote the subset of
  elements $na(s_1,s_2)k\in\FF_{\PGL_3}$ such that $s_1$ and $s_2$ are
  bounded above by an appropriate constant to ensure that
  $\Vol(\FF_{\PGL_3}^{(\epsilon)})=(1-\epsilon)\Vol(\FF_{\PGL_3})$. Then
  $\FF_{\PGL_3}^{(\epsilon)}\cdot R^\pm(X)$ is a bounded domain in
  $V_\R$ that expands homogeneously with
  $X$. By~\cite[Theorem~3.3]{geosieve}, we have
\begin{equation}\label{e1}
\#\{\FF_{\PGL_3}^{(\epsilon)}\cdot R^\pm(X)\bigcap (\cup_{p>M}\W^{(1)}_p)\}=O(X^{5/6}/(M\log M)+X^{9/12}).
\end{equation}
Furthermore, the results of \S\ref{s21} and \S\ref{s22} imply that
\begin{equation}\label{e2}
\#\{(\FF_{\PGL_3}\backslash\FF_{\PGL_3}^{(\epsilon)})\cdot R^\pm(X)\bigcap V_\Z^\irr\}=O(\epsilon X^{5/6}).
\end{equation}
Combining the two estimates (\ref{e1}) and (\ref{e2}) 
yields \eqref{tailestimate} with $\W_p$
replaced with $\W^{(1)}_p$.

Next, suppose $f$ belongs to $\W^{(2)}_p:=\W_p\backslash\W^{(1)}_p$. Let
$\bar{f}$ denote the reduction of $f$ modulo $p$. Then the curve
$C\subset\P^2_{\F_p}$ defined by $\bar{f}(x,y,z)=0$ contains a single
nodal singularity. This singularity must be $\F_p$-rational and we can
move it to $[0:0:1]$ using an element of $\SL_3(\F_p)$. In that case,
the $z^3$-, $xz^2$-, and $yz^2$-coefficients of $\bar{f}(x,y,z)$ are
zero. Evaluating the discriminant of an element $f$ that reduces mod
$p$ to such an $\bar{f}$, we see that $\Delta(f)\equiv
cG(f)\pmod{p^2}$, where $c$ is the coefficient of $z^3$ and $G(f)$ is
an irreducible polynomial in the coefficients of $f$. As $f\in
\W_p^{(2)}$, we see that $G(f)\not\equiv0\pmod{p}$. Therefore, since
$p^2\mid \Delta(f)$, we obtain that $p^2\mid c$, the coefficient of
$z^3$. Now the element $g$ defined by
\begin{equation}\label{eqmatgamma}
    \left(\begin{smallmatrix}
      1&&\\&1&\\&&p^{-1}
    \end{smallmatrix}\right)\cdot pf
\end{equation}
has the same discriminant as $f$ and is in $\W_p^{(1)}$, because its $x^3$-, $x^2y$-, $xy^2$-, and $y^3$-coefficients are zero modulo~$p$. We therefore
obtain a discriminant preserving map $\phi$ from $\SL_3(\Z)$-orbits on
$\W_p^{(2)}$ to $\SL_3(\Z)$-orbits on $\W_p^{(1)}$. The following lemma
states that this map is at most $3$ to $1$:
\begin{lemma}
Given an $\SL_3(\Z)$-orbit on $\W_p^{(1)}$, there are at most three
$\SL_3(\Z)$-orbits on $\W_p^{(2)}$ that map to it under $\phi$.
\end{lemma}
\begin{proof}
If the reduction of $f\in\W^{(2)}_p$ modulo $p$ has a nodal
singularity at $[0:0:1]\in\P^2(\F_p)$, then the form in $W_p^{(1)}$ given by 
\eqref{eqmatgamma}, when reduced modulo $p$, has $z$ as a factor. 
Moreover, for $g\in W_p^{(1)}$, the form
\begin{equation}\label{eqmatgammainverse}
    \left(\begin{smallmatrix}
      1&&\\&1&\\&&p
    \end{smallmatrix}\right)\cdot p^{-1}g
\end{equation}
can be integral only if the $x^3$-, $x^2y$-, $xy^2$-, and
$y^3$-coefficients of $g$ are zero modulo $p$.  Therefore, the preimages
under $\phi$ of the $\SL_3(\Z)$-orbit of $g\in\W_p^{(1)}$ are
associated to 
linear factors of the reduction of $g$ modulo $p$. The reduction of
$g$ modulo $p$ has at most $3$ linear factors, unless $g\equiv
0\pmod{p}$. However, if $g\equiv 0\pmod{p}$, then it is easy to see that
\eqref{eqmatgammainverse} belongs to $\W_p^{(1)}$. Thus, the map
$\phi:\SL_3(\Z)\backslash\W_p^{(2)}\to \SL_3(\Z)\backslash\W_p^{(1)}$
is at most $3$ to~$1$, and the lemma follows.
\end{proof}

Therefore, we obtain
\begin{equation}
N(\cup_{p>M}\W_p^{(2)};X)\leq
3N(\cup_{p>M}\W_p^{(1)};X)=O_\epsilon(X^{5/6}/(M\log M)+X^{3/4})+O(\epsilon X^{5/6}).
\end{equation}
This concludes the proof of the proposition.
\end{proof}

Theorem \ref{thsqfreetc} now follows from Proposition
\ref{propuniftc} just as 
\cite[Theorem 2.21]{BS} followed from \cite[Theorem~2.13]{BS}.

\section{The average number of elements in the $3$-Selmer groups of elliptic curves}

Recall that any isomorphism class of elliptic curve $E$ over $\Q$ has
a unique representative of the form
\begin{equation}\label{eqellip}
E(A,B): y^2=x^3+Ax+B,
\end{equation}
where $A,B\in\Z$ and for all primes $p$, we have $p^4\nmid A$ if
$p^6\mid B$.  For any elliptic curve $E(A,B)$ over $\Q$ written in the
form (\ref{eqellip}), we define the quantities $I(E)$ and $J(E)$ by
\begin{equation}I(E)=-3A,\end{equation} 
\begin{equation}J(E)=-27B.\end{equation} We denote the 
elliptic curve over $\Q$ 
having invariants $I$ and $J$ by $E^{I,J}$, and define its
{\it height} $H'(E^{I,J})$ by
$$H'(E^{I,J}):=\max\{|I^3|,J^2/4\}.$$ 
We use the height $H'$ instead of $H$ on elliptic curves to agree with
the height on integral ternary cubic forms defined in
\eqref{eqhttc}. Note that since $H$ and $H'$ agree up to a constant
factor, they induce the same ordering on the set of elliptic curves
over $\Q$.

In this section, we prove Theorem \ref{ellipcong} by computing the
average size of the $3$-Selmer group of elliptic curves over $\Q$,
whose coefficients satisfy finitely many congruence conditions, when
these curves are ordered by their heights. We also prove a theorem
where we bound the average size of the $3$-Selmer group of elliptic
curves in more general families. To define these families, we need the
following notation.  For each prime~$p$, let $\Sigma_p$ be a closed
subset of $\Z_p^2\backslash\{\Delta\neq 0\}$. We associate the family
$F_\Sigma$ of elliptic curves to $(\Sigma_p)_p$, where $E^{I,J}\in
F_\Sigma$ if $(I,J)\in\Sigma_p$ for all $p$. Such a family is said to
be {\it defined by congruence conditions}. We can also impose
``congruence conditions at infinity'' by insisting that $E^{I,J}\in
F_\Sigma$ if and only if $(I,J)\in\Sigma_\infty$, where
$\Sigma_\infty$ is equal to $\{(I,J)\in\R^2:\Delta(I,J)>0\}$,
$\{(I,J)\in\R^2:\Delta(I,J)<0\}$, or $\{(I,J)\in\R^2:\Delta(I,J)\neq
0\}$.

If $F$ is a family of elliptic curves defined by congruence
conditions, then let $\Inv(F)$ denote the set $\{(I(E),J(E)):E\in
F\}$.  For a prime $p$, let $\Inv_p(F)$ denote the $p$-adic closure of
$\Inv(F)$ in $\Z_p^2$. We define $\Inv_\infty(F)$ to be
$\{(I,J)\in\R^2:\Delta(I,J)>0\}$,
$\{(I,J)\in\R^2:\Delta(I,J)<0\}$, or $\{(I,J)\in\R^2:\Delta(I,J)\neq
0\}$ in accordance with whether $F$ contains curves only of positive
discriminant, negative discriminant, or both.  
A family $F$ of elliptic curves is then said to be
{\it large} if, for all but finitely many primes $p$, the set
$\Inv_p(F)$ contains all pairs $(I,J)\in\Z_p\times\Z_p$ such that
$p^2\nmid\Delta(I,J)$.

In this section, we prove the following theorem:
\begin{theorem}\label{ellipgen}
  When elliptic curves $E$ in any large family are ordered by
  height, the average size of the $3$-Selmer group $S_3(E)$ is $4$.
\end{theorem}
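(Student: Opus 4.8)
The plan is to reduce Theorem~\ref{ellipgen} to the counting results of Section~2 via the classical correspondence between $3$-Selmer elements and ternary cubic forms sketched in the Introduction. First I would set up the bijection carefully: by Cassels' theorem a locally soluble $3$-covering $C$ of $E=E(A,B)$ carries a rational divisor of degree $3$, hence embeds in $\P^2$ as a plane cubic, well-defined up to $\GL_3(\Q)$-equivalence; conversely a ternary cubic form $f$ with nonzero discriminant has Jacobian given by \eqref{jac}, so that $(I(f),J(f))$ determines the isomorphism class of $\Jac(C)$. The key point to establish is that each non-identity element of $S_3(E)$ can be represented by an \emph{integral} ternary cubic form $f$ whose invariants $(I(f),J(f))$ agree with $(I(E),J(E))=(-3A,-27B)$ away from the primes $2$ and $3$, and moreover that such $f$ is \emph{strongly irreducible}: indeed a rational flex on $C$ would, together with local solubility, force $C$ to have a rational point (the flex gives a rational divisor class of degree $1$), contradicting that $C$ represents a non-identity class. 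I would then make precise the $\GL_3(\Z)$-versus-$\SL_3(\Z)$ bookkeeping and the count of integral models per Selmer element, using Lemmas~\ref{lemstabsize} and \ref{lemabsirred} for the orbit/stabilizer accounting, and noting that the ``totally irreducible'' refinement handles the $3$-torsion of the Jacobian.

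Next I would carry out the sieve. The relevant set of integral ternary cubic forms — those representing locally soluble $3$-coverings of curves in the acceptable family $F$ — is cut out by \emph{infinitely many} congruence conditions (one at each prime: local solubility, plus the invariant-matching condition defining $F_p^\inv$, plus the minimality-of-$(A,B)$ condition). I would first apply Theorem~\ref{cong2} to the family defined by any \emph{finite} truncation of these conditions, getting an exact asymptotic $N(\pnv;X)\prod_p\mu_p(S)$, and then bound the tail. Since we only want an upper bound, a crude sieve suffices: for primes $p$ outside a finite bad set, acceptability guarantees that \emph{every} $3$-covering with $p^2\nmid\Delta$ is allowed, so only the primes with $p^2\mid\Delta(E)$ contribute a restriction, and the number of $E$ with $H(E)<X$ and $p^2\mid\Delta(E)$ for some $p>M$ is $o_M(X^{5/6})$ uniformly — a standard Davenport-type bound on squarefree-sieving that is already implicit in the $V_\Z^\red$ estimates of \S2.5. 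Combining the main term with this tail bound yields $\sum_{H(E)<X}(\#S_3(E)-1)\le N(\pnv;X)\prod_p\mu_p(S)+o(X^{5/6})$.

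The final step is the local mass computation: for each prime $p$ one must show $\sum_p \mu_p(S)$, suitably interpreted, gives exactly the ``$3$'' that makes the average Selmer size $1+3=4$. Concretely, I would compute, for a fixed elliptic curve $E/\Q_p$, the $\mu_p$-mass of integral ternary cubic forms with the correct invariants representing locally soluble $3$-coverings, and compare it against the count of $N(\pnv;X)$ versus the number of eligible pairs $(I,J)$ from Proposition~\ref{propijcount} and Theorem~\ref{bqaverage}; the point is that the product of all local masses, divided by the ``density of eligible invariants,'' equals $3\zeta(2)\zeta(3)/\bigl(3\zeta(2)\zeta(3)\bigr)\cdot 3 = 3$ — matching $\#S_3(E_p)=|E(\Q_p)/3E(\Q_p)|\cdot(\text{correction})$ so that the global average of $\#S_3(E)-1$ comes out to $3$. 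I expect the main obstacle to be precisely this local mass bookkeeping at $p=2$ and $p=3$: the invariants of a ternary cubic need only lie in $\tfrac1{16}\Z\times\tfrac1{32}\Z$ (Theorem~\ref{eligible}), there is a genuine mismatch between the ternary-cubic invariants and the minimal Weierstrass invariants $(A,B)$ at these primes, and one must check that the extra factors of $16$ and $32$, together with the congruence conditions mod $64$ and mod $27$, conspire to give exactly the right local mass so that the bound of $4$ is attained and not exceeded.
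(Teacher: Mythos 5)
Your architecture matches the paper's at the top level: represent non-identity Selmer elements by integral ternary cubics, count those with Theorem~\ref{cong2} plus a truncation (note that for the upper bound on the numerator one only needs finitely many congruence conditions at a time; the $p^2\mid\Delta$ uniformity estimate is actually needed for the \emph{lower} bound on the denominator, i.e.\ the count of curves in $F$), and evaluate a product of local masses. But the proposal stops exactly where the real content lies. The product of local densities does not evaluate itself: the paper converts $\mu_p(S^F)$, via the Jacobian change of variables of Proposition~\ref{jacmain}, into an integral over $(I,J)\in F_p^{\inv}$ of $\sum_{\sigma\in E_{I,J}(\Q_p)/3E_{I,J}(\Q_p)}1/\#E_{I,J}(\Q_p)[3]$ --- using that locally soluble forms over $\Q_p$ with given invariants are parametrized by $E(\Q_p)/3E(\Q_p)$ with stabilizer $E(\Q_p)[3]$ (Lemmas~\ref{leme2ep} and \ref{lemstab}) --- and then the inner sum equals $1$ for $p\neq 3$ and $3$ for $p=3$ by the identity $\#\bigl(E(\Q_p)/3E(\Q_p)\bigr)=\#E(\Q_p)[3]$ (times $3$ at $p=3$), i.e.\ Lemma~\ref{lembk} after Brumer--Kramer. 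That identity is the entire reason the average of $\#S_3(E)-1$ comes out to $3$ rather than some other constant, and your proposal never identifies it. Instead you locate the difficulty in the eligibility congruences of Theorem~\ref{eligible} at $p=2,3$, which in fact play no role in the Selmer count: by \cite{CFS} every $3$-Selmer element of $E_{A,B}$ is represented by an \emph{integral} cubic whose invariants equal $(-3A,-27B)$ exactly, not merely away from $2$ and $3$, so the fractional invariants never enter.

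Two further gaps. First, you take the plane cubic to be well-defined up to $\GL_3(\Q)$-equivalence, but that equivalence identifies $f$ with $-f$, i.e.\ a Selmer element with its inverse; to get a genuine bijection (and hence the constant $3$ rather than something off by a factor of $2$) one must pass to proper ($\PGL_3$) equivalence, as the paper does in Propositions~\ref{propselparq} and \ref{propselparz}. Second, your justification that non-identity classes yield strongly irreducible forms is not right as stated: a rational point on $C$ does not contradict $[C]\neq 0$ in $S_3(E)$, since soluble nontrivial coverings exist whenever $E$ has positive rank. The correct statement is that a rational \emph{flex} trivializes the covering: the flexes are precisely the fiber of the covering map over the origin of $E$ (they are the common zeros of $f$ and $\H(f)$, where $\H$ vanishes in (\ref{eqmapcurjac})), and a $3$-covering with a rational point above the origin is the identity element.
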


\subsection{Ternary cubic forms and elements in the $3$-Selmer groups of elliptic curves}\label{s31}

For a field $K$, we may define a twisted action of the group
$\GL_3(K)$ on the space $V_K$ of
ternary cubic forms having coefficients in $K$ via 
\begin{equation}
\gamma\cdot
f(x,y,z):=\det(\gamma)^{-1}f((x,y,z)\cdot\gamma),
\end{equation} which induces an
action of $\PGL_3(K)$ on $V_K$.   We say that a
ternary cubic form $f\in V_K$ is {\it $K$-soluble} if the equation
$f(x,y,z)=0$ has a nontrivial solution over $K$.
We then have the following result, which follows from
\cite[Theorem~2.5 and Remark~2.7]{Fisher2} (see also \cite[\S4.2]{BhHo}):
\begin{proposition}\label{propparamK}
  Let $K$ be a field having characteristic not equal to $2$ or $3$,
  and $E:y^2=x^3-\frac{I}{3}x-\frac{J}{27}$ be an elliptic curve over
  $K$.  Then there exists a natural injection
  $$\mathcal T_E\,:\,E(K)/3E(K) \,\rightarrow\, 
  \{\PGL_3(K)\textrm{-orbits of ternary cubic forms
    over}\;K\},$$ whose image consists exactly of the $K$-soluble
  ternary cubic forms having invariants equal to $I$ and~$J$.  
Under this correspondence, the identity element of $E(K)/3E(K)$
maps to the $\PGL_3(K)$-orbit of ternary cubic forms having a
$K$-rational point of inflection. 

  Furthermore, the stabilizer in $\PGL_3(K)$ of any ternary cubic form
  having invariants equal to~$I$ and~$J$ is isomorphic to $E(K)[3]$.
\end{proposition}

A rational ternary cubic form $f\in V_\Q$ is said to be {\it locally
  soluble} if $f$ is $\R$-soluble and $\Q_p$-soluble for all primes
$p$. We then have the following proposition (see
\cite[Remark~2.8]{Fisher2}):

\begin{proposition}\label{propselparq}
  Let $E/\Q$ be an elliptic curve. Then the elements in the $3$-Selmer
  group of $E$ are in bijective correspondence with
  $\PGL_3(\Q)$-orbits on the set of locally soluble ternary cubic
  forms in $V_\Q$ having invariants equal to $I(E)$ and $J(E)$.

  Furthermore, the set of all ternary cubic forms in $V_\Q$ having
  invariants equal to $I(E)$ and $J(E)$ that are not strongly
  irreducible lie in a single $\PGL_3(\Q)$-orbit, and this orbit
  corresponds to the identity element in the $3$-Selmer group of $E$.
\end{proposition}

By a result of Cremona, Fisher, and Stoll \cite[Theorem 1.1]{CFS}, any
rational ternary cubic form $f\in V_\Q$ having integral invariants $I$
and $J$ is $\SL_3(\Q)$-equivalent to an integral ternary cubic form
$g\in V_\Z$ having invariants $I$ and $J$. In particular, it follows
that such an $f$ is $\PGL_3(\Q)$-equivalent to either $g$ or
$-g$. Since $g$ and $-g$ have the same invariants, we obtain the
following proposition:

\begin{proposition}\label{propselparz}
  Let $E/\Q$ be an elliptic curve. Then the elements in the $3$-Selmer
  group of $E$ are in bijective correspondence with
  $\PGL_3(\Q)$-equivalence classes\footnote{We refer to the set of all
    $g\in V_\Z$ such that $g$ is $\PGL_3(\Q)$-equivalent to a fixed
    integral ternary cubic form $f$ as the {\it $\PGL_3(\Q)$-equivalence
    class} of $f$ in $V_\Z$.} of locally soluble integral ternary cubic
  forms in $V_\Z$ having invariants equal to $I(E)$ and $J(E)$.

  Furthermore, the set of all ternary cubic forms in $V_\Z$ having
  invariants equal to $I(E)$ and $J(E)$ that are not strongly
  irreducible lie in a single $\PGL_3(\Q)$-equivalence class, and this
  equivalence class corresponds to the identity element in the
  $3$-Selmer group of~$E$.
\end{proposition}

We may use Proposition \ref{propparamK} to
prove Lemma \ref{lemstabsize}, i.e., that the order of the stabilizer
of a real ternary cubic form of nonzero discriminant is always equal
to 3.
\\[.1in]
{\bf Proof of Lemma \ref{lemstabsize}:} Let
$f\in V_\R$ be a ternary cubic form having nonzero discriminant. By
Proposition~\ref{propparamK}, if $f$ has invariants $I$ and $J$, then the size
of the stabilizer of $f$ in $\PGL_3(\R)$ is equal to the number of
real $3$-torsion points on the elliptic curve $E^{I,J}/\R$ having invariants
$I$ and $J$. Now the $3$-torsion points of a plane Weierstrass 
elliptic curve are its
flex points, and it is known 
that
any plane cubic curve over $\R$ having nonzero discriminant
has exactly $3$ flex points defined over $\R$ (see, e.g.,
\cite[Chapter~13]{Gib}). Furthermore, if
$\gamma\in\GL_3^+(\R)$ stabilizes $f$, then $I(\gamma\cdot f)=(\det
\gamma)^4I(f)$ and $J(\gamma\cdot f)=(\det \gamma)^6J(f)$, and so $\det
\gamma=1$. This completes the proof of Lemma~\ref{lemstabsize}.
$\Box$\vspace{.1 in}

\subsection{A change-of-measure formula}\label{s32}

We begin with the following proposition, which is an extension of the
change-of-measure formula in Proposition~\ref{bqjac} so that it holds
also over $\Z_p$, with $4/9$ replaced by $|\J|$ for some rational
constant $\J$.
\begin{proposition}\label{propjacone}
  Let $K$ be $\R$ or $\Z_p$ for some prime $p$, let $|\cdot|$ denote the
  usual absolute value on $K$, and let $s:K^2\to V_K$ be a continuous
  section. Then there exists a rational nonzero constant $\J$,
  independent of $K$ and $s$, such that for any measurable function
  $\phi$ on $V_K$, we have
\begin{equation}
  \begin{array}{rcl}
    \displaystyle\int_{\PGL_3(K)\cdot
      s(K^2)}\!\!\!\phi(f)df&=&  |\J|\displaystyle\int_{(I,J)\in K^2}\displaystyle\int_{g\in \PGL_3(K)}
    \phi(g\cdot s(I,J))\omega(g) dIdJ;\\[0.3in]
    \displaystyle\int_{V_K}\phi(f)df&=&|\J|\displaystyle\int_{\substack{(I,J)\in K^2\\ \Delta(I,J)\neq 0\}}}
\Bigl(\displaystyle\sum_{f\in\textstyle{\frac{V_K(I,J)}{\PGL_3(K)}}}\frac{1}{\#\Stab(f)}\int_{g\in \PGL_3(K)}\phi(g\cdot f)\omega(g)\Bigr)dIdJ,
  \end{array}
\end{equation}
where $\Stab(f)$ denotes the stabilizer of $f$ in $\PGL_3(K)$ and
$\frac{V_K(I,J)}{\PGL_3(K)}$ denotes a set of representatives for the
action of $\PGL_3(K)$ on the set of elements in $V_K$ having
invariants $I$ and $J$.
\end{proposition}
Proposition~\ref{propjacone}
follows immediately from the proofs of \cite[Propositions~3.11 and 3.12]{BS} 
(see \cite[Remark 3.14]{BS}).

In the rest of the section, we compute the value of $\J$. To do so, we
use the following proposition whose proof is identical to that 
of \cite[Proposition 3.13]{BS}:
\begin{proposition}\label{propjacvol1}
  Let $p$ be a fixed prime number. Let $S\subset
  V_{\Z_p}$ be a set defined by congruence conditions modulo $p$, and let $\bar{
S}\subset
  V_{\F_p}$ denote the reduction of $S$ modulo $p$. Assume that $S=\pi^{-1}(\pi(
S))$, where $\pi$ is given by taking invariants. Then
\begin{equation}\label{eqcompjfirst}
|\J|_p=
\frac{\#\PGL_3(\F_p)\cdot\Bigl(\displaystyle\sum_{f\in\PGL_3(\F_p)\backslash\bar{S}}\displaystyle\frac{1}{\#\Aut_{\F_p}(f)}\Bigr)}
{p^{\dim V}\cdot\Vol(\PGL_3(\Z_p))\cdot\Bigl(\displaystyle\int_{(I,J)\in\pi(S)}
\displaystyle\sum_{f\in\textstyle{\frac{V_{\Z_p}(I,J)}{\PGL_3(\Z_p)}}}\frac{1}{\#
\Aut_{\Z_p}(f)}dIdJ\Bigr)}.
\end{equation}
\end{proposition}
Note that the numerator in the right hand side of \eqref{eqcompjfirst}
is equal to $\#\bar{S}$ by the orbit-stabilizer formula.  

The next two
lemmas allow us to evaluate the numerator and the denomenator of
\eqref{eqcompjfirst} for the set $S\subset V_{\Z_p}$ consisting of
elements whose discriminants are prime to $p$.

\begin{lemma}\label{lemmaprop1}
  Let $p$ be a fixed prime. Then the number of elements in $V_{\F_p}$
  that have nonzero discriminant is equal to $(p^2-p)\cdot\#\PGL_3(\F_p)$.
\end{lemma}
\begin{proof}
  Ternary cubic forms over $\F_p$ having nonzero discriminant
  correspond to isomorphism classes of 
triples $(C,L,B)$, where $C$ is a genus $1$ curve over
  $\F_p$, $L$ is a degree $3$ line bundle on $C$, and $B$ is a basis
  for the space of sections of $L$; here, two such pairs $(C,L,B)$ and
  $(C',L',B')$ are called isomorphic if there exists an isomorphism
  $\phi:C\to C'$ such that $L=\phi^\ast(L')$ and $B=\phi^\ast(B')$. 
The number of such isomorphism classes of pairs $(C,L)$ over
  $\F_p$ is exactly $p$, since there is exactly one pair for each
  $j$-invariant. Once the pair $(C,L)$ is fixed, there are
  $\#\GL_3(\F_p)$ different possible bases for the space of
  sections. Since $\#\GL_3(\F_p)=(p-1)\#\PGL_3(\F_p)$, we obtain the
  lemma.
\end{proof}

\begin{lemma}\label{lemmaprop}
  Let $p$ be a fixed prime and let $(I,J)\in \frac1{16}\Z_p\times\frac1{32}\Z_p$ be an element in
  the image of $\pi$ such that $p^2\nmid\Delta(I,J)$. Then
$$
\displaystyle\sum_{f\in\textstyle\frac{V_{\Z_p}(I,J)}{\PGL_3(\Z_p)}}\frac1{\#\Aut_{\Z_p}(f)}=\left\{\begin{array}{ll}1 \mbox{ for $p\neq 3$;}\\[0.03in]3 \mbox{ for $p=3$.}\end{array}\right.
$$
\end{lemma}
\begin{proof}
Since $p^2\nmid \Delta(I,J)$, we have
$\Aut_{\Z_p}(f)=\Aut_{\Q_p}(f)=E^{I,J}[3](\Q_p)$ for $f\in
V_{\Z_p}(I,J)$. Furthermore, we have
$$
\#\frac{V_{\Z_p}(I,J)}{\PGL_3(\Z_p)}=\#(E(\Q_p)/3E(\Q_p)).
$$
The lemma therefore follows from Lemma~\ref{lembk} in \S\ref{s34}.
\end{proof}

We may now compute the value of $|\J|_p$ using Proposition
\ref{propjacvol1}. For each prime $p$, we pick $S$ to be the set of
elements $f\in V_{\Z_p}$ such that $p\nmid \Delta(f)$. Since
$\Vol(\PGL_3(\Z_p))=\#\PGL_3(\F_p)/p^8$, Equation \eqref{eqcompjfirst}
in conjunction with Lemmas \ref{lemmaprop1} and \ref{lemmaprop} implies that
$$
|\J|_p=|3|_p\frac{p-1}{p\Vol(\pi(S))}.
$$
We may now use Theorem \ref{eligible} to compute the volume of
$\pi(S)$, yielding
$$
\Vol(\pi(S))=\left\{\begin{array}{cl} \frac{p-1}{p} &\mbox{ if $p\geq 5$;}\\[0.08in]\frac{2}{81} &\mbox{ if $p=3$;}\\[0.08in]2 &\mbox{ if $p=2$.}\end{array}\right.
$$
We conclude that $\J=4/9$, as desired.

\subsection{Computations of $p$-adic densities in terms of local masses}
Proposition \ref{propselparz} asserts that the nonidentity elements in
the $3$-Selmer group of $E^{I,J}$ are in bijection with
$\PGL_3(\Q)$-orbits on the set of strongly irreducible locally soluble
integral ternary cubic forms having invariants $I$ and $J$.  In
Section 2, we computed the asymptotic number of
$\SL_3(\Z)$-equivalence classes of strongly irreducible integral
ternary cubic forms having bounded height.  In order to use this to
compute the number of $\PGL_3(\Q)$-equivalence classes of strongly
irreducible locally soluble integral ternary cubic forms having
bounded height, we must count each locally soluble orbit
$\PGL_3(\Z)\cdot f$ weighted by $1/n(f)$, where $n(f)$ is the number
of $\PGL_3(\Z)$-orbits in the $\PGL_3(\Q)$-equivalence class of $f$ in
$V_\Z$. Since we have shown that all but a negligible number of
integral ternary cubic forms have trivial stabilizer in $\PGL_3(\Q)$,
we may instead count each locally soluble orbit $\PGL_3(\Z)\cdot f$
weighted by $1/m(f)$, where
$$m(f):=\sum_{f'\in B(f)}\frac{\#\Aut_\Q(f')}{\#\Aut_\Z(f')}=\sum_{f'\in B(f)}\frac{\#\Aut_\Q(f)}{\#\Aut_\Z(f')};$$
here $B(f)$ is a set of representatives for the action of $\PGL_3(\Z)$
on the $\PGL_3(\Q)$-equivalence class of $f$ in $V_\Z$, while
$\Aut_\Q(f)$ and $\Aut_\Z(f)$ are the stabilizers of $f$ in
$\PGL_3(\Q)$ and $\PGL_3(\Z)$, respectively.

For a prime $p$ and an ternary cubic form $f\in V_{\Z_p}$, define $m_p(f)$ by 
$$m_p(f):=\sum_{f'\in B_p(f)}\frac{\#\Aut_{\Q_p}(f')}{\#\Aut_{\Z_p}(f')}=\sum_{f'\in B(f)}\frac{\#\Aut_{\Q_p}(f)}{\#\Aut_{\Z_p}(f')},$$
where $B_p(f)$ denotes a set of representatives for the action of
$\PGL_3(\Z_p)$ on the $\PGL_3(\Q_p)$-equivalence class of $f$ in
$V_{\Z_p}$, while $\Aut_{\Q_p}(f)$ (resp.\ $\Aut_{\Z_p}(f)$) denotes
the stabilizers of $f$ in $\PGL_3(\Q_p)$ (resp.\ $\PGL_3(\Z_p)$).
Then we have the following proposition which explains the advantage of
using the weights $m(f)$ rather than~$n(f)$, and whose proof is
identical to that of \cite[Proposition~3.6]{BS}.

\begin{proposition}\label{locglob}
Suppose $f\in V_\Z$ has nonzero discriminant. Then $m(f)=\prod_pm_p(f)$.
\end{proposition}

Suppose now that $F$ is a large family of elliptic
curves. Recall that we denoted the set $\{(I(E),J(E):E\in F\}$ by
$\Inv(F)$ and the $p$-adic closure of $\Inv(F)$ in $\Z_p^2$ by
$\Inv_p(F)$.
Let $S(F)$ denote the set of all locally soluble integral ternary cubic forms having invariants $I$ and $J$ such that
$(I,J)\in \Inv(F)$, and let $S_p(F)$ denote the
$p$-adic closure of $S(F)$ in $V_{\Z_p}$.  We now determine the
$p$-adic density of $S_p(F)$, where each element $f\in
S_p(F)$ is weighted by $1/m_p(f)$,
in terms of a {\it local $(p$-adic$)$ mass} $M_p(V,F)$ involving all
isomorphism classes of soluble $3$-coverings of elliptic curves over
$\Q_p$:

\begin{proposition}\label{denel}
We have
$$\int_{S_p(F)}\frac{1}{m_p(f)}df=|4/9|_p\Vol(\PGL_3(\Z_p))M_p(V,F),$$
where
\begin{equation}\label{mpvfdef}
M_p(V,F)=\displaystyle{\int_{(I,J)\in \Inv_p(F)}\frac{\#(E^{I,J}(\Q_p)/3E^{I,J}(\Q_p))}{\#E^{I,J}(\Q_p)[3]}}dIdJ.
  \end{equation}
\end{proposition}
The proof of Proposition~\ref{denel} is identical to that of
\cite[Proposition~3.9]{BS}.

\subsection{The average size of the $3$-Selmer groups of elliptic curves in a large family (Proof of Theorem \ref{ellipgen})}\label{s34}

In analogy with $M_p(V,F)$, we define the local mass $M_p(F)$ by
\begin{equation}\label{mpufdef}
M_p(F)=\int_{(I,J)\in \Inv_p(F)}dIdJ.
\end{equation}
We also define the following analogues at infinity of $M_p(F)$ and
$M_p(V,F)$, respectively.
\begin{equation}\label{eqminf}
  \begin{array}{rcl}
    M_\infty(F;X)&:=&\displaystyle\int_{\substack{(I,J)\in\Inv_\infty(F)\\H(I,J)
<X}}dIdJ,\\[.3in]
    M_\infty(V,F;X)&:=&\displaystyle\int_{\substack{(I,J)\in\Inv_\infty(F)\\H(I,
J)<X}}\displaystyle\frac{\#(E^{I,J}(\R)/3E^{I,J}(\R))}{\# E^{I,J}(\R)[3]}dIdJ.
  \end{array}
\end{equation}
We then have the following result counting the number of elliptic
curves in a large family, which is \cite[Theorem~3.17]{BS}:
\begin{theorem}\label{thnumelip}
Let $F$ be a large family of elliptic curves and let $N(F;X)$
denote the number of elliptic curves in $F$ that have
height bounded by $X$. Then
\begin{equation}\label{eqnumelip}
N(F;X)=M_\infty(F;X)\prod_pM_p(F)+o(X^{5/6}).
\end{equation}
\end{theorem}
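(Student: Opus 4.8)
\textbf{Proof plan for Theorem~\ref{thnumelip}.}
The plan is to count pairs $(I,J)\in\Z\times\Z$ lying in $F^\inv$ with $H(I,J)<X$, separately according to the sign of $\Delta(I,J)=4I^3-J^2$, and then match the main term against $(\Vol(R^+)+\Vol(R^-))X^{5/6}\prod_pM_p(U_1,F)$. For the unconstrained count, an application of Proposition~\ref{lemdav} to the semi-algebraic regions $R^+(X)$ and $R^-(X)$ already gives that the number of lattice points $(I,J)$ with $H(I,J)<X$ and $4I^3-J^2>0$ (resp.\ $<0$) equals $\Vol(R^+(X))+O(X^{1/2})$ (resp.\ $\Vol(R^-(X))+O(X^{1/2})$), and by the scaling $\Vol(R^\pm(X))=\Vol(R^\pm)X^{5/6}$ this is $\Vol(R^\pm)X^{5/6}+O(X^{1/2})$. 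The task is then to intersect with $F^\inv$, which is cut out by (possibly infinitely many) congruence conditions, one at each prime, with local densities packaged by $M_p(U_1,F)$ as in (\ref{mpufdef}).

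Next I would establish the upper bound. Since $F^\inv=\bigcap_p(F_p^\inv\cap\Z\times\Z)$, truncating the intersection to primes $p\le Y$ only enlarges the set; applying Proposition~\ref{lemdav} to the union of the finitely many lattice-translates defining $\bigcap_{p\le Y}(F_p^\inv\cap\Z\times\Z)$ (exactly as in the derivation of (\ref{oo}), but now in the two-dimensional $(I,J)$-space so that each $d$-dimensional volume is rescaled by $1/m^d$) gives
\begin{equation*}
\limsup_{X\to\infty}\frac{N(F^{\inv,\pm};X)}{X^{5/6}}\le \Vol(R^\pm)\prod_{p\le Y}M_p(U_1,F),
\end{equation*}
and letting $Y\to\infty$ yields the stated upper bound with the full Euler product. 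For the matching lower bound, the point is that discarding the conditions at primes $p>Y$ can only add in pairs $(I,J)$ for which some $p>Y$ with $p^2\mid\Delta(I,J)$ — more precisely, using acceptability of $F$ we have $\bigcap_{p\le Y}(F_p^\inv\cap\Z\times\Z)\subset F^\inv\cup\bigcup_{p>Y}\mathcal A_p^\inv$, where $\mathcal A_p$ is the set of elliptic curves with $p^2\mid\Disc$. Proposition~\ref{propunif} bounds the number of such curves of height $<X$ by $O(X^{5/6}/p^{4/3})$ with an absolute implied constant, so $\sum_{p>Y}O(X^{5/6}/p^{4/3})=o(X^{5/6})$ uniformly once $Y$ is large, giving
\begin{equation*}
\liminf_{X\to\infty}\frac{N(F^{\inv,\pm};X)}{X^{5/6}}\ge \Vol(R^\pm)\prod_{p}M_p(U_1,F).
\end{equation*}
Combining the two bounds, summing the $\pm$ contributions, and observing that $N(F;X)=N(F^{\inv,+};X)+N(F^{\inv,-};X)$ since an elliptic curve is determined by $(I,J)=(I(E),J(E))$, gives (\ref{eqnumelip}).

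The main obstacle is the lower bound: one cannot directly sieve through infinitely many congruence conditions, so everything hinges on the uniformity estimate of Proposition~\ref{propunif} — that the number of curves of height $<X$ with $p^2\mid\Disc$ is $O(X^{5/6}/p^{4/3})$ with $p$-independent constant — which in turn rests on the lemma counting pairs $(I,J)\neq(0,0)$ with $p\mid I$, $p\mid J$ (dominant for additive reduction at $p>3$) together with \cite[Proposition~3.17]{BS} handling the multiplicative-type contribution; the exponent $4/3>1$ is exactly what makes the tail sum over $p>Y$ converge to something $o_Y(1)\cdot X^{5/6}$, closing the gap between $\limsup$ and $\liminf$. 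The remaining steps — the two-dimensional Davenport count and the verification that the product of local densities $\prod_p M_p(U_1,F)$ converges (which follows since $M_p(U_1,F)=1+O(1/p^{4/3})$ for all large $p$ by the same uniformity input) — are routine.
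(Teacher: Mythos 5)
Your proposal is correct and follows essentially the same route as the paper: a two--dimensional Davenport count for the unconstrained lattice points in $R^\pm(X)$, a truncation at primes $p\le Y$ for the upper bound, and the inclusion $\bigcap_{p\le Y}(F_p^\inv\cap\Z\times\Z)\subset F^\inv\cup\bigcup_{p>Y}\mathcal A_p^\inv$ from acceptability combined with the uniformity estimate of Proposition~\ref{propunif} for the matching lower bound. The only additions beyond the paper's argument are your explicit remarks on the convergence of $\prod_pM_p(U_1,F)$ and the scaling $\Vol(R^\pm(X))=\Vol(R^\pm)X^{5/6}$, both of which are consistent with the paper.
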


We say that an element $f\in V_\Z$ is {\it bad at $p$} if either $f$
is not $\Q_p$-soluble or $m_p(f)\neq 1$. To deduce Theorem~\ref{ellipgen} using Theorem~\ref{thsqfreetc}, we need the following result.

\begin{proposition}\label{badatp}
Let $f$ be an integral ternary cubic form such that either
$f$ is insoluble at $p$ or $m_p(f)\neq 1$. Then $p^2\mid \Delta(f)$.
\end{proposition}
\begin{proof}
  Suppose that $f$ is an integral ternary cubic form such that the
  curve defined by $f(x,y,z)=0$ has no $\Q_p$-points. We claim that
  $f$ is geometrically reducible over $\F_p$ (i.e., $f\pmod{p}$
  factors into a product of lower degree forms defined over
  $\overline{\F}_p$). This is because if $f$ were geometrically
  irreducible over $\F_p$, then the Lang-Weil estimates \cite{LW}
  would imply
  that the curve $f(x,y,z)=0$ has a smooth point in $\P^2(\F_p)$. By
  Hensel's lemma, this smooth point lifts to a point in
  $\P^2(\Q_p)$. Therefore, $f$ is geometrically reducible over $\F_p$,
  implying that $p^2\mid \Delta(f)$.

  If $f\in V_\Z$ satisfies $m_p(f)\neq 1$, then there exists an element
  $\gamma\in\PGL_3(\Q_p)$ such that $\gamma\cdot f\in V_{\Z_p}$.  By
  an appropriate change-of-basis in $\PGL_3(\Z_p)$, we may assume that
  $\gamma$ is of the form $\gamma=\left(
    \begin{smallmatrix}
      p^{-a}&&\\&\!\!\!1&\\&&\,p^{b}
    \end{smallmatrix}\right)$, 
  where $a$ and $b$ are nonnegative and at least one of $a$ and $b$ is
  nonzero.  If $b>a$, then clearly $z$ is a factor of the reduction
  of $f$ modulo~$p$.  Now assume that $a\geq b$, so that $a>0$. In
  this case, consider the form $f_1=\left(
    \begin{smallmatrix}
      p^{-1}&&\\&\!\!1&\\&&\,1
    \end{smallmatrix}\right)\cdot pf$
  which is an element of $V_{\Z_p}$ having the same invariants as
  $f$. The reduction of $f_1$ modulo~$p$, being a multiple of $x$, is
  a reducible ternary cubic form. We conclude that $p^2\mid
  \Delta(f_1)=\Delta(f)$.
\end{proof}

Theorem \ref{ellipgen} will be deduced from the following result:
\begin{theorem}\label{th38}
Let $F$ be a large family of elliptic curves. Then
\begin{equation}\label{eqthlast}
\displaystyle{\lim_{X\to\infty}\displaystyle\frac{\displaystyle
    \sum_{\substack{E\in
        F\\H'(E)<X}}(\#S_3(E)-1)}{\displaystyle\sum_{\substack{E\in
        F\\H'(E)<X}}1}}=\Vol(\PGL_3(\Z)\backslash\PGL_3(\R))\frac{M_\infty(V,F;X)}{M_\infty(F;X)}\prod_p\Bigl[\Vol(\PGL_3(\Z_p))\frac{M_p(V,F)}{M_p(U_1,F)}\Bigr].
\end{equation}
\end{theorem}
\begin{proof}
  The numerator of the right hand side of \eqref{eqthlast} is equal to
  the number of $\PGL_3(\Z)$-orbits on $S(F)$ having height bounded by
  $X$, where each orbit $\PGL_3(\Z)\cdot f$ is counted with weight
  $1/m(f)$.
  Therefore, by Theorem~\ref{thsqfreetc}, Proposition \ref{locglob},
  and Proposition \ref{badatp}, we obtain
\begin{equation}\label{eqn3s}
\begin{array}{rcl}
  \displaystyle
 \sum_{\substack{E\in
        F\\H'(E)<X}}(\#S_3(E)-1) &\!\!\!=\!\!\!&N(V_\Z\cap S_\infty(F);X)\displaystyle\prod_p\int_{S_p(F)}\frac1{m_p(f)}df+o(X^{5/6})\\
  &\!\!\!=\!\!\!&\displaystyle\frac49\Vol(\PGL_3(\Z)\backslash\PGL_3(\R))M_\infty(V,F;X)\displaystyle\prod_p\bigl[\Bigl|\frac49\Bigr|_p\Vol(\PGL_3(\Z_p)) M_p(V,F)\bigr]+o(X^{5/6})\\[.2in]&\!\!\!=\!\!\!&
\Vol(\PGL_3(\Z)\backslash\PGL_3(\R))M_\infty(V,F;X)\displaystyle\prod_p\bigl[\Vol(\PGL_3(\Z_p)) M_p(V,F)\bigr]+o(X^{5/6}),
\end{array}
\end{equation}
where the second equality follows from Proposition \ref{denel}.
Taking the ratio of (\ref{eqn3s}) and (\ref{eqnumelip}), we obtain the
theorem.
\end{proof}

To evaluate the right hand side of (\ref{eqthlast}) we need the
following fact whose proof is identical to that of \cite[Lemma 3.1]{BK}:
\begin{lemma}\label{lembk}
Let $E$ be an elliptic curve over $\Q_p$. We have
$$\#(E(\Q_p)/3E(\Q_p))=
\left\{\begin{array}{rcl}
\#E[3](\Q_p) & {\mbox{\em if }} p\neq 3;\\[.1in]
3\cdot\#E[3](\Q_p)& {\mbox{\em if }} p=3.
\end{array}\right.$$
\end{lemma}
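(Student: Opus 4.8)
Let $E$ be an elliptic curve over $\Q_p$. We have
$$\#(E(\Q_p)/3E(\Q_p))=
\left\{\begin{array}{rcl}
\#E[3](\Q_p) & {\mbox{\em if }} p\neq 3;\\[.1in]
3\cdot\#E[3](\Q_p)& {\mbox{\em if }} p=3.
\end{array}\right.$$

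\medskip

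The plan is to compare the finite group $E(\Q_p)/3E(\Q_p)$ with the finite group $E(\Q_p)[3]$ by exhibiting a short exact sequence relating them and then analyzing the orders of the intervening groups. First I would recall that multiplication by $3$ gives an exact sequence of abelian groups
$$0\longrightarrow E(\Q_p)[3]\longrightarrow E(\Q_p)\stackrel{\cdot 3}{\longrightarrow} E(\Q_p)\longrightarrow E(\Q_p)/3E(\Q_p)\longrightarrow 0,$$
so that if $E(\Q_p)$ were finite we would immediately get $\#E(\Q_p)[3]=\#(E(\Q_p)/3E(\Q_p))$ by counting. Of course $E(\Q_p)$ is not finite, but it carries a natural topology making it a compact $p$-adic analytic group, and it is a standard fact (see, e.g., Silverman) that $E(\Q_p)$ contains a finite-index subgroup isomorphic as a topological group to $\Z_p$; concretely, the formal group gives a filtration $E(\Q_p)\supset E_1(\Q_p)\supset E_2(\Q_p)\supset\cdots$ with $E(\Q_p)/E_1(\Q_p)$ finite, $E_1(\Q_p)/E_2(\Q_p)$ finite, and $E_n(\Q_p)\cong\Z_p$ (as topological groups) for $n$ sufficiently large.

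The key computation is then the index $[E(\Q_p):3E(\Q_p)]$ via the snake lemma applied to multiplication by $3$ on the short exact sequence $0\to E_n(\Q_p)\to E(\Q_p)\to E(\Q_p)/E_n(\Q_p)\to 0$ for $n$ large. Since $E(\Q_p)/E_n(\Q_p)$ is finite, multiplication by $3$ on it has equal kernel and cokernel sizes, so those contributions cancel in the alternating product of orders. Thus the quantity
$$\frac{\#(E(\Q_p)/3E(\Q_p))}{\#E(\Q_p)[3]}=\frac{\#\mathrm{coker}(\cdot 3\mid E_n(\Q_p))}{\#\ker(\cdot 3\mid E_n(\Q_p))}=\frac{\#(\Z_p/3\Z_p)}{\#\Z_p[3]}=\#(\Z_p/3\Z_p),$$
because $\Z_p$ is torsion-free. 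Now $\#(\Z_p/3\Z_p)=1$ if $p\neq 3$ (since $3$ is a unit in $\Z_p$) and $\#(\Z_p/3\Z_p)=3$ if $p=3$. This yields exactly the two cases in the statement. The one point requiring a little care is that $E(\Q_p)[3]$ is genuinely finite (so the ratio makes sense): this follows since $E_1(\Q_p)$, being a pro-$p$ group with no $3$-torsion when $p\neq 3$ — and with $3$-torsion at most that of $\Z_3\cong E_n(\Q_3)$ modulo the finite piece when $p=3$ — contributes only finitely, while $E(\Q_p)/E_1(\Q_p)$ is finite; alternatively, $E[3](\overline{\Q_p})\cong(\Z/3\Z)^2$ is already finite so its $\Q_p$-points are too.

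The main obstacle, such as it is, is purely bookkeeping: one must be careful that the "finite index $\Z_p$" statement is applied correctly and that the snake-lemma order count is valid, which requires all the groups in sight ($E(\Q_p)[3]$, $E(\Q_p)/3E(\Q_p)$, and the finite quotients $E(\Q_p)/E_n(\Q_p)$) to be finite — this is where one invokes the structure theory of $E(\Q_p)$ as a compact analytic group. Since the excerpt explicitly says the proof is identical to that of \cite[Lemma 3.1]{BK}, I would simply cite that reference for the detailed verification and present the argument above as a sketch, emphasizing that the ratio $\#(E(\Q_p)/3E(\Q_p))/\#E(\Q_p)[3]$ equals the local factor $|1/3|_p=\#(\Z_p/3\Z_p)$, which is the content of the lemma.
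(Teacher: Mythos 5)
Your proof is correct and follows essentially the same route as the paper: both exhibit a finite-index subgroup of $E(\Q_p)$ isomorphic to $\Z_p$ (the paper cites Lutz, you the formal-group filtration) and apply the snake lemma for multiplication by $3$ to the resulting short exact sequence, with the finite quotient contributing equal kernel and cokernel orders. No gaps.
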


\begin{proof}
  A well-known result of Lutz (see, e.g., \cite[Chapter~7,
  Proposition~6.3]{Sil} for a proof) asserts that there exists a subgroup
  $M\subset E(\Q_p)$ of finite index that is isomorphic to $\Z_p$. Let
  $G$ denote the finite group $E(\Q_p)/M$.
 Then by applying the snake lemma to the following diagram
$$\xymatrix{
0\ar[d]\ar[r]&M\ar[d]^{[3]}\ar[r]&E(\Q_p)\ar[d]^{[3]}\ar[r]&G\ar[d]^{[3]}\ar[r]&0\ar[d]\\
0\ar[r]&M\ar[r]&E(\Q_p)\ar[r]&G\ar[r]&0}$$
we obtain the exact sequence
$$0\to M[3]\to E(\Q_p)[3]\to G[3]\to M/3M\to E(\Q_p)/3E(\Q_p)\to G/3G\to 0.$$
Since $G$ is a finite group and $M$ is isomorphic to $\Z_p$, Lemma \ref{lembk} follows.
\end{proof}

By Lemma \ref{lembk} and the definitions of $M_p(V,F)$ and $M_p(U_1,F)$, we have
\begin{equation}
\frac{M_p(V,F)}{M_p(F)}\;=\;\displaystyle\frac{\displaystyle\displaystyle\int_{(I,J)\in \Inv_p(F)}\frac{\#(E^{I,J}(\Q_p)/3E^{I,J}(\Q_p))}{\#E^{I,J}(\Q_p)[3]}dIdJ}
{\displaystyle\displaystyle\int_{(I,J)\in \Inv_p(F)}dIdJ}\;=\;\left\{\begin{array}{ll}
1
&\quad\mbox{if $p\neq 3$;}\\[.1in]
3
&\quad\mbox{if $p=3$.}
\end{array}\right.
\end{equation}
Furthermore, we know that $M_\infty(V,F;X)/M_\infty(F;X)=1/3$. Therefore, Theorem \ref{th38} yields
\begin{eqnarray*}
\displaystyle{\lim_{X\to\infty}\displaystyle\frac{\displaystyle
    \sum_{\substack{E\in
        F\\H'(E)<X}}(\#S_3(E)-1)}{\displaystyle\sum_{\substack{E\in
        F\\H'(E)<X}}1}}=\Vol(\PGL_3(\Z)\backslash\PGL_3(\R))\prod_p\Vol(\PGL_3(\Z_p))
\end{eqnarray*}
which is then equal to $3\zeta(2)\zeta(3)\prod_p\bigl((1-p^{-2})(1-p^{-3})\bigr)=3$, the Tamagawa number of $\PGL_3(\Q)$.
We have proven Theorem \ref{ellipgen}.

\section{A positive proportion of elliptic curves have rank $0$}

We have shown in the previous section that the average rank of all
elliptic curves, when ordered by height, is less than $
1\frac16
$.  This immediately implies that a large proportion (indeed, at
least $62.5\%$) of all elliptic curves must have rank 0 {\it or} 1.

In order to deduce analogous positive proportion statements for the
individual ranks~0 and~1, we may attempt to make use of information
regarding the distribution of the {\it parity} of the ranks---or of
the $3$-Selmer ranks---of these curves.  Indeed, if we knew that even
and odd 3-Selmer ranks occur equally often in a large family of
elliptic curves, then this would imply by Theorem~\ref{ellipgen} that
a positive proportion of curves in that family have rank~0, and
(assuming finiteness of the Tate--Shafarevich group) a positive
proportion have rank~1.

In Section 4.1, we use a recent result of Dokchitser--Dokchitser~\cite{DD} 
(see also Nekov\'a\v{r}~\cite{Nekovar}) 
to construct a large,
positive proportion family $F$ of elliptic curves in which the
parities of the 3-Selmer ranks of the curves in $F$ are equally
distributed between even and odd, thus unconditionally yielding a
positive proportion of elliptic curves having rank~0.

We may also combine our counting techniques with the recent work of
Skinner--Urban~\cite{SU}, in order to deduce that a positive
proportion of all elliptic curves, when ordered by height, have {\it
  analytic rank} \,0; i.e., a positive proportion of all elliptic
curves have nonvanishing $L$-function $L(E,s)$ at $s=1$. Since these
analytic rank 0 curves form a subset of the rank 0 curves of
Section 4.1, it follows that a positive proportion of all elliptic
curves satisfy the Birch and Swinnerton-Dyer conjecture.  This is
discussed in Section 4.2.

\subsection{Elliptic curves having algebraic rank 0}

Recall that the conjecture of Birch and Swinnerton-Dyer implies, in
particular, that the evenness or oddness of the rank of an elliptic
curve $E$ is determined by whether its {\it root number}---that is,
the sign of the functional equation of the $L$-function $L(E,s)$ of
$E$---is $+1$ or $-1$, respectively.  It is widely believed that the
root numbers $+1$ and $-1$ occur equally often among all elliptic
curves when ordered by height.  Indeed, we expect the same to be true
in any large family as well.

In this subsection, we prove:
\begin{theorem}\label{pralrank}
  Suppose $F$ is a large family of elliptic curves such that
  exactly $50\%$ of the curves in $F$, when ordered by height, have
  root number $+1$.  Then at least $25\%$ of the curves in $F$, when
  ordered by height, have rank $0$.  Furthermore, if we assume that
  all the elliptic curves in $F$ have finite Tate-Shafarevich groups,
  then at least $5/12> 41.6\%$ of the curves in $F$ have rank $1$.
\end{theorem}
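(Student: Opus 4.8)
The plan is to combine the bound on the average size of $S_3$ in Theorem~\ref{ellipgen} with the $3$-parity theorem of Dokchitser--Dokchitser~\cite{DD}, and then to extract the two proportions by a short linear-programming argument over the possible distributions of $3$-Selmer ranks in~$F$.

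The first step is to convert the hypothesis on root numbers into one on the parity of $\dim_{\F_3}S_3(E)$. For all but a density-zero subfamily of $E\in F$ one has $E(\Q)[3]=0$, and for such $E$ one checks, by comparing $S_3(E)$ with the $3^\infty$-Selmer group $S_{3^\infty}(E)$, that $\dim_{\F_3}S_3(E)$ and the $\Z_3$-corank $\mathrm{rk}_3(E)$ of $S_{3^\infty}(E)$ differ by the $\F_3$-dimension of the $3$-torsion in the maximal finite subgroup of $\SH(E)[3^\infty]$; since that subgroup carries a nondegenerate alternating pairing (the Cassels--Tate pairing), this dimension is even, so $\dim_{\F_3}S_3(E)\equiv\mathrm{rk}_3(E)\pmod 2$. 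By \cite{DD}, $(-1)^{\mathrm{rk}_3(E)}$ equals the global root number $w(E)$. Hence the hypothesis that exactly $50\%$ of the curves in $F$, ordered by height, have $w(E)=+1$ becomes, up to a density-zero error, the statement that $50\%$ have $\dim_{\F_3}S_3(E)$ even and $50\%$ have it odd. This translation is the only genuinely nonelementary input, and I expect its one delicate point to be the disposal of the curves possessing a rational $3$-torsion point; these number $o(X^{5/6})$, so they are harmless.

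Now for the rank~$0$ assertion. Let $p_i$ be the density in $F$ (when ordered by height) of curves with $\dim_{\F_3}S_3(E)=i$. Writing $\#S_3(E)=3^{\dim_{\F_3}S_3(E)}$ and using $3^0-1=0$, $3^1-1=2$, and $3^i-1\ge 8$ for $i\ge 2$, Theorem~\ref{ellipgen} yields
\[
2p_1+8\sum_{i\ge 2}p_i\;\le\;3 .
\]
By the parity equidistribution just established, $\sum_{i\text{ even}}p_i=\sum_{i\text{ odd}}p_i=\tfrac12$; setting $q=\tfrac12-p_0$ and $q'=\tfrac12-p_1$, the densities of curves with $\dim_{\F_3}S_3(E)\ge 2$ of even, respectively odd, parity, the displayed inequality rearranges to $4q+3q'\le 1$, whence $q\le\tfrac14$ and $p_0\ge\tfrac14$. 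Since $\mathrm{rank}(E)\le\dim_{\F_3}S_3(E)$ for every $E$, each of these curves has rank~$0$; thus at least $25\%$ of $F$ has rank~$0$.

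Finally, for the rank~$1$ assertion, assume in addition that $\SH(E)$ is finite for every $E\in F$. Then $\SH(E)[3^\infty]$ is finite, so $\mathrm{rk}_3(E)=\mathrm{rank}(E)$, and by the second paragraph $\dim_{\F_3}S_3(E)\equiv\mathrm{rank}(E)\pmod 2$ for $100\%$ of $F$; combined with $\mathrm{rank}(E)\le\dim_{\F_3}S_3(E)$, a curve with $\dim_{\F_3}S_3(E)=1$ has rank exactly~$1$. Every other curve of odd $3$-Selmer rank has $\dim_{\F_3}S_3(E)\ge 3$ and hence $\#S_3(E)-1\ge 26$; since the curves of odd $3$-Selmer rank have total density $\tfrac12$, the average of $\#S_3(E)-1$ over $F$ is at least $2p_1+26(\tfrac12-p_1)=13-24p_1$. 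Comparing this with the upper bound $3$ supplied by Theorem~\ref{ellipgen} forces $p_1\ge\tfrac{5}{12}$, and since all of these curves have rank~$1$, at least $41.6\%$ of $F$ has rank~$1$. The genuine obstacle is thus not the counting, both optimizations being elementary, but the algebraic bookkeeping in the second paragraph that turns \cite{DD} into the clean parity dichotomy for $\dim_{\F_3}S_3(E)$.
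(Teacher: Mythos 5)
Your proposal is correct and follows essentially the same route as the paper: combine the bound of $4$ on the average size of $S_3(E)$ from Theorem~\ref{ellipgen} with the Dokchitser--Dokchitser parity theorem, note that $\#S_3(E)$ lies in $\{1,9,\dots\}$ or $\{3,27,\dots\}$ according to parity, and optimize to get the densities $\tfrac14$ and $\tfrac{5}{12}$, using the squareness of \SH{} (equivalently, the alternating Cassels--Tate pairing) to pass from $\#S_3(E)=3$ to rank $1$. Your explicit linear-programming bookkeeping and your careful reduction of the root-number hypothesis to parity of $\dim_{\F_3}S_3(E)$ (handling the $3$-torsion and the even contribution of $\SH[3^\infty]$) are just a more detailed rendering of the paper's two-stage conditional-average argument.
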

We will construct an explicit positive proportion family $F$
satisfying the hypotheses of Theorem~\ref{pralrank}; this will then
imply Theorem~\ref{algrank0}.  (Of course, it is expected that the
family $F$ of all curves satisfies the root number hypothesis of the
theorem; however, this remains unproved.)

Our proof of Theorem~\ref{pralrank} is based on Theorem~\ref{ellipgen}
in conjunction with a recent remarkable result of Dokchitser and
Dokchitser~\cite{DD} which asserts (as predicted by the Birch and
Swinnerton-Dyer conjecture) that
the parity of the $p$-Selmer rank of an elliptic curve $E$ (for any prime $p$) is determined by the root number of $E$:

\begin{theorem}[Dokchitser--Dokchitser]\label{thDD}
Let $E$ be an elliptic curve over $\Q$ and let $p$ be any prime. 
Let $s_p(E)$ and $t_p(E)$ denote the rank of the $p$-Selmer group of $E$ and the rank of $E(\Q)[p]$, respectively.
Then the quantity $r_p(E):=s_p(E)-t_p(E)$ is even if and only if the root
number of $E$ is~$+1$.
\end{theorem}

We now prove Theorem~\ref{pralrank}.

\vspace{.15in}\noindent {\bf Proof of Theorem~\ref{pralrank}:} First
note that Lemma \ref{lemtemp2} implies that the number of elliptic
curves over $\Q$ that have a nontrivial rational $3$-torsion point is
negligible. Thus for a density of $100\%$ of elliptic curves $E$, we have
$r_p(E)=s_p(E)$.

Now, by Theorem~\ref{ellipgen}, the average size of the $3$-Selmer group of
curves in $F$ is at most $4$. On the other hand, by Theorem~\ref{thDD}
we know that that exactly $50\%$ of the curves in $F$ have odd
$3$-Selmer rank and thus have at least $3$ elements in the $3$-Selmer
group. Hence the average size of the $3$-Selmer groups among the $50\%$
of elliptic curves in $F$ having even $3$-Selmer rank is at most $5$.
Now if the 3-Selmer group of an elliptic curve has even rank, then it
must have size 1, 9, or more than 9.  For the average of such sizes to
be 5, at least half must be equal to 1.  Thus among these $50\%$ of
curves in $F$ having even 3-Selmer rank, at least half have trivial
$3$-Selmer group, and therefore have rank~$0$.

Next, suppose that every odd rank curve in $F$ has a finite
Tate-Shafarevich group. A well-known result of Cassels states that if
$E/\Q$ is an elliptic curve such that $\SH(E)$ is finite, then
$|\SH(E)|$ is a square.  Now if the 3-Selmer group of an elliptic curve
has odd rank, then it must have size 3, 27, or more than 27.  For the
average of such sizes to be at most $7$, at least 5/6 of them must equal 3.
Thus among these $50\%$ of curves in $F$ with odd 3-Selmer rank, at
least 5/6 of them have $3$-Selmer group of size 3.  Since $\SH$ is
always a square, we conclude that $\SH[3]$ for all these elliptic
curves is trivial and so they each have rank 1.
$\Box$\vspace{.15in}

We now construct an explicit positive proportion large family $F$
of elliptic curves for which exactly $50\%$ of the curves have root
number equal to $1$.  By Theorem~\ref{pralrank}, this will then imply
Theorems~\ref{algrank0} and \ref{algrank1}.

First, recall that the root number $\omega(E)$ of an elliptic curve
$E$ over $\Q$ may be expressed in terms of a product over all primes
of local root numbers $\omega_p(E)$ of $E$, namely,
$\omega(E)=-\prod_p\omega_p(E)$.  The local root number $\omega_p(E)$
is easy to compute when $E$ has good or multiplicative reduction at
$p$. In fact, it is known (see, e.g., \cite{Rh}) that $\omega_p(E)=1$
whenever $E$ has good or non-split multiplicative reduction at $p$,
and $\omega_p(E)=-1$ when $E$ has split multiplicative reduction at
$p$.

Suppose an elliptic curve $E/\Q$ has multiplicative reduction at a
prime $p\geq 3$. Then it is easily checked that $E$ has split
reduction precisely when $\bigl(\frac{-2J}{p}\bigr)=1$. It is also
clear that if $E_{-1}$ denotes the twist of $E$ over $\Q[i]$, then
$J(E_{-1})=-J(E)$. Hence, given an odd prime $p$ for which $E$ has
multiplicative reduction at $p$, we have
$\omega_p(E)=\omega_p(E_{-1})$ if and only if $p\equiv 1\pmod{4}$.

Let $F$ denote the set of all elliptic curves $E$ over $\Q$ satisfying
the following conditions:
\begin{itemize}
\item The curve $E$ and its twist $E_{-1}$ both have additive reduction
  at $2$, and furthermore the $j$-invariant of both curves $E$ and $E_{-1}$ are
  $2$-adic units.
\item $E$ has square-free discriminant away from $2$.
\item $\Delta'(E)\equiv 1\pmod4$, where
  $\Delta'(E):=|\Delta(E)/2^{v_2(\Delta(E))}|$ is the positive odd
  part of the discriminant of $E$.
\end{itemize}
The set $F$ is a large family. Moreover, if $E\in F$, then the
twist $E_{-1}$ of $E$ by $-1$ is also clearly in $F$, since the odd
part of the discriminant of an elliptic curve is preserved under such
a twist. Since $\Delta'(E)$ is squarefree, the third condition implies
that the number of distinct primes congruent to 3 (mod~4) that divide
the discriminant of $E$ is even. Now for a prime factor $p$ of the
discriminant, we have already observed that
$\omega_p(E)=-\omega_p(E_{-1})$ if and only if $p\equiv 3\pmod{4}$.
Furthermore, the first condition implies that
$\omega_2(E)=-\omega_2(E_{-1})$ (see \cite[Lemma 12]{SW}); therefore,
$\omega(E)=-\omega(E_{-1})$ for all $E\in F$.  Since the height of an
elliptic curve also remains the same under twisting by $-1$, it
follows that a density of exactly $50\%$ of elliptic curves in $F$,
when ordered by height, have root number~$+1$, as desired.

We have proven Theorems~\ref{algrank0} and \ref{algrank1}.  

\subsection{Elliptic curves having analytic rank  0}

We may similarly prove that a positive proportion of all elliptic
curves have analytic rank 0, by combining our counting arguments with
the recent beautiful work of Skinner--Urban~\cite{SU}.  Their work
implies, in particular,
that if $E/\Q$ is an elliptic curve satisfying certain mild conditions
and having trivial $3$-Selmer group (and therefore rank $0$), then the
$L$-function of $E$ does not vanish at the point $1$!  The following
theorem is a consequence of~\cite[Theorem~2]{SU}:
\begin{theorem}[Skinner--Urban]\label{thSU}

Let $E/\Q$ be an elliptic curve such that:
\begin{itemize}
\item[{\rm (a)}]The $3$-Selmer group of $E$ is trivial;
\item[{\rm (b)}]$E$ has good ordinary reduction at $3$;
\item[{\rm (c)}]The action of $G_\Q$ on $E[3]$ is irreducible.
\item[{\rm (d)}]There exists a prime $p\neq 3$ such that $p\|{\rm
    Cond}(E)$ and $\bar{\rho}(E,3)$ is ramified at $p$,
\end{itemize}
where $\bar{\rho}(E,3):{\rm Gal}(\bar{\Q}/\Q)\to\GL_2(\F_3)$ denotes
the usual Galois representation obtained from the action of ${\rm
  Gal}(\bar{\Q}/\Q)$ on the $3$-torsion points of $E$. Then
$L(E,1)\neq 0$.
\end{theorem}
We may use Theorem~\ref{pralrank} in conjunction with Skinner and
Urban's Theorem to prove:

\begin{theorem}\label{pranrank0}
  Suppose $F$ is a large family of elliptic curves having good
  ordinary reduction at $3$ such that $5\|\Disc(E)$ for every curve
  $E\in F$. Further assume that exactly $50\%$ of the curves in $F$,
  when ordered by height, have root number $+1$. Then at least $25\%$
  of elliptic curves in $F$ have analytic rank $0$.
\end{theorem}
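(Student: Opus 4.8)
The plan is to combine Theorem~\ref{pralrank} with the Skinner--Urban nonvanishing result (Theorem~\ref{thSU}), checking that the family $F$ in the hypothesis is precisely engineered so that its rank-$0$ curves meet all four conditions of Theorem~\ref{thSU}. First I would apply Theorem~\ref{pralrank} to $F$: since $F$ is acceptable and exactly $50\%$ of its curves have root number $+1$, at least $25\%$ of the curves in $F$, when ordered by height, have rank $0$. By Theorem~\ref{thDD} (Dokchitser--Dokchitser), a curve in $F$ has rank $0$ arising in this way exactly when its $3$-Selmer group is trivial; in fact the proof of Theorem~\ref{pralrank} shows that among the $50\%$ of curves in $F$ with even $3$-Selmer rank, at least half have trivial $3$-Selmer group, accounting for the $\geq 25\%$. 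So it suffices to argue that for all but a density-zero subset of these trivial-$3$-Selmer curves, hypotheses (b), (c), (d) of Theorem~\ref{thSU} also hold, so that $L(E,1)\neq 0$, i.e.\ $E$ has analytic rank $0$.

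Conditions (b) and (c) are built directly into the hypothesis of Theorem~\ref{pranrank0}: $F$ consists of semistable elliptic curves having good ordinary reduction at $3$, so every curve in $F$ satisfies (b) and (c). Condition (a) holds for exactly the curves we have already isolated (trivial $3$-Selmer group). The only remaining point is condition (d): the Galois action of $G_\Q$ on $E[3]$ is irreducible. Here I would argue that the curves in $F$ with \emph{reducible} mod-$3$ representation form a negligible subfamily: if $G_\Q$ acts reducibly on $E[3]$, then $E$ admits a rational $3$-isogeny, so $E$ or its $3$-isogenous curve has a rational $3$-torsion point; equivalently, the associated ternary cubic form fails to be totally irreducible (its Jacobian, which is $E$ itself up to isogeny, acquires a nontrivial rational $3$-torsion point). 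By Lemma~\ref{lemtemp2}, the number of $\SL_3(\Z)$-classes of strongly but not totally irreducible integral ternary cubic forms of height up to $X$ is $o(X^{5/6})$; since the number of curves in $F$ of height up to $X$ grows like a constant times $X^{5/6}$ by Theorem~\ref{thnumelip}, the curves in $F$ with reducible $E[3]$ have density zero in $F$. Alternatively one can cite the standard fact that curves with a rational $3$-isogeny (parametrized by the modular curve $X_0(3)$) are sparse when elliptic curves are ordered by height — this follows from a direct count of the relevant two-parameter subfamily — but routing it through Lemma~\ref{lemtemp2} keeps the argument internal to this paper.

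Assembling the pieces: at least $25\%$ of the curves in $F$ have rank $0$ and trivial $3$-Selmer group; of these, all but a density-zero set also satisfy (b), (c), (d) of Theorem~\ref{thSU}, hence have $L(E,1)\neq 0$, i.e.\ analytic rank $0$. Removing a density-zero set from a set of density $\geq 25\%$ still leaves a set of density $\geq 25\%$, so at least $25\%$ of the curves in $F$, ordered by height, have analytic rank $0$. The main obstacle in this argument is the verification that the reducible-$E[3]$ locus is negligible; everything else is a direct bookkeeping combination of Theorems~\ref{pralrank}, \ref{thDD}, and~\ref{thSU}. Once Theorem~\ref{pranrank0} is proved, Theorem~\ref{anrank0} follows by exhibiting a concrete positive-proportion acceptable family of semistable curves with good ordinary reduction at $3$ and exactly $50\%$ root number $+1$ — a mild refinement of the family $F$ constructed for Theorems~\ref{algrank0} and~\ref{algrank1}, now additionally imposing a suitable local condition at $3$ to force good ordinary reduction while preserving the twisting-by-$-1$ symmetry that equidistributes the root number — and Corollary~\ref{bsdcor} then follows since these analytic-rank-$0$ curves are a subset of the algebraic-rank-$0$ curves already constructed.
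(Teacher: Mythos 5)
Your overall architecture matches the paper's: conditions (b) and (c) of Theorem~\ref{thSU} are hypotheses on $F$, condition (a) holds for the $\geq 25\%$ of curves with trivial $3$-Selmer group produced by the proof of Theorem~\ref{pralrank}, and the whole argument reduces to showing that condition (d) fails only on a density-zero subset of $F$. But your primary mechanism for that last step has a genuine gap, in two places. First, reducibility of the $G_\Q$-action on $E[3]$ gives a rational $3$-isogeny, i.e.\ a Galois-stable subgroup $\{O,P,-P\}$; it does \emph{not} follow that $P$ is rational, nor that the $3$-isogenous curve has a rational $3$-torsion point. Galois acts on the kernel through a character $\chi:G_\Q\to\{\pm1\}$ and on the kernel of the dual isogeny through $\omega\chi^{-1}$ ($\omega$ the mod-$3$ cyclotomic character), and both can be nontrivial simultaneously; so the reducible-$E[3]$ locus is strictly larger than the locus you propose to bound. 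Second, even for curves that do have rational $3$-torsion, Lemma~\ref{lemtemp2} is not the right tool: it bounds the number of $\SL_3(\Z)$-classes of strongly irreducible ternary cubic \emph{forms} whose Jacobians have rational $3$-torsion, not the number of such elliptic \emph{curves}. A curve with trivial $3$-Selmer group — exactly the curves you care about — contributes no strongly irreducible forms whatsoever, so Lemma~\ref{lemtemp2} is silent about whether its mod-$3$ representation is reducible. There is no per-curve lower bound on the number of forms counted in that lemma, so one cannot convert it into a density statement about curves.

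The paper handles condition (d) by citing Duke's theorem \cite{Du}: $100\%$ of all elliptic curves, ordered by height, have irreducible (indeed surjective) mod-$3$ Galois representation; since an acceptable family has positive density among all curves, $100\%$ of the curves in $F$ satisfy (d), and intersecting with the $25\%$ from Theorem~\ref{pralrank} finishes the proof. Your stated fallback — a direct height count showing that curves with a rational $3$-isogeny (the $j$-line image of $X_0(3)$, together with twists) are sparse — is the correct kind of statement and would also suffice, but it is exactly the content one outsources to Duke rather than to Lemma~\ref{lemtemp2}. With that substitution your bookkeeping in the final paragraph, and the construction of the explicit family for Theorem~\ref{anrank0}, goes through as in the paper.
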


\begin{proof}
It is easy to see (e.g., by Hilbert irreducibility) that a density of 
$100\%$ of elliptic curves~$E$, when ordered by height,
  have the property that the action of $G_\Q$ on $E[3]$ is
  irreducible. (In fact, 
it has been shown by Duke~\cite[Theorem 1]{Du} 
  that $100\%$ of all elliptic curves~$E$, when ordered by height,
  have the property that the action of $G_\Q$ on $E[p]$ is
  irreducible for all primes $p$.)
 As $F$ is a large family, it contains a positive proportion of all
  elliptic curves, and so $100\%$ of the curves in $F$
  satisfy condition (c) of Theorem~\ref{thSU}.  As $5\|\Disc(E)$ for
  $E\in F$, we see that $E$ has multiplicative reduction at $5$ which
  implies that $5\|{\rm Cond}(E)$. Furthermore, since $3\nmid v_5({\rm
    Cond}(E))$, \cite[Proposition 2.12]{DDT} implies that condition (d)
  of Theorem~\ref{thSU} is satisfied by $E$. The proof of
  Theorem~\ref{pralrank} now implies that at least $25\%$ of the curves in $F$
  satisfy all four conditions of Theorem~\ref{thSU}, and so 
Theorem~\ref{pranrank0} follows.
\end{proof}

As in \S4.1, we may construct an explicit union $F$ of positive proportion
large families of elliptic curves satisfying the hypotheses of
Theorem~\ref{pranrank0}.  Indeed, let $F$ denote the family of all
elliptic curves $E$ satisfying the following conditions:
\begin{itemize}
\item The curve $E$ and its twist $E_{-1}$ both have additive reduction
  at $2$, and furthermore the $j$-invariant of both curves $E$ and $E_{-1}$ are
  $2$-adic units.
\item $E$ has square-free discriminant away from $2$, and $5\|\Disc(E)$.
\item $E$ has good ordinary reduction at $3$.
\item $\Delta'(E)\equiv 1\pmod4$, where
  $\Delta'(E):=|\Delta(E)/2^{v_2(\Delta(E))}|$ is the positive odd
  part of the discriminant of $E$.
\end{itemize}
Then, just as in \S4.1, we see that $50\%$ of the curves in $F$ have
root number $+1$.  Thus, by Theorem~\ref{pranrank0}, a positive
proportion of these and thus all elliptic curves, when ordered by
height, have both algebraic and analytic rank 0; we have proven
Theorem~\ref{anrank0} and Corollary~\ref{bsdcor}.

\subsection*{Acknowledgments}

We are very grateful to John Cremona, Johan de Jong, Tom Fisher, Wei
Ho, Bjorn Poonen, Shrenik Shah,
Christopher Skinner, Michael Stoll, Damiano Testa, Eric Urban, and Jerry Wang 
for helpful conversations.  The first author was partially supported
by NSF Grant~DMS-1001828.

\end{document}